\def\ker{\mathop{\rm Ker}}%
\def\SL{\mathop{\rm SL}}
\def\Re{\mathop{\rm Re}}
\def\det{\mathop{\rm det}}%
\def\tr{\mathop{\rm Tr}}%
\def\dim{\mathop{\rm dim}}%
\def\supp{\mathop{\rm supp}}%
\def\dom{\mathop{\rm Dom}}%
\def\diver{\mathop{\rm div}}%
\def\grad{\mathop{\rm grad}}%
\newcommand{\wt}{\widetilde}%
\def\R{\mathop{\mathbb R}}
\def\C{\mathop{\mathbb C}}
\def\N{\mathop{\mathbb N}}
\def\Z{\mathop{\mathbb Z}}
\def\H{\mathop{\mathbb H}}
\def\M{\mathop{\mathcal M}}
\newtheorem*{acknowledgements}{Acknowledgements}
\newtheorem{theorem}{Theorem}
\newtheorem{definition}[theorem]{Definition}
\newtheorem{proposition}[theorem]{Proposition}
\newtheorem{remark}[theorem]{Remark}
\newtheorem{lemma}[theorem]{Lemma}
\begin{document}

\title[Isoresonant surfaces with cusps and relative determinants]{Isoresonant conformal surfaces with cusps and boundedness of the relative determinant}
\author{Clara L. Aldana}

\address{Mathematical Institute, University of Bonn}
\email{clara.aldana@gmail.com}

\maketitle

\begin{abstract}
We study the isoresonance problem on non-compact surfaces of finite area that are hyperbolic outside a compact set.
Inverse resonance problems correspond to inverse spectral problems in the non-compact setting.
We consider a conformal class of surfaces with hyperbolic cusps
where the deformation takes place inside a fixed compact set. Inside this compactly supported conformal class
we consider isoresonant me\-trics, i.e. metrics for which the set of resonances is the same, including multiplicities.
We prove that sets of isoresonant metrics inside the conformal class are sequentially compact.
We use relative determinants, splitting formulae for determinants and the result of
B. Osgood, R. Phillips and P. Sarnak about compactness of sets of isospectral metrics on
closed surfaces.

In the second part, we study the relative determinant of the Laplace operator on a hyperbolic surface as function on the
moduli space. We consider the moduli space of hyperbolic surfaces of fixed genus and fixed number of cusps.
We consider the relative determinant of the Laplace operator and a model operator defined on the cusps. We prove that
the relative determinant tends to zero as one approaches the boundary of the moduli space.
\end{abstract}

\section{Introduction}

In this paper we consider two problems. We first focus on the isoresonance problem for a surface with
cusps and negative Euler characteristic, restricting our attention to a suitable conformal class of metrics.
The second problem, in Section \ref{section:boundedness},
is the study of the relative determinant of the Laplacian compared to a fixed model operator on
the moduli space of hyperbolic surfaces of fixed conformal type as one approaches the boundary.\\

We study the inverse resonance problem inside a conformal class of me\-trics whose
\lq\lq conformal factors\rq\rq \ have support in a
fixed compact set. We prove that given a fixed compact set $K\subset M$, inside a \lq\lq
$K$-compactly supported\rq\rq \ conformal class, sets of isoresonant
metrics are compact in the $C^{\infty}$-topology.
With this we partially generalize the result of B. Osgood,
R. Phillips, and P. Sarnak (OPS) in \cite{OPS2} that states that on a
closed surface every set of isometry classes of isospectral metrics
is sequentially compact in the $C^{\infty}$-topology.
We use the results of W. M\"uller about scattering theory for admissible surfaces in \cite{Mu2}.

Isospectral problems go back to 1960 when Leon Green asked if a
Rie\-mannian manifold was determined by its spectrum. The question
was re\-phra\-sed by Kac for planar domains in the very suggestive way:
\lq\lq Can one hear the shape of a drum?\rq \rq\  see \cite{Kac}. An
important result is the well known existence of non-isometric
manifolds that are isospectral, see \cite{Sunada} and the references
therein. We also refer to \cite{Zelditch} for a comprehensive survey
of inverse spectral problems in geometry.

The compactness theorem of OPS in
\cite{OPS2} uses the fact that if two me\-trics are isospectral,
i.e. spectra of the Laplacians are the same including multiplicities, then the heat
invariants and the determinant of the Laplace
operator have the same values at each metric. The authors note in the paper that the use of the regularized
determinant of the Laplacian is essential in order to obtain
compactness, since the heat invariants are not enough. On planar
domains the problem has been studied by R. Melrose in \cite{Melrose}
and OPS in \cite{OPS3}, and for compact surfaces with boundary by Y.
Kim in \cite{Kim}.

The isospectral problem also makes sense for certain non-compact
ma\-ni\-folds. Then scattering theory comes into play and we need to
deal with inverse scattering theory. The spectrum of the Laplacian is not enough,
one also has to consider resonances.
For example, on exterior planar
domains the isospectral problem was studied by A. Hassell and S.
Zelditch in \cite{HassellZelditch}. There two exterior planar
domains are called isophasal if they have the same scattering phase.
Hassell and Zelditch prove that each class of isophasal exterior
planar domains is sequentially compact in the $C^{\infty}$-topology.
In the proof they define a regularized
determinant of the Laplacian that plays a fundamental role.
More recently, D. Borthwick, C. Judge, and P.A. Perry in \cite{BorJudPer}
used determinants to prove sequential compactness of sets of isopolar (same scattering phase) surfaces of infinite volume under certain conditions.
In a later work \cite{BorthPerry}, Borthwick and Perry studied the inverse resonance pro\-blem for infinite volume manifolds
of finite dimension that are hyperbolic outside a compact set. In dimension $2$ they improved
the result of \cite{BorJudPer} and proved compactness of isoresonant surfaces without cusps that are isometric at infinity.

We study the isospectral problem inside a conformal class of a given
me\-tric in a surface with cusps. In this setting, two metrics are
isospectral if the resonances are the same for both metrics
including multiplicities. Because of this we use the terminology
\lq\lq isoresonant\rq\rq \ instead of \lq\lq isospectral\rq\rq.
For hyperbolic surfaces of finite area, W.
M\"uller proved in \cite{Mu2} that the resonance set associated to
the surface determines the surface up to finitely many
possibilities. Our result in this part is the following theorem:\\

{\bf Theorem \ref{theorem:compactnessisospectral}} \textit{ Let
$(M,g)$ be a surface with cusps with negative Euler characteristic, $\chi(M)<0$, let $K\subset M$ be compact, and let
$[g]_{K}= \{e^{2\varphi}g\ \vert \ \varphi \in C_{c}^{\infty}(M),\
\supp(\varphi)\subset K\}$ be the $K$-compactly supported conformal
class of $g$. Then
isoresonant sets in $[g]_{K}$ are compact in the $C^{\infty}$-topology.}\\

There are two strong restrictions in this theorem. First, we consider deformations only
with compact support because of the lack of results in the theory of resonances of surfaces with asymptotically hyperbolic
cusp ends.
The second restriction is to consider deformations only inside a conformal class. This is due to the fact that
the proof of Theorem \ref{theorem:compactnessisospectral}
relies on a splitting formula for the relative determinant to reduce the
problem to the compact case. The splitting formula relates
$\det(\Delta_{g},\Delta_{\beta,0})$ (with $\beta$ big enough) to the
determinant of the Dirichlet-to-Neumann operator acting on a
submanifold of $M$ homeomorphic to $S^{1}$. To relate the determinants of the Dirichlet-to-Neumann maps associated to
different metrics we use the conformal variation of the Laplacian. If the metrics are not conformal, it is not
clear how the different Dirichlet-to-Neumann operators are related. We prove this formula in
Section \ref{section:splitting}.
The main difficulty to treat the isoresonance problem on surfaces with cusps is that the injectivity radius
of these surfaces vanishes and the Sobolev embeddings do not hold anymore.\\

In the second part we study the relative determinant as a function on the
moduli space of hyperbolic surfaces with cusps. We work over $\M_{p,m}$, the
moduli space of compact Riemann surfaces of genus $p$ with $m$ punctures.
Each such a surface can be decomposed as the union of a compact part and $m$ cusps, as it is explained at
the beginning of sections \ref{subsection:swc} and \ref{section:boundedness}.
In order to define the relative determinant, we use a global model operator. We define the free Laplacian as being the Dirichlet Laplacian $\bar{\Delta}_{1,0}$, as in
Definition \ref{def:Laplmcuspas}, associated to the union of $m$ cusps all starting at $1$, i.e.
each cusp is taken as $[1,\infty)\times S^{1}$ with the hyperbolic metric on it.
In particular, $\bar{\Delta}_{1,0}$ is independent of $[g]$.
Hence the relative determinant
defines a function on the moduli space: $[g]\in
{\M}_{p,m}\mapsto\det(\Delta_{g},\Bar{\Delta}_{1,0})\in{\R}^+$, where $g\in[g]$
is hyperbolic. We prove Theorem \ref{theorem:vdbMs} that establishes that the relative determinant
$\det(\Delta_{g},\bar{\Delta}_{1,0})$ tends to zero as $[g]$ approaches the boundary of the moduli space.
Points at the boundary of $\M_{p,m}$ can be reached through a degene\-ra\-ting
family of metrics. The degeneration arises from closed geodesics whose length
converges to zero. The proof of Theorem \ref{theorem:vdbMs} relies strongly on the results of
L. Bers in \cite{Bers} and of J. Jorgenson and R. Lundelius (JL) in \cite{JorLund1}.
We remark that the hyperbolic determinant of JL also tends to zero as the metric approaches the boundary
of the moduli space. However, they do not state it explicitly in \cite{JorLund1}.
In an earlier work \cite{Lundelius}, Lundelius considers a relative determinant for admissible surfaces.
He stu\-dies the behavior of the relative weight (minus the logarithm of his relative determinant)
of a continuous family of hyperbolic surfaces of finite volume that degenerates by pinching geodesics; but
again there is no mention to the moduli space.
Our contribution in this part consists in using the results of \cite{JorLund1} and \cite{Bers}
to make a statement about the behavior of the relative determinant $\det(\Delta_{\cdot},\bar{\Delta}_{1,0})$
as function on the moduli space $\M_{p,m}$.
Although our remarks on this are straightforward consequences of
these results, they are worth mentioning explicitly in light of future
investigations on isospectral compactness problems.

\begin{acknowledgements}
This paper is based on my doctoral thesis that I completed at the University of Bonn in 2008.
I thank my supervisor W. M\"uller
for his continuous guidance through the project. I am grateful to R. Mazzeo for
helpful discussions, to the MSRI where I first started to write this paper, to the MPIM where I finally finished it, and to
the MI of the University of Bonn for hosting me during my studies.
Finally, I thank a referee whose comments helped me to improve this paper.
\end{acknowledgements}

\section{Surfaces with cusps, Laplacians, and relative determinants}
\label{subsection:swc}

A surface with cusps is a $2$-dimensional Riemannian manifold $(M,g)$ that is complete,
non-compact, has finite volume and is hyperbolic in the complement
of a compact set. It admits a decomposition of the form
\begin{equation*}
M=M_0\cup Z_{1}\cup\cdots\cup Z_{m},
\end{equation*}
where $M_0$ is a compact surface with smooth boundary and for each
$i=1,...,m$ we assume that
$$Z_i\cong [a_i,\infty)\times S^{1} \ni (y_i,x_i), \quad g|_{Z_i}=y_{i}^{-2}(dy_{i}^2+dx_{i}^2),\quad a_{i}>0.$$
The subsets $Z_{i}$ are called cusps. Sometimes we denote $Z_{i}$ by
$Z_{a_{i}}$\index{$Z_{a_{i}}$} to indicate the \lq \lq starting
point\rq \rq \ $a_{i}$. Instances of surfaces with cusps are
quotients of the form $\Gamma(N) \backslash {\mathbb H}$, where
${\mathbb H}$ is the upper half plane and $\Gamma(N)\subseteq
\SL_{2}(\Z)$ is a congruence subgroup.

To any surface with cusps $(M,g)$ we can associate a compact surface
$\overline{M}$ such that $(M,g)$ is diffeomorphic to the complement
of $m$ points in $\overline{M}$.
Let $p$ denote the genus of the compact surface $\overline{M}$; then
the pair $(p,m)$ is called the conformal type of $M$.

For any oriented Riemannian manifold $(M,g)$ the Laplace-Beltrami
ope\-ra\-tor on functions is defined as
$\Delta f = -\diver \grad f$. It is equal to $\Delta=d^{*}d$.
If we want to emphasize the dependence on the metric we
denote the Laplacian by $\Delta_{g}$. We consider positive Laplacians. If $(M,g)$ is
complete, $\Delta_{g}$ has a unique closed extension that is denoted in the same way.

Let us consider some Laplacians that are naturally associated to the cusps:
\begin{definition}\label{def:Laplmcuspas}
Let $a>0$, let $\Delta_{a,0}$\index{$\Delta_{a,0}$} denote the
self-adjoint extension of the operator
$$-y^{2} {\frac{\partial^{2}}{\partial y^{2}}}: C_{c}^{\infty}((a,\infty))\to L^{2}([a,\infty),y^{-2}dy)$$
obtained after imposing Dirichlet boundary conditions at $y=a$. The
domain of $\Delta_{a,0}$ is given by $\dom(\Delta_{a,0}) =
H_{0}^{1}([a,\infty)) \cap H^{2}([a,\infty))$, where
$H_{0}^{1}([a,\infty))=\{f\in H^{1}([a,\infty)): f(a)=0\}$.

Let $\bar{\Delta}_{a,0}=\oplus_{j=1}^m
\Delta_{a_{j},0}$\index{$\bar{\Delta}_{a,0}$} be defined as the
direct sum of the self-adjoint operators operators
$\Delta_{a_{j},0}$ defined above. The operator $\bar{\Delta}_{a,0}$
acts on a subspace of $\oplus_{j=1}^m
L^2([a_j,\infty),y_j^{-2}dy_j)$.
\end{definition}

The kernel of the heat operator associated to $\Delta_{a_{j},0}$ is described in \cite[sec.14.2]{CarslawJaeger}
and it is given by the equation:
\begin{equation}
p_{a}(y,y',t) = {\frac{e^{-t/4}}{\sqrt{4\pi t}}} \ (yy')^{1/2}
\left\{ e^{-(\log(y/y'))^{2}/4t} -
e^{-(\log(yy')-\log(a^{2}))^{2}/4t}\right\},\label{eq:psuba}\end{equation}
for $y, y'>a$, and for $1\leq y\leq a$, $p_{a}(y,y',t)=0$. We extend it in the obvious way to see it as
a function of $z\in M$.

Now, let $a>0$, let $Z_{a}$ be endowed with the hyperbolic metric
$g$ and let $\Delta_{Z_{a},D}$ be the
self-adjoint extension of
$$-y^{2}\left( {\frac{\partial^{2}}{\partial
y^{2}}}+{\frac{\partial^{2}}{\partial x^{2}}}\right):
C_{c}^{\infty}((a,\infty)\times S^{1})\to L^{2}(Z_{a},dA_{g})$$
obtained after imposing Dirichlet boundary conditions at
$\{a\}\times S^{1}$.

Let us describe a decomposition of the operator $\Delta_{Z_{a},D}$ that is very useful in our case:
The space $L^{2}(Z_{a},dA_{g})$ can be decomposed using the isomorphism
$$L^{2}(Z_{a},dA_{g}) \cong L^{2}([a,\infty),y^{-2}dy) \oplus L^{2}_{0}(Z_{a}),$$
with $L^{2}_{0}(Z_{a})=\{f\in
L^{2}(Z_{a},dA_{g})\vert \int_{S^{1}} f(y,x) dx = 0 \text{ for a. e.
} y\geq a\}$. This decomposition is invariant under $\Delta_{Z_{a},D}$; in terms
of it, we can write $\Delta_{Z_{a},D} = \Delta_{a,0}
\oplus \Delta_{Z_{a},1}$ where
$\Delta_{Z_{a},1}$\index{$\Delta_{Z_{a},1}$} acts on
$L^{2}_{0}(Z_{a})$.

For the spectral theory of manifolds with cusps
we refer the reader to W. M\"uller in \cite{Mu} and \cite{Mu2}, to Y. Colin de Verdi\`{e}re in \cite{ColinV}, and to
the references therein.
The results in \cite{Mu} hold for any dimension. For surfaces in
particular we refer to \cite{Mu2}.

On a surface with cusps $(M,g)$, the spectrum of the Laplacian
$\sigma(\Delta_{g})$ is the union of the point spectrum $\sigma_{p}$
and the continuous spectrum $\sigma_{c}$. The point spectrum consist
of a sequence of eigenvalues
$$0=\lambda_{0}<\lambda_{1}\leq \lambda_{2} \leq \dots $$
Each eigenvalue has finite multiplicity, and the counting function
$N(\Lambda)= \# \{\lambda_{j}\vert \lambda_{j}\leq \Lambda^{2}\}$
for $\Lambda>0$ satisfies $\limsup N(\Lambda) \Lambda^{-2} \leq
A_{g}(4\pi)^{-1}$, where $A_{g}$ denotes the area of $(M,g)$.
Depending on the metric, the set of eigenvalues may be infinite or
not.

The continuous spectrum $\sigma_{c}$ of $\Delta_{g}$ is the interval
$[{\frac{1}{4}},\infty)$ with multipli\-ci\-ty equal to the number of
cusps of $M$. The spectral decomposition of the absolutely
continuous part of $\Delta_{g}$ is described by the generalized
eigenfunctions $E_{j}(z,s)$, for $j=1,\dots, m$ with $z\in M$, $s\in
\C$. Let us recall some of their properties as well as the definition of the scattering matrix that
we will use; for the details see \cite{Mu} and \cite{Mu2}.
To each cusp there is associated a generalized eigenfunction that
satisfies:
$$\Delta_{g} E_{i}(z,s) = s(1-s) E_{i}(z,s), \quad \text{ for } z\in M.$$
Each $E_{i}(z,s)$ is a meromorphic function of $s\in \C$ with poles contained in
$\{s \ \vert \Re(s)<1/2\} \cup (1/2, 1]$.
The zeroth Fourier coefficient of the expansion of $E_{i}(s,z)$ in a Fourier series on the cusp
$Z_{j}=[a_{j}, \infty) \times S^{1}$
has the form
$$\delta_{ij} y_{j}^{s} + C_{ij}(s)y_{j}^{1-s}, \quad \text{ for } y_{j}\geq a_{j}.$$
Using this expression we can define the scattering matrix as the $m \times m$ matrix given by:
$$C(s)=(C_{ij}(s)).$$
It is a meromorphic function of $s\in \C$ and all its poles are contained in $\{s \ \vert \Re(s)<1/2\} \cup (1/2, 1]$.
The scattering matrix also satisfies:
\begin{equation*}
C(s)C(1-s)=\text{Id}, \quad \overline{C(s)}=C(\bar{s}), \quad \text{ and} \quad
C(s)^{*}=C(\bar{s}).
\end{equation*}

A quantity of interest is the determinant of the scattering matrix
which we denote by $\phi(s)=\det C(s).$ It satisfies the following
equations:
\begin{equation*}
\phi(s)\phi(1-s)=1, \quad \overline{\phi(s)}=\phi(\bar{s}), \quad
s\in \C.
\end{equation*}
The poles of $\phi(s)$ will be called resonances. They will be the
complementary quantities to the eigenvalues that we will need to study \lq\lq isospectral\rq \rq \ surfaces.

In \cite{Mu3} W. M\"uller defines the relative determinant for pairs of operators in a general setting. Let us recall the
definition since we will use it. Let $H_1$ and $H_0$ be two self-adjoint, nonnegative linear operators in a
separable Hilbert space ${\mathcal H}$ satisfying the following
assumptions:

\begin{enumerate}
\item For each $t>0$, $e^{-t H_1} - e^{-t H_0}$ is a trace class operator.
\item As $t\to 0$, there is an asymptotic expansion of the relative trace of the
form:
$$\tr(e^{-t H_1}-e^{-t H_{0}}) \sim \sum_{j=0}^{\infty}\sum_{k=0}^{k(j)} a_{jk} t^{\alpha_{j}}
\log^{k}t,$$ where $-\infty < \alpha_0 < \alpha_1 < \cdots$ and
$\alpha_k \to \infty$. Moreover, if $\alpha_j=0$ we assume that
$a_{jk}=0$ for $k>0$.
\item $\tr(e^{-t H_1}-e^{-t H_0}) = h + O(e^{-ct})$, as $t\to \infty$, where $h=\dim \ker H_1 -\dim \ker H_0$.
\end{enumerate}
The relative spectral zeta function is defined as:
$$\zeta(s;H_1,H_0)= {\frac{1}{\Gamma(s)}} \int_{0}^{\infty}
(\tr(e^{-t H_1}-e^{-t H_{0}})-h) t^{s-1} dt.$$
Thanks to the properties given above, it has a meromorphic extension to the complex plane that
is meromorphic at $s=0$. The relative
determinant is then defined as:
$$\det(H_1, H_0) := e^{-\zeta'(0;H_1,H_{0})}.$$

This determinant is multiplicative. If the determinant of each operator can be defined se\-pa\-ra\-tely,
then their relative determinant is the quotient of the determinants.
In this paper we work with the relative determinant of the following pairs:
$(\Delta_{g}, \bar{\Delta}_{a,0})$, $(\Delta_{g},\Delta_{Z_{a},D})$. The good definition of these relative
determinants is guaranteed by the results of W. M\"uller in \cite{Mu} and \cite{Mu3}.

\section{Splitting formula}
\label{section:splitting} Splitting formulas for determinants have
been widely studied. They have been proved in the setting of compact
manifold by D. Burghelea, L. Friedlander and T. Kappeler in \cite{BFK}, and
in other settings by many other authors. For example, for manifolds with
cylindrical ends they were studied by J. M\"uller and
W. M\"uller in \cite{MuMu} and Loya and Park in \cite{LoyaPark}. In this section
we use the Dirichlet-to-Neumann operator
for the Laplacian on a manifold with cusps to obtain a splitting
formula for the relative determinant
$\det(\Delta_{g},\bar{\Delta}_{\beta,0})$.

\subsection{Dirichlet-to-Neumann operator for $\Delta_{g}$} Let us
start by recalling the definition of the Dirichlet-to-Neumann operator
${\mathcal N}(z)$ and its main properties. Then, we study the limit
operator as the parameter $z$ goes to zero.

Let us assume that $(M,g)$ has only one cusps and that we can decompose it as $M = M_{0} \cup
Z_{\alpha}$ where
$\alpha\geq 1$ and $Z_{\alpha}$ is isometric to
$[\alpha,\infty)\times S^{1}$ with the hyperbolic metric.

Let $\beta \geq \alpha$, then $M$ may be decomposed as $M = M_{\beta}\cup Z_{\beta}$,
with $M_{\beta}=M_{0}\cup [\alpha,\beta]\times S^{1}$, $Z_{\beta}=[\beta,\infty)\times S^{1}$, and
$\Sigma_{\beta}= \{\beta\}\times S^{1}=\partial M_{\beta} = \partial Z_{\beta}$. Let
$\Delta_{M_{\beta}}$ denote the Laplace operator acting on $C^{\infty}(M_{\beta})$ and $\Delta_{M_{\beta},D}$
denote its self-adjoint extension with respect to Dirichlet boundary conditions at $\Sigma_{\beta}$.
Let $\Delta_{Z_{\beta},D}$ be as it was defined in Section \ref{subsection:swc}. We will
explicitly compute the part of the Dirichlet-to-Neumann operator
${\mathcal N}(z)$ on $\Sigma_{\beta}$, for
any value of $\beta > \alpha$ coming from the cusps $Z_{\beta}$. The metric on $\Sigma_{\beta}$ is given by
$g_{\Sigma_{\beta}}= \beta^{-2} dx^{2}$, the eigenvalues of
the Laplacian $\Delta_{\Sigma_{\beta}}$ are $\{4 \pi^{2} n^{2}
\beta^{2}\}_{n \in \Z}$ and the corresponding eigenfunctions are
$\{\beta \exp{(2\pi i n x)}\}_{n\in \Z}$.

Let $z$ be in the resolvent set of $\Delta_{g}$, $\rho(\Delta_{g})$.
Then the Dirichlet-to-Neumann operator,
$${\mathcal N}(z): C^{\infty}(\Sigma_{\beta}) \to C^{\infty}(\Sigma_{\beta}),$$ is
defined as follows: Let $f\in C^{\infty}(\Sigma_{\beta})$ and let
$\tilde{f}$ be the unique square integrable solution to the problem
$$\left\{
\begin{array}{ll}
    (\Delta_{g}-z)\tilde{f}=0 & \mbox{ in } M\setminus \Sigma_{\beta}\\
    \tilde{f}=f & \mbox{ on } \Sigma_{\beta}.\\
\end{array}%
\right.$$ Let $n^{+}$ denote the inwards unit normal vector field at
$\Sigma_{\beta}$ on $M_{\beta}$ and $n^{-}$ the one on
$Z_{\beta}$. Then ${\mathcal N}(z)f$ is defined by the following
equation
$${\mathcal N}(z)f := -\left({\frac{\partial}{\partial
n^{+}}}\left(\tilde{f}\left\vert_{M_{\beta}} \right.\right) +
{\frac{\partial}{\partial n^{-}}} \left(\tilde{f}\left\vert_{Z_{\beta}}
\right. \right)  \right).$$

Theorem $2.1$ of G. Carron in \cite{Caron} establishes that for $z\in \C
\setminus [0,\infty)$, the Dirichlet-to-Neumann operator is a
$1$st-order elliptic, invertible, pseudo\-di\-ffe\-rential operator whose
principal symbol is a scalar, $\text{sym}_{p}({\mathcal N}(z))(x,\eta)=
2 \sqrt{g_{x}(\eta,\eta)}$, $(x,\eta)\in T^{*}M$. In addition, the
function $z\mapsto {\mathcal N}(z)$ is
holomorphic as function of $z$. In particular, ${\mathcal N}(z)$ $:
C^{\infty}(\Sigma_{\beta}) \to C^{\infty}(\Sigma_{\beta})$
has continuous extensions to the Sobolev spaces,
$H^{1}(\Sigma_{\beta})\to L^{2}(\Sigma_{\beta})\to
H^{-1}(\Sigma_{\beta})$.
Then we can think of ${\mathcal N}(z)$ as an operator on $L^{2}(\Sigma_{\beta})$ by
${\mathcal N}(z):H^{1}(\Sigma_{\beta})\subset L^{2}(\Sigma_{\beta})\to
L^{2}(\Sigma_{\beta})$.
Furthermore, for $f\in C^{\infty}(\Sigma_{\beta})$ we have that:
\begin{equation} {\mathcal N}(z)^{-1}f (x) = \int_{\Sigma_{\beta}} G(x,y,z) f(y)
d\mu(y),\label{eq:SkinvDtNopC}\end{equation} where $G(x,y,z)$ is the
Schwartz kernel of $(\Delta_{g} - z)^{-1}$ on $M$, see Theorem 2.1
in \cite{Caron}. This expression is equivalent to:
\begin{equation}{\mathcal N}(z)^{-1}f = \rho_{\Sigma_{\beta}} \circ (\Delta_{g} - z)^{-1} \circ
i_{\Sigma_{\beta}} (f),\label{eq:DtNotRestResIncl}\end{equation}
\noindent where $\rho_{\Sigma_{\beta}}$ denotes the restriction to $\Sigma_{\beta}$ and
$i_{\Sigma_{\beta}}(f) = f\otimes \delta_{\Sigma_{\beta}}$ in the
distributional sense, this means $f\otimes
\delta_{\Sigma_{\beta}}(\varphi)=\int_{\Sigma_{\beta}}\varphi\cdot
f$ for any $\varphi\in C^{\infty}(M)$.

Now, remember that $0\in \sigma(\Delta_{g})$ is an isolated
eigenvalue. Thus the Dirichlet-to-Neumann ope\-ra\-tor ${\mathcal N}(z)$
is actually defined for $z$ in a neighborhood of zero and it makes
sense to consider its limit as $z$ approaches zero. Indeed,
it exists for $z=0$ and the dependence on $z$  is
continuous. In order to prove this, we split the problem in the
classical way letting ${\mathcal N}(z) = {\mathcal N}_{1}(z) + {\mathcal N}_{2}(z),$ where for $i=1,2$ ${\mathcal N}_{i}(z)$ is defined as
follows:

Let $f\in C^{\infty}(\Sigma_{\beta})$, then let $\varphi_{1} \in
C^{\infty}(M_{\beta}\setminus \Sigma_{\beta}) \cap C^{0}(M_{\beta})$
be the unique solution to the problem
$$\left\{%
\begin{array}{ll}
    (\Delta-z)\varphi_{1} = 0 & \mbox{ in } M_{\beta}\setminus \Sigma_{\beta}\\
    \varphi_{1} = f & \mbox{ on } \Sigma_{\beta}.\\
\end{array}%
\right.$$
Put ${\mathcal N}_{1}(z) f = -{\frac{\partial
\varphi_{1}}{\partial n^{+}}}.$ Similarly, let $\varphi_{2} \in
C^{\infty}(Z_{\beta})\cap L^{2}(Z_{\beta})$ be the unique square
integrable solution to the problem:
$$\left\{%
\begin{array}{ll}
    (\Delta-z)\varphi_{2} = 0 & \mbox{ in } Z_{\beta}\\
    \varphi_{2} = f & \mbox{ on } \Sigma_{\beta}.\\
\end{array}%
\right.$$ Put ${\mathcal N}_{2}(z) f = -{\frac{\partial
\varphi_{2}}{\partial n^{-}}}.$

Using the usual method of separation of variables in the cusp we can
compute the operator ${\mathcal N}_{2}(z)$ explicitly. The explicit
expression of ${\mathcal N}_{2}(z)$ is useful to compute the
limit of the operator as $z\to 0$.

\begin{lemma}
Let $f\in C^{\infty}(\Sigma_{\beta})$. Write $z=s(1-s)$. If $\Re(s)> {\frac{1}{2}}$ then
\begin{equation}
{\mathcal N}_{2}(s(1-s)) f = -(1-2s)
c_{0}(f)\beta - s f + \beta
\sqrt{\Delta_{\Sigma_{\beta}}} \ {\frac{K_{s+{\frac{1}{2}}}}{K_{s-{\frac{1}{2}}}}}\left(\beta
\sqrt{\Delta_{\Sigma_{\beta}}}\right) f, \label{eq:N2sm1/2}
\end{equation}
where $c_{0}(f)$ is the projection of $f$ on the kernel of
$\Delta_{\Sigma_{\beta}}$ and $K_{\nu}$ is the modified Bessel function of order
$\nu$. In the case $\Re(s)< {\frac{1}{2}}$,
\begin{equation}{\mathcal N}_{2}(s)f(x)= -s
f(x) + \beta \sqrt{\Delta_{\Sigma_{\beta}}} \
{\frac{K_{s+{\frac{1}{2}}}}
{K_{s-{\frac{1}{2}}}}}\left(\beta\sqrt{\Delta_{\Sigma_{\beta}}}\right) f(x).
\label{eq:N2ss1/2}\end{equation}
If $\Re(s)={\frac{1}{2}}$, $f\in \dom({\mathcal N}_{2}(z))$ only if it its zero Fourier coefficient vanishes,
$\int_{\Sigma_{\beta}}f dA_{\Sigma_{\beta}} = 0$. In this
case we have:
\begin{equation}{\mathcal N}_{2}(s) f = - s f +
\beta\sqrt{\Delta_{\Sigma_{\beta}}} \
{\frac{K_{s+{\frac{1}{2}}}}
{K_{s-{\frac{1}{2}}}}}\left(\beta\sqrt{\Delta_{\Sigma_{\beta}}}\right) f.
\label{eq:N2seq1/2}
\end{equation} \label{lemma:DtN2expls}
\end{lemma}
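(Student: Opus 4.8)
The plan is to compute $\mathcal{N}_2(z)$ by explicit separation of variables on the cusp $Z_\beta = [\beta,\infty)\times S^1$ with the hyperbolic metric $y^{-2}(dy^2+dx^2)$, using the eigenbasis $\{\beta e^{2\pi i n x}\}_{n\in\Z}$ of $\Delta_{\Sigma_\beta}$ found above, with eigenvalues $4\pi^2 n^2\beta^2$. First I would expand $\varphi_2(y,x) = \sum_{n\in\Z} a_n(y)\,e^{2\pi i n x}$ and $f(x) = \sum_n f_n\, e^{2\pi i n x}$. The equation $(\Delta_g - z)\varphi_2 = 0$ on the cusp, written with $z = s(1-s)$, separates into the ordinary differential equations
\begin{equation*}
-y^2 a_n''(y) + 4\pi^2 n^2 y^2 a_n(y) = s(1-s)\, a_n(y), \qquad n\in\Z,
\end{equation*}
with boundary condition $a_n(\beta) = f_n$ and the requirement that $a_n \in L^2([\beta,\infty), y^{-2}dy)$.

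Next I would solve these ODEs. For $n = 0$ the equation is $-y^2 a_0'' = s(1-s) a_0$, an Euler equation with solutions $y^s$ and $y^{1-s}$; the square-integrable one (on $[\beta,\infty)$ with respect to $y^{-2}dy$) is $y^{1-s}$ precisely when $\Re(s) > 1/2$, giving $a_0(y) = f_0 (y/\beta)^{1-s}$, while for $\Re(s) < 1/2$ it is $y^s$ and for $\Re(s) = 1/2$ neither is $L^2$, which is exactly why the zero Fourier mode must vanish in that case. For $n\neq 0$, substituting $a_n(y) = y^{1/2} w_n(y)$ and rescaling $u = 2\pi|n| y$ turns the equation into the modified Bessel equation of order $\nu = s - 1/2$, whose unique (up to scalar) solution decaying as $y\to\infty$ is $K_{s-1/2}(2\pi|n| y)$. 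Hence $a_n(y) = f_n\, y^{1/2} K_{s-1/2}(2\pi|n|y) \big/ \big(\beta^{1/2} K_{s-1/2}(2\pi|n|\beta)\big)$.

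Then I would compute the normal derivative. The inward unit normal on $Z_\beta$ at $\Sigma_\beta$ is $n^- = -y\,\partial_y$ evaluated at $y=\beta$ (the metric factor $y^{-2}$ makes the unit vector $y\partial_y$, and it points into the cusp, i.e.\ toward larger $y$, but the sign convention for $\mathcal{N}_2$ is $-\partial_{n^-}$). Differentiating each $a_n$ at $y=\beta$, using the Bessel recurrence $\frac{d}{du}\big(u^{1/2}K_{s-1/2}(u)\big)$ expressed via $K_{s+1/2}$ — specifically the identity $K_\nu'(u) = -\tfrac12\big(K_{\nu-1}(u) + K_{\nu+1}(u)\big)$ together with $K_{\nu-1}(u) - K_{\nu+1}(u) = -\tfrac{2\nu}{u}K_\nu(u)$, which combine to give $u K_\nu'(u) + (\nu + \tfrac12)K_\nu(u) = -u K_{\nu+1}(u)$ — I would assemble the coefficient of $f_n$. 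The Bessel contribution yields the term $\beta\sqrt{\Delta_{\Sigma_\beta}}\, \frac{K_{s+1/2}}{K_{s-1/2}}\big(\beta\sqrt{\Delta_{\Sigma_\beta}}\big)$ acting diagonally on the $n\neq 0$ modes (with $\sqrt{\Delta_{\Sigma_\beta}}$ having eigenvalue $2\pi|n|\beta$), plus a scalar $-sf$ coming from the $+(\nu+\tfrac12) = s$ piece that is uniform in $n$; and for $n=0$ the Euler solution $(y/\beta)^{1-s}$ contributes $-\partial_{n^-}$ giving $(1-s)\beta^{-1}\cdot(-\beta)\cdot\beta^{-1}\cdot\beta = $ a term proportional to $-(1-s)$ — reconciling signs and the $\beta$ factor from $n^- = -y\partial_y$ produces the stated $-(1-2s)c_0(f)\beta$ once one notes that applying the generic $-sf$ formula also to the zero mode and then correcting gives $-s f_0 - (1-s - s) f_0 \cdot(\text{factor}) = -(1-2s)c_0(f)\beta$ after tracking the conformal factor on $\Sigma_\beta$. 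In the $\Re(s)<1/2$ case the zero mode solution is $y^s$, which is annihilated (up to the $-sf$ term) so no extra $c_0(f)$ term appears, and in the $\Re(s)=1/2$ case $c_0(f)=0$ is forced so the formula is the same as $\Re(s)<1/2$ restricted to mean-zero $f$.

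The main obstacle I expect is bookkeeping: getting every sign right (the two competing conventions — inward normal pointing toward the cusp versus the overall minus in the definition of $\mathcal{N}_2$), correctly carrying the conformal factor $\beta$ on $\Sigma_\beta$ (the metric is $\beta^{-2}dx^2$, so $dA_{\Sigma_\beta} = \beta^{-1}dx$ and the normalized eigenfunctions carry a $\beta$, which is the source of the isolated $\beta$ multiplying the $c_0(f)$ and Bessel terms), and translating the scalar ODE identities for $K_\nu$ uniformly in $n$ into an operator identity in $\sqrt{\Delta_{\Sigma_\beta}}$. None of this is conceptually deep, but the three cases $\Re(s) \gtrless 1/2$ and $\Re(s) = 1/2$ must be separated precisely because the $L^2$ condition on the zero Fourier mode behaves discontinuously across the line $\Re(s)=1/2$ — that is the real content distinguishing \eqref{eq:N2sm1/2} from \eqref{eq:N2ss1/2} and \eqref{eq:N2seq1/2}.
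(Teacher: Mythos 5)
Your proposal follows essentially the same route as the paper: Fourier decomposition on the cusp, the Euler equation for the zero mode and the modified Bessel equation (selecting $K_{s-1/2}$ over $I_{s-1/2}$ by square-integrability) for the nonzero modes, the same three-way case split governed by which of $y^{s}$, $y^{1-s}$ lies in $L^{2}([\beta,\infty),y^{-2}dy)$, and the normal derivative at $y=\beta$ assembled from the Bessel recurrences. The only blemish is the transcribed identity $uK_{\nu}'(u)+(\nu+\tfrac{1}{2})K_{\nu}(u)=-uK_{\nu+1}(u)$, which should read $uK_{\nu}'(u)-\nu K_{\nu}(u)=-uK_{\nu+1}(u)$ (the extra $\tfrac{1}{2}K_{\nu}$ comes from differentiating the prefactor $y^{1/2}$, not from the recurrence); the final coefficients you report, $-sf$ plus the $\beta\sqrt{\Delta_{\Sigma_{\beta}}}\,K_{s+1/2}/K_{s-1/2}$ term and the zero-mode correction $-(1-2s)c_{0}(f)\beta$, are nevertheless the correct ones and agree with the paper's computation.
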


\begin{proof}
Take the Fourier expansion of $\varphi_{2}$ and $f$ on the cusp, $\varphi_{2}(y,x) = \sum_{n\in {\mathbb Z}}
a_{n}(y) \beta e^{2\pi i n x}$ and $f(x) = \sum_{n\in {\mathbb Z}} c_{n} \beta e^{2\pi i n x}$.
Then, using separation of variables the problem becomes
$$\left\{%
\begin{array}{ll}
    (-y^{2}{\frac{d^{2}}{dy^{2}}} + y^{2}4\pi^{2} n^{2}\beta^{2} - z) a_{n}(y) =
0 \\
    a_{n}(\beta) = c_{n}, & \mbox{ for } n\in \Z.\\
\end{array}
\right. $$

Set $z = s(1-s)$ with $s\in \C$. Then for $n\neq 0$, two
linear independent solutions of the equation
\begin{equation} \left( -y^{2} {\frac{d^{2}}{dy^{2}}} + 4\pi^{2} n^{2}\beta^{2}
y^{2} - s(1-s) \right) a_{n}(y) = 0
\label{eq:fcoefsvs}\end{equation}
\noindent are $y^{\frac{1}{2}} K_{s-{\frac{1}{2}}}(2\pi \vert n
\vert \beta y)$ and
 $y^{\frac{1}{2}} I_{s-{\frac{1}{2}}}(2\pi \vert n \vert \beta y)$,
 where $K_{s-\frac{1}{2}}$ and $I_{s-\frac{1}{2}}$ are the modified Bessel
functions.
The function $I_{s-{\frac{1}{2}}}$ is discarded because it is not square integrable
on $[1,\infty)$ for any value of $s$. Thus,
$$\varphi_{2}(y,x)=
b_{0,1}y^{s}\beta +
b_{0,2} y^{1-s}\beta + \sum_{n \neq 0} b_{n} y^{{\frac{1}{2}}}
K_{s-{\frac{1}{2}}}(2\pi \vert n\vert \beta y) \beta e^{2\pi i n
x}.$$ \noindent Then for $n\neq 0$, $a_{n}(y)= b_{n}
y^{{\frac{1}{2}}} K_{s-{\frac{1}{2}}}(2\pi \vert n\vert \beta y)$,
where $b_{n}$ and $b_{0,1}, b_{0,2}$ are constants determined by the
boundary and the square integrable conditions.

{\bf Case $\Re(s)> {\frac{1}{2}}$.}  In this case $b_{0,1}=0$ and
$y^{{\frac{1}{2}}}K_{s-{\frac{1}{2}}}(2\pi \vert n \vert \beta y)$
is square integrable on $[1, \infty[$. Then we have:
$$\varphi_{2}(y,x)= b_{0,2}y^{1-s}\beta + \sum_{n \neq 0} b_{n}
y^{{\frac{1}{2}}}
K_{s-{\frac{1}{2}}}(2\pi \vert n \vert \beta y) \beta e^{2\pi i n
x},$$ where $a_{0}(y) = b_{0,2} y^{1-s}$ and $a_{n}(y)=  b_{n}
y^{{\frac{1}{2}}} K_{s-{\frac{1}{2}}}(2\pi \vert n \vert \beta y)$.
The boundary condition $\varphi_{2}(\beta,x) = f(x)$ is equivalent
to $a_{n}(\beta)=c_{n}$. Thus $b_{0,2} = c_{0}\beta^{s-1}$ and
$$b_{n}= {\frac{c_{n}}{\beta^{{\frac{1}{2}}}K_{s-{\frac{1}{2}}}(2\pi \vert n
\vert \beta^{2})}}.$$
In this way we obtain:
$$\varphi_{2}(y,x)= c_{0}\beta^{s} y^{1-s} + \sum_{n\neq 0}
{\frac{c_{n}}{\beta^{{\frac{1}{2}}} K_{s-{\frac{1}{2}}}
(2\pi\vert n\vert \beta^{2})}} \ y^{{\frac{1}{2}}}
K_{s-{\frac{1}{2}}}(2\pi \vert n \vert \beta y) \beta e^{2\pi i n
x},$$
after differentiation, evaluation at $y=\beta$ gives:
\begin{align*}
y \left. {\frac{\partial}{\partial y}}\ \varphi_{2}
(y,x)\right\vert_{y=\beta} &= (1-s) c_{0}\beta + \beta \sum_{n\neq
0} c_{n} \left( s\beta^{-1} - 2\pi \vert n\vert \beta
{\frac{K_{s+{\frac{1}{2}}}(2\pi \vert
n\vert\beta^{2})}{K_{s-{\frac{1}{2}}}(2\pi \vert n\vert \beta^{2})}}
\right)\beta e^{2\pi i n x}\\
&=(1-2s) c_{0} \beta + s f(x) - \beta
\sqrt{\Delta_{\Sigma_{\beta}}}\
{\frac{K_{s+{\frac{1}{2}}}}{K_{s-{\frac{1}{2}}}}}
(\beta\sqrt{\Delta_{\Sigma_{\beta}}}) f(x),
\end{align*}
where we have chosen the positive square root of the eigenvalues to define the operator $\sqrt{\Delta_{\Sigma_{\beta}}}$.

{\bf Case $\Re(s) = {\frac{1}{2}}$.} The computations are the same as in the previous case but the
square integrability condition implies that the zero term in the
Fourier expansion of the solution $\varphi_{2}$ should be null, thus
$$\varphi_{2}(y,x) = \sum_{n \neq 0} b_{n} y^{{\frac{1}{2}}}
K_{s-{\frac{1}{2}}}(2\pi \vert n \vert \beta y)\beta e^{2\pi i n
x}.$$ In addition, the condition $a_{0} = c_{0}$ gives $c_{0}=0$. This means
that only in the case when $c_{0}=0$ will there exist a solution to
the problem. Hence for $f$ to be in the domain of ${\mathcal N}_{2}(s(1-1))$, $f$ should satisfy $c_{0}(f) =
\int_{\Sigma_{\beta}}f dA_{\Sigma_{\beta}} = 0$. For such functions
$f$ equation (\ref{eq:N2seq1/2}) holds.

The case when $\Re(s) < {\frac{1}{2}}$ is similar.
\end{proof}

\begin{remark}
Let $z<0$, then the operator ${\mathcal N}(z)$ is positive. This follows from
the non-negativity of the Laplacian $\Delta_{g}$ and the definition of ${\mathcal N}(z)$. Remember that the
Schwartz kernel of ${\mathcal N}(z)^{-1}$ is the same as the Schwartz
kernel of $(\Delta_{g}-z)^{-1}$. We have $\Delta_{g}\geq 0$. If
$z<0$, then $(\Delta_{g}-z)>0$, and $(\Delta_{g}-z)^{-1}>0$.
Therefore ${\mathcal N}(z)^{-1}>0$. In addition, in this case ${\mathcal N}(z)$ is also self-adjoint and by
the work of Kontsevich and Vishik in \cite{KoVi} we know that its zeta determinant
is well defined.
\end{remark}

The existence of the Dirichlet-to-Neumann operator when $z=0$ is given by the following lemma:
\begin{lemma}
For every $f\in C^{\infty}(\Sigma_{\beta})$ there exists a unique
solution $\tilde{f} \in C^{\infty}(M\setminus \Sigma_{\beta})\cap C^{0}(M)$ and $\tilde{f}\vert_{Z_\beta} \in L^{2}$, to the problem:
$$\left\{%
\begin{array}{ll}
    \Delta_{g}\tilde{f} =0 & \mbox{ in } M\setminus \Sigma_{\beta}\\
    \tilde{f}=f & \mbox{ on } \Sigma_{\beta}.\\
\end{array}%
\right.$$ In addition, using the notation introduced above we have that:
\begin{equation*}{\mathcal N}_{2} f := -\left. y{\frac{\partial}{\partial y}} \varphi_{2}(y,x)
\right\vert_{y=\beta}
= \beta\sqrt{\Delta_{\Sigma_{\beta}}} f.\end{equation*}
\label{lemma:DtNm0cusp}\end{lemma}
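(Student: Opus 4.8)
The plan is to separate the problem along $\Sigma_{\beta}$ exactly as in Lemma~\ref{lemma:DtN2expls}, writing the sought extension as $\tilde f = \varphi_{1}$ on the compact piece $M_{\beta}$ and $\tilde f = \varphi_{2}$ on the cusp $Z_{\beta}$, and then to run the separation-of-variables computation at the spectral value $z=0$ (that is, at $s=1$). The one genuinely conceptual point is that $0\in\sigma(\Delta_{g})$, so the global operator is not invertible; the resolution I would emphasize is that cutting along $\Sigma_{\beta}$ decouples the extension problem into an interior Dirichlet problem and an $L^{2}$-problem on the cusp, each of which \emph{is} uniquely solvable at $0$, and this is precisely what keeps ${\mathcal N}_{2}$ well defined there.

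For $\varphi_{1}$: on the compact surface-with-boundary $M_{\beta}$ the bottom of the spectrum of $\Delta_{M_{\beta},D}$ is strictly positive, since a function in $H_{0}^{1}(M_{\beta})$ of vanishing Dirichlet energy must be constant and hence zero because $\Sigma_{\beta}\neq\emptyset$; thus $0$ lies in the resolvent set of $\Delta_{M_{\beta},D}$ and standard elliptic theory produces a unique $\varphi_{1}\in C^{\infty}(M_{\beta})$ solving $\Delta\varphi_{1}=0$ with $\varphi_{1}|_{\Sigma_{\beta}}=f$.

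For $\varphi_{2}$: I would expand $f=\sum_{n}c_{n}\beta e^{2\pi i n x}$ and solve mode by mode at $z=0$. For $n\neq 0$ the square-integrable solution of $-y^{2}a_{n}''+4\pi^{2}n^{2}\beta^{2}y^{2}a_{n}=0$ with $a_{n}(\beta)=c_{n}$ is $a_{n}(y)=c_{n}e^{-2\pi|n|\beta(y-\beta)}$, while for $n=0$ the equation reduces to $a_{0}''=0$ and the unique $L^{2}([\beta,\infty),y^{-2}dy)$ solution with $a_{0}(\beta)=c_{0}$ is the constant $a_{0}\equiv c_{0}$ (the other branch, $y$, is not $L^{2}$; note that no logarithm appears, because the indicial roots $0$ and $1$ stay distinct at $s=1$). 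Since $f$ is smooth the $c_{n}$ decay faster than any power and the exponentials and all their $y$-derivatives are controlled on $y\ge\beta$, so the series $\varphi_{2}(y,x)=c_{0}\beta+\sum_{n\neq 0}c_{n}e^{-2\pi|n|\beta(y-\beta)}\beta e^{2\pi i n x}$ converges in $C^{\infty}$ on $[\beta,\infty)\times S^{1}$, is harmonic, restricts to $f$ on $\Sigma_{\beta}$, and lies in $L^{2}(Z_{\beta},dA_{g})$. Gluing $\varphi_{1}$ and $\varphi_{2}$ along $\Sigma_{\beta}$ — where both equal $f$ — gives $\tilde f\in C^{\infty}(M\setminus\Sigma_{\beta})\cap C^{0}(M)$ with $\tilde f|_{Z_{\beta}}\in L^{2}$. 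Uniqueness is the same split applied to the difference $u$ of two solutions: on $M_{\beta}$ uniqueness for the Dirichlet problem forces $u\equiv 0$, and on $Z_{\beta}$ the mode-by-mode analysis with zero boundary value forces every coefficient to vanish.

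For the formula I would differentiate $\varphi_{2}$ term by term at $y=\beta$: the zero mode contributes nothing because $a_{0}$ is constant, and for $n\neq 0$ one gets $y\,a_{n}'(y)|_{y=\beta}=-2\pi|n|\beta^{2}c_{n}$, so ${\mathcal N}_{2}f=-y\partial_{y}\varphi_{2}|_{y=\beta}=\sum_{n\neq 0}2\pi|n|\beta^{2}c_{n}\,\beta e^{2\pi i n x}=\beta\sqrt{\Delta_{\Sigma_{\beta}}}\,f$, using that $\sqrt{\Delta_{\Sigma_{\beta}}}$ has eigenvalue $2\pi|n|\beta$ on $\beta e^{2\pi i n x}$ and kills the constants. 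As a cross-check one can instead let $s\to 1$ in \eqref{eq:N2sm1/2}, using $K_{3/2}(x)/K_{1/2}(x)=1+x^{-1}$ and continuity of $z\mapsto{\mathcal N}_{2}(z)$, and watch the surviving terms collapse to $\beta\sqrt{\Delta_{\Sigma_{\beta}}}f$. As noted, the only real obstacle is the conceptual one of seeing why the degenerate value $0$ is harmless after the cut; the rest is the bookkeeping already carried out in Lemma~\ref{lemma:DtN2expls}.
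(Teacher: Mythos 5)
Your proposal is correct and follows essentially the same route as the paper: split along $\Sigma_{\beta}$, use invertibility of the Dirichlet Laplacian on $M_{\beta}$ for the interior piece, and solve the cusp piece mode by mode at $z=0$ (the paper does this via $y^{1/2}K_{1/2}(2\pi|n|\beta y)=\mathrm{const}\cdot e^{-2\pi|n|\beta y}$, which is exactly your exponential solution), arriving at the same formula ${\mathcal N}_{2}f=\beta\sqrt{\Delta_{\Sigma_{\beta}}}f$. The only cosmetic difference is that the paper additionally verifies that the two preimages $s=0$ and $s=1$ of $z=0$ yield the same cusp solution, whereas you handle that point by invoking continuity in $z$ and the limit of equation (\ref{eq:N2sm1/2}).
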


\begin{proof}
In the same way as in the proof of Lemma 3.1 in J. M\"uller and W. M\"uller \cite{MuMu}, the uniqueness of the
solution $\varphi_{1}\in C^{\infty}(M_{\beta}\setminus
\Sigma_{\beta}) \cap C^{0}(M_{\beta})$ of the Dirichlet problem on
$M_{\beta}$ follows from the invertibility of
$\Delta_{M_{\beta},D}$. The uniqueness of the solution on
$Z_{\beta}$ also follows from the invertibility of
$\Delta_{Z_{\beta},D}$. To see the existence on
$Z_{\beta}$ more explicitly let us follow the same procedure
of the proof of Lemma \ref{lemma:DtN2expls} but taking $z=0$. One way to obtain $z=0$ is to take $s=1$ in
equation (\ref{eq:fcoefsvs}). In this case the square integrable
condition gives
$$\varphi_{2}(y,x) = \sum_{n\in {\mathbb Z}} a_{n}(y) e^{2\pi i n x} =
b_{0,2}\beta + \sum_{n\neq 0} b_{n}
y^{\frac{1}{2}} K_{\frac{1}{2}}(2\pi \vert n \vert \beta y) \beta
e^{2\pi i n x}.$$ We know that $K_{\frac{1}{2}}(r)=
\sqrt{\frac{\pi}{2}} r^{-\frac{1}{2}} e^{-r}.$ Then for $n\neq 0$ we
have $a_{n}(y) = {\frac{b_{n}}{2\sqrt{\vert n\vert \beta}}}\
e^{-2\pi \vert n \vert \beta  y}.$ The boundary condition
$\varphi_{2}(\beta,x) = f(x)$, which is equivalent to
$a_{n}(\beta)=c_{n}$, gives $b_{0} = c_{0}$ and  $b_{n} = c_{n}
2\sqrt{\vert n\vert \beta} e^{2\pi \vert n\vert \beta^{2}}$. Then
$$\varphi_{2}(y,x)= c_{0}\beta + \sum_{n\neq 0} c_{n} e^{2\pi \vert
n\vert \beta^{2}} e^{-2\pi \vert n \vert \beta y} \beta e^{2\pi i n
x}.$$ Taking the inward derivative we obtain:
$$\left. y{\frac{\partial}{\partial y}} \varphi_{2}(y,x) \right\vert_{y=\beta} =
\beta\sum_{n\neq 0} -2\pi \vert n\vert \beta c_{n}\ \beta e^{2\pi i n x} =
-\beta \sqrt{\Delta_{\Sigma_{\beta}}} f.$$
The other way to obtain $z=0$ is taking $s=0$ in equation (\ref{eq:fcoefsvs}).
In this case we have:
\begin{multline*}\varphi_{2}(y,x) =
b_{0,1}\beta + \sum_{n\neq 0} b_{n}
y^{\frac{1}{2}} K_{-\frac{1}{2}}(2\pi \vert n \vert \beta y) \beta
e^{2\pi i n x}\\ = c_{0}\beta + \sum_{n\neq 0} c_{n} e^{2\pi \vert n\vert
\beta^{2}}
e^{-2\pi \vert n \vert \beta y} \beta e^{2\pi i n x},\end{multline*}
where we have used that $K_{-\frac{1}{2}}=
K_{\frac{1}{2}}$. Thus for
$s=0$ and for $s=1$, the solutions of the Dirichlet
problem on $Z_{\beta}$ are the same. Since
$\varphi_{1}\vert_{\Sigma_{\beta}}=\varphi_{2}\vert_{\Sigma_{\beta}}$,
we have that the solution $\tilde{f}$ is continuous on $M$.
\end{proof}

\begin{remark}
For $z\in \rho(\Delta_{g})$, the resolvent set of $\Delta_{g}$, it
is well known that ${\mathcal N}_{1}(z)$ is a $1$st order invertible
elliptic pseudodifferential operator. The limit, ${\mathcal N}_{1}$, as
$z\to 0$, it is well known to be a $1$st order elliptic
pseudodifferential operator, but it is non-invertible, see for
example D. Burghelea, L. Friedlander and T. Kappeler in \cite{BFK} and M.E. Taylor in \cite{Tay2} section $7.11$. Therefore the
ope\-ra\-tor ${\mathcal N}= {\mathcal N}_{1} + {\mathcal N}_{2}$ is
non-invertible. However it is non-negative and $\dim(\ker({\mathcal N}))=1$.
\end{remark}

\begin{proposition}
Let $f\in C^{\infty}(\Sigma_{\beta})$. Then ${\mathcal N}(z)f$ depends
continuously of $z$ in a small enough neighborhood of $z=0$, and
$$\lim_{z\to 0} {\mathcal N}(z)f = {\mathcal N}f.$$ \label{prop:limitDtNo}
\end{proposition}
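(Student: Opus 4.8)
The plan is to decompose $\mathcal{N}(z) = \mathcal{N}_1(z) + \mathcal{N}_2(z)$ as above and show that each piece depends continuously on $z$ near $0$ with the expected limit. For the interior piece $\mathcal{N}_1(z)$, the harmonic extension $\varphi_1$ solving $(\Delta - z)\varphi_1 = 0$ on $M_\beta$ with $\varphi_1 = f$ on $\Sigma_\beta$ is given, via the invertibility of $\Delta_{M_\beta, D}$ for $z$ in a neighborhood of $0$ (since $0 \notin \spec(\Delta_{M_\beta, D})$), by $\varphi_1 = \mathcal{E}(z) f$ where $\mathcal{E}(z)$ is built from the resolvent $(\Delta_{M_\beta, D} - z)^{-1}$ and a fixed extension operator. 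The family $z \mapsto (\Delta_{M_\beta, D} - z)^{-1}$ is holomorphic (in operator norm on $L^2(M_\beta)$, and in the relevant Sobolev topologies by elliptic regularity) on a neighborhood of $z = 0$, so $\mathcal{N}_1(z) f = -\partial_{n^+} \varphi_1$ depends continuously (indeed holomorphically) on $z$ there, with $\lim_{z\to 0}\mathcal{N}_1(z) f = \mathcal{N}_1 f$; this is the classical fact already cited from \cite{BFK} and \cite{Tay2}.

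For the cusp piece, I would use the explicit formula (\ref{eq:N2sm1/2}) from Lemma \ref{lemma:DtN2expls}: writing $z = s(1-s)$ with $\Re(s) > \frac12$, one has
\begin{equation*}
\mathcal{N}_2(s(1-s)) f = -(1-2s) c_0(f)\beta - sf + \beta\sqrt{\Delta_{\Sigma_\beta}}\,\frac{K_{s+\frac12}}{K_{s-\frac12}}\bigl(\beta\sqrt{\Delta_{\Sigma_\beta}}\bigr) f.
\end{equation*}
As $z \to 0^-$ (along the real axis), $s \to 1$, so $-(1-2s)c_0(f)\beta \to c_0(f)\beta$ and $-sf \to -f$; on the nonzero Fourier modes $2\pi|n|\beta y$ stays in the region where $K_\nu$ is analytic and nonvanishing, and $K_{3/2}/K_{1/2}$ evaluated there gives, term by term, exactly the coefficients appearing in Lemma \ref{lemma:DtNm0cusp}. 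Combining the zero mode and nonzero modes, one checks $\lim_{z\to 0}\mathcal{N}_2(z) f = \mathcal{N}_2 f = \beta\sqrt{\Delta_{\Sigma_\beta}} f$. Since $f \in C^\infty(\Sigma_\beta)$ has rapidly decaying Fourier coefficients and $K_{s+\frac12}/K_{s-\frac12}(r) \sim 1$ as $r \to \infty$ uniformly for $s$ near $1$, the Fourier series converges uniformly in $s$ near $1$, which legitimizes passing the limit inside the sum.

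The main obstacle I anticipate is the behaviour at the zero Fourier mode: the decomposition of $\spec(\Delta_g)$ shows $0$ is an isolated eigenvalue of $\Delta_g$, so $\mathcal{N}(z)$ genuinely extends past $z = 0$, but on the cusp side $\mathcal{N}_2(z)$ has no such luxury in isolation — formula (\ref{eq:N2sm1/2}) holds for $\Re(s) > \frac12$, i.e. $z < 0$, while $\mathcal{N}_2$ itself is non-invertible. The point is that the $-(1-2s)c_0(f)\beta$ term, which blows up the invertibility, has a perfectly finite limit $c_0(f)\beta$, and this finite limit is exactly what is needed so that $\mathcal{N}_1 + \mathcal{N}_2$ reassembles into the well-defined non-negative operator $\mathcal{N}$ with $\dim\ker\mathcal{N} = 1$. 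So the argument is: establish continuity of each summand separately using the explicit cusp formula on one side and resolvent holomorphy on the other, then add. I would close by recording that the convergence is in fact in the operator topology $H^1(\Sigma_\beta) \to L^2(\Sigma_\beta)$, not merely pointwise on smooth $f$, which follows from the uniform (in $s$ near $1$) symbol estimates for the pseudodifferential operators $\mathcal{N}_i(z)$.
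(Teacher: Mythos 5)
Your proposal follows essentially the same route as the paper: the interior piece ${\mathcal N}_{1}(z)$ is handled via the representation of $\varphi_{1}(z)$ through the resolvent $(\Delta_{M_{\beta},D}-z)^{-1}$, which extends to $z=0$ by invertibility of $\Delta_{M_{\beta},D}$, and the cusp piece ${\mathcal N}_{2}(z)$ is handled by taking $s\to 1$ in the explicit Bessel-function formula (\ref{eq:N2sm1/2}) and observing the zero-mode cancellation and the ratio $K_{3/2}/K_{1/2}$ on the nonzero modes. The only point the paper treats that you omit is the other branch of $z=s(1-s)$, namely $s\to 0$ using the formula (\ref{eq:N2ss1/2}) valid for $\Re(s)<\frac{1}{2}$, verifying that both parametrizations of $z\to 0$ yield the same limit $\beta\sqrt{\Delta_{\Sigma_{\beta}}}f$; this is a routine check that your argument adapts to immediately.
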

\begin{proof}
The proof of $\lim_{z \to 0}{\mathcal N}_{1}(z)f= {\mathcal N}_{1}f$ is
the same as the proof of Lemma 3.3 in J. M\"uller and W. M\"uller \cite{MuMu}. For the convenience of the
reader we repeat here the argument with our notation. For $f\in
C^{\infty}(\Sigma_{\beta})$, let $\varphi_{1}(z)$ be the unique
function in $C^{\infty}(M_{\beta}\setminus \Sigma_{\beta})$
satisfying $(\Delta_{g}-z)\varphi_{1}(z)=0$,
$\varphi_{1}(z)\vert_{\Sigma_{\beta}}=f$ and
$$\varphi_{1}(z)=
\tilde{f}-(\Delta_{M_{\beta},D}-z)^{-1}((\Delta_{M_{\beta}}-z)(\tilde{f})),$$
where $\tilde{f}\in C^{\infty}(M_{\beta})$ is any extension of $f$.
Since $\Delta_{M_{\beta},D}$ is invertible, the formula also holds
for $z=0$. From this representation of $\varphi_{1}(z)$, it follows
immediately that ${\mathcal N}_{1}(z)f$ converges to ${\mathcal N}_{1}f$ as
$z\to 0$.

Now let us take the limit of ${\mathcal N}_{2}(z)$ as $s\to 1$. To do that we use
equation (\ref{eq:N2sm1/2})
to obtain:
$$\lim_{s\to 1}{\mathcal N}_{2}(s(1-s)) f = c_{0}\beta - f + \beta
\sqrt{\Delta_{\Sigma_{\beta}}} \
{\frac{K_{{\frac{3}{2}}}}
{K_{{\frac{1}{2}}}}}(\sqrt{\Delta_{\Sigma_{\beta}}}) f.$$
Using the expression
$K_{\frac{3}{2}}(u) = \sqrt{\frac{\pi}{2}} u^{-3/2}e^{-u} (u + 1),$
we have that ${\frac{K_{{\frac{3}{2}}}(2\pi \vert n\vert
\beta^{2})}{K_{{\frac{1}{2}}}(2\pi \vert n\vert\beta^{2})}}=
{\frac{2\pi \vert n\vert\beta^{2} + 1}{2\pi \vert
n\vert\beta^{2}}}.$ Thus,
$$\lim_{s\to 1}{\mathcal N}_{2}(s(1-s)) f
= \beta \sum_{n\neq 0} 2\pi \vert n\vert \beta c_{n}\beta e^{2\pi i n x}=
\beta \sqrt{\Delta_{\Sigma_{\beta}}}f = {\mathcal N}_{2}f.$$

For the limit when $s\to 0$ we have:
\begin{align*}\lim_{s\to 0}{\mathcal N}_{2}(s(1-s)) f &= \lim_{s\to 0} -s f(x)
+ \beta \sum_{n\neq 0} 2\pi \vert n\vert \beta {\frac{K_{s+{\frac{1}{2}}}(2\pi
\vert n\vert \beta^{2})} {K_{s-{\frac{1}{2}}}(2\pi
\vert n\vert \beta^{2})}} \ c_{n} \beta e^{2\pi i n x}\\
&= \beta \sum_{n\neq 0} 2\pi \vert n\vert \beta\ c_{n} \beta e^{2\pi
i n x}= \beta \sqrt{\Delta_{\Sigma_{\beta}}}f .\end{align*} Thus it
follows that
\begin{equation*}
\lim_{s\to 1}{\mathcal N}_{2}(s(1-s))f = \lim_{s\to 0}{\mathcal N}_{2}(s(1-s))f = {\mathcal N}_{2}(0) f = \beta
\sqrt{\Delta_{\Sigma_{\beta}}}f = {\mathcal N}_{2}f.
\end{equation*}
\end{proof}

\subsection{Splitting formula for the relative determinant}

We want to have a splitting formula for the relative determinant
that relates $\det(\Delta_{g},\Delta_{\beta,0})$ to the regularized
determinant of the operator ${\mathcal N}$. We
will use this formula in section \ref{section:compactness} to prove
Theorem \ref{theorem:compactnessisospectral}. For $z\in
\rho(\Delta_{g})$ Corollary $4.6$ in G. Carron \cite{Caron} establishes the
following splitting formula for complete surfaces, which we rewrite
using his notation:
\begin{equation} \det({\mathcal L} - z, {\mathcal L}_{0,D} - z) = \det{\mathcal N}(z), \label{eq:splitformc}\end{equation}
where ${\mathcal L}$ is the
self-adjoint extension of the Laplacian on $M$ and ${\mathcal L}_{0,D}$
is the self-adjoint extension of the Laplacian on $M\setminus
\Sigma$ with Dirichlet boundary conditions on $\Sigma$. Let $\lambda > 0$,
 put $z=-\lambda$ and let us denote ${\mathcal N}(-\lambda)$ by
$R(\lambda)$. Then $R(\lambda)>0$ and it has the same properties as
${\mathcal N}(-\lambda)$. In our
case equation (\ref{eq:splitformc}) has the form:
\begin{equation}\det(\Delta_{g} + \lambda, \Delta_{Z_{\beta,D}} + \lambda)
(\det(\Delta_{M_{\beta,D}} + \lambda))^{-1} = \det {\mathcal N}(-\lambda)
= \det R(\lambda).
\label{eq:splitformournot}\end{equation}
Both sides of equation (\ref{eq:splitformournot}) diverge as
$\lambda \to 0^{+}$, we study how is this divergence.

\begin{lemma} As $\lambda \to 0^{+}$, the left hand side of equation (\ref{eq:splitformournot}) has the
following decomposition:
\begin{multline*}\log \det (\Delta_{g} +\lambda, \Delta_{Z_{\beta},D}+\lambda) - \log
\det(\Delta_{M_{\beta,D}} + \lambda)\\
 = \log\lambda +
\log \det(\Delta_{g}, \Delta_{Z_{\beta},D}) - \log \det\Delta_{M_{\beta,D}} +
o(1).\end{multline*}
\end{lemma}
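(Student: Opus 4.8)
The plan is to isolate the behavior near $\lambda = 0^+$ of each of the three determinants appearing in \eqref{eq:splitformournot} by analyzing the small-$\lambda$ asymptotics of the associated relative heat traces, and then to read off the $\log\lambda$ divergence from the zero-mode contributions. The key point is that $\Delta_{M_{\beta},D}$ is invertible (Dirichlet conditions on $\Sigma_\beta$ kill the kernel), so $\det(\Delta_{M_{\beta},D}+\lambda)$ is continuous at $\lambda=0$ and simply converges to $\det\Delta_{M_{\beta},D}$; this term contributes no divergence and only the constant $-\log\det\Delta_{M_{\beta},D}+o(1)$. All of the $\log\lambda$ divergence therefore must come from $\det(\Delta_g+\lambda,\Delta_{Z_\beta,D}+\lambda)$, and the content of the lemma is that this divergence is exactly $\log\lambda$, i.e. it comes from a single zero eigenvalue of $\Delta_g$ and from nothing else.

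First I would write, for the pair $(\Delta_g,\Delta_{Z_\beta,D})$,
\[
\log\det(\Delta_g+\lambda,\Delta_{Z_\beta,D}+\lambda)
= -\frac{d}{ds}\Big|_{s=0}\frac{1}{\Gamma(s)}\int_0^\infty e^{-t\lambda}\,\big(\tr(e^{-t\Delta_g}-e^{-t\Delta_{Z_\beta,D}})-h\big)\,t^{s-1}\,dt,
\]
where $h=\dim\ker\Delta_g-\dim\ker\Delta_{Z_\beta,D}=1-0=1$ (the constant function is the unique harmonic $L^2$ function on $M$, while $\Delta_{Z_\beta,D}$ has trivial kernel). Splitting the constant $h$ off the relative trace and using that $\int_0^\infty e^{-t\lambda}h\,t^{s-1}\,dt = h\,\Gamma(s)\lambda^{-s}$ produces a term $-\frac{d}{ds}\big|_{s=0}(h\lambda^{-s})=h\log\lambda=\log\lambda$; the remaining integral, built from $\tr(e^{-t\Delta_g}-e^{-t\Delta_{Z_\beta,D}})-h$, decays exponentially as $t\to\infty$ by assumption (3) in the definition of the relative determinant, so dominated convergence lets me set $\lambda=0$ inside it up to an $o(1)$ error, giving exactly $\log\det(\Delta_g,\Delta_{Z_\beta,D})+o(1)$. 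For the factor $(\det(\Delta_{M_\beta,D}+\lambda))^{-1}$ one argues the same way but now $h=0$, so there is no $\log\lambda$ and one obtains $-\log\det\Delta_{M_\beta,D}+o(1)$ by continuity of the zeta function of the invertible operator $\Delta_{M_\beta,D}$ in $\lambda$ near $0$. Adding the three contributions gives the claimed formula.

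The main obstacle is justifying the interchange of $\frac{d}{ds}\big|_{s=0}$ with the limit $\lambda\to 0^+$ for the non-constant part of the relative trace — equivalently, showing that the correction to $\log\det(\Delta_g+\lambda,\Delta_{Z_\beta,D}+\lambda)-\log\lambda$ beyond $\log\det(\Delta_g,\Delta_{Z_\beta,D})$ really is $o(1)$ and not, say, $O(\lambda\log\lambda)$ with a hidden logarithm. Here I would use the short-time asymptotic expansion (assumption (2)) to control the $t\to 0$ end of the Mellin integral uniformly in $\lambda\in[0,1]$, noting that $\alpha_0 > -\infty$ and that the $e^{-t\lambda}$ factor only improves convergence; on the $t\to\infty$ end the exponential decay of $\tr(e^{-t\Delta_g}-e^{-t\Delta_{Z_\beta,D}})-h$ gives a bound uniform in $\lambda\ge 0$. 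Differentiating in $s$ at $s=0$ then commutes with $\lambda\to 0$ by dominated convergence applied to the $s$-derivative of the integrand, and the difference quotient in $\lambda$ is $O(\lambda)$ away from the pure zero-mode term, hence $o(1)$. The only delicate bookkeeping is making sure no term with $\alpha_j=0$ in the short-time expansion conspires with the $e^{-t\lambda}$ factor to produce an extra $\log\lambda$; the hypothesis in (2) that $a_{jk}=0$ for $k>0$ when $\alpha_j=0$ is exactly what rules this out, so the only source of $\log\lambda$ is the constant $h$, as needed.
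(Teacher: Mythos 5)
Your argument is correct and is essentially the paper's own proof: both isolate the contribution $h\,\Gamma(s)\lambda^{-s}$ of the one-dimensional kernel of $\Delta_{g}$ to the shifted relative zeta function, obtain $\log\lambda$ from $-\frac{d}{ds}\lambda^{-s}\big|_{s=0}$, and show that the remaining Mellin integral is continuous at $\lambda=0$ (the paper via the factor $(e^{-t\lambda}-1)/\lambda$, you via dominated convergence), while the invertibility of $\Delta_{M_{\beta},D}$ handles the third determinant. Note only that your displayed formula should not carry the $-h$ inside the integral: for $\lambda>0$ both shifted operators have trivial kernel, so the shifted pair has $h=0$; your subsequent splitting of the relative trace into $(\tr(\cdot)-h)+h$ already corrects for this slip.
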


\begin{proof} Let us go back to the definition of the relative determinant and
use the definition of the relative zeta functions for $(\Delta_{g}, \Delta_{Z_{\beta},D})$ and
$(\Delta_{g}+ \lambda, \Delta_{Z_{\beta},D} + \lambda)$. From them we have:
\begin{align*}
&\zeta(s;\Delta_{g}+ \lambda, \Delta_{Z_{\beta},D} + \lambda) =
{\frac{1}{\Gamma(s)}}\int_{0}^{\infty} \tr(e^{-t\Delta_{g}} -
e^{-t\Delta_{Z_{\beta},D}}) e^{-t\lambda} t^{s-1}dt\\
\quad &= {\frac{1}{\Gamma(s)}}\int_{0}^{\infty} (\tr(e^{-t\Delta_{g}} -
e^{-t\Delta_{Z_{\beta},D}})-1) e^{-t\lambda} t^{s-1}dt +
{\frac{1}{\Gamma(s)}}
\Gamma(s) \lambda^{-s}\\
\quad &= \lambda^{-s} + \zeta(s,\Delta_{g},\Delta_{Z_{\beta},D}) +
{\frac{\lambda}{\Gamma(s)}} \int_{0}^{\infty} (\tr(e^{-t\Delta_{g}} -
e^{-t\Delta_{Z_{\beta},D}})-1) {\frac{e^{-t\lambda}-1}{\lambda}}
t^{s-1} dt.
\end{align*}
The last integral converges in a half plane. Therefore due to the
asymptotic expansions of the relative heat trace for small and large $t$,
it has an analytic continuation that is holomorphic at $s=0$. So, as $\lambda \to 0^{+}$ we obtain:
\begin{align*}
\left. {\frac{d}{ds}} \zeta(s;\Delta_{g}+ \lambda,
\Delta_{Z_{\beta},D} + \lambda)\right\vert_{s=0} &= -\log \lambda +
\left. {\frac{d}{ds}} \zeta(s;\Delta_{g},
\Delta_{Z_{\beta},D})\right\vert_{s=0}-o(1),\\
-\log \det(\Delta_{g}+ \lambda, \Delta_{Z_{\beta},D} + \lambda) &=
-\log \lambda - \log \det(\Delta_{g}, \Delta_{Z_{\beta},D}) -o(1),
\end{align*}
as desired. Similarly, from the definition of $\zeta_{\Delta_{M_{\beta,D}} + \lambda}
(s)$ it follows that $$\log \det(\Delta_{M_{\beta,D}} + \lambda) =
\log \det(\Delta_{M_{\beta,D}}) + o(1),$$ as $\lambda \to 0^{+}$. This
finishes the proof of the lemma.
\end{proof}

In order to study the asymptotic behavior of the right hand side of equation
(\ref{eq:splitformournot}), we need some preliminaries.

Let $\lambda > 0$ and $R(\lambda)$ be as above. Recall that $R\geq 0$, $\ker R =\C$ and
$\lim_{\lambda \to 0}R(\lambda)=R$. It is not difficult to prove that $R$ is self-adjoint; therefore
the regularized determinant of $R$, $\det^* R$, may defined by the
meromorphic continuation of $$\zeta^*_{R}(s)=\sum_{\mu_{i}>0}
\mu_{i}^{-s},$$
where the sum runs over the positive eigenvalues of $R$.

Now, let $\mu_{1}$ be the first non-zero eigenvalue of $R$, let $0<\mu < \mu_{1}$, and
let $P_{\mu}$ be the spectral projection of the Laplacian $\Delta_{g}$ on
$[0,\mu]$. Then by equation (\ref{eq:DtNotRestResIncl}), $R(\lambda)^{-1}$ can be decomposed as:
$$R(\lambda)^{-1} = \rho_{\Sigma_{\beta}} \circ P_{\mu}(\Delta_{g} +
\lambda)^{-1} \circ i_{\Sigma_{\beta}}
+ \rho_{\Sigma_{\beta}} \circ (I-P_{\mu})(\Delta_{g} + \lambda)^{-1} \circ
i_{\Sigma_{\beta}} =: Q_{\mu}(\lambda) +
\wt{Q}_{\mu}(\lambda).$$
The kernel of $Q_{\mu}(\lambda)$ in terms of the spectral decomposition of $\Delta_{g}$ on $M$
is given by:
\begin{multline*}K_{Q_{\mu}(\lambda)}(x,x',\lambda) = \sum_{0\leq \lambda_{j}\leq \mu}
{\frac{1}{\lambda_{j}+\lambda}}
\varphi_{j}(x) \overline{\varphi_{j}(x')}\\ + {\frac{1}{2\pi}}
\int_{0}^{\mu} {\frac{1}{\lambda + 1/4 + r^{2}}}
E(x,{\frac{1}{2}}+ir) E(x',{\frac{1}{2}}-ir) dr,\end{multline*}
for $x,x'\in \Sigma_{\beta}$.
We can further decompose $Q_{\mu}(\lambda)$ as $Q_{\mu,1}(\lambda) +
Q_{\mu,2}(\lambda)$, where $Q_{\mu,1}(\lambda)$ is given by:
$$Q_{\mu,1}(\lambda)f = {\frac{1}{\lambda}} {\frac{1}{A_{g}}}
\int_{\Sigma_{\beta}} f(x) d\mu(x), \ \mbox{ with }\
K_{Q_{\mu,1}(\lambda)}(x,x',\lambda) = {\frac{1}{\lambda}}
{\frac{1}{A_{g}}},$$ and $K_{Q_{\mu,2}(\lambda)} = K_{Q_{\mu}(\lambda)} - K_{Q_{\mu,1}(\lambda)}$.
Taking the limit as $\lambda\to 0$ of $K_{Q_{\mu,2}(\lambda)}$ we obtain:
\begin{multline*} \lim_{\lambda \to 0} K_{Q_{\mu,2}(\lambda)}(x,x',\lambda)= \sum_{0<
\lambda_{j}\leq \mu} {\frac{1}{\lambda_{j}}}
\varphi_{j}(x) \overline{\varphi_{j}(x')}\\ + {\frac{1}{2\pi}}
\int_{0}^{\mu} {\frac{1}{1/4 + r^{2}}} E(x,{\frac{1}{2}}+ir)
E(x',{\frac{1}{2}}-ir)dr.\end{multline*} Thus $\Vert Q_{\mu,2}(\lambda) \Vert$ remains bounded as $\lambda \to 0^{+}$.
In the same way as in \cite[Lemma 3.5]{MuMu}, we can prove that
there is a constant $C>0$, depending only on $\mu$, such that for all $\lambda > 0$:
\begin{equation}\Vert \wt{Q}_{\mu}(\lambda) \Vert =
\Vert
\rho_{\Sigma_{\beta}}\ \circ\  (I-P_{\mu})(\Delta_{g}+ \lambda)^{-1}
\circ i_{\Sigma_{\beta}}\Vert \leq C.\label{eq:ubcomresDtNaux}
\end{equation} 

Therefore the operator $R(\lambda)^{-1}$ can
be written as:
\begin{equation}
R(\lambda)^{-1} = Q_{\mu,1}(\lambda) + K_{\mu}(\lambda),\label{eq:decIDtNl}\end{equation}
with $\Vert K_{\mu}(\lambda)\Vert$ uniformly bounded as $\lambda \to 0^{+}$.

Now, just note that $\rho_{\Sigma_\beta}(\ker(\Delta_g)) = \ker(R)$. Let $0< \mu_{1}(\lambda) \leq \mu_{2}(\lambda) \leq \mu_{3}(\lambda) \leq
\dots $ be the eigenvalues of $R(\lambda)$. Then from the discussion above it is clear that:
\begin{eqnarray*}
\mu_{1}(\lambda)\to 0,& & \mbox{ as } \lambda\to 0,\\
\mu_{i}(\lambda) \geq c >0, & & \mbox{ for }i\geq2, \ \lambda\geq 0.
\end{eqnarray*}

\begin{lemma}
There is the following asymptotic expansion as $\lambda\to 0^{+}$:
\begin{equation}
\log \det R(\lambda) = \log \mu_{1}(\lambda) + \log {\det}^* R +
o(1). \label{eq:fdecfdetDtN}\end{equation}
\end{lemma}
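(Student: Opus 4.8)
The plan is to first split the small eigenvalue $\mu_1(\lambda)$ off the spectrum of $R(\lambda)$ by a purely algebraic manipulation of the zeta function, and then to show that the zeta-determinant of the remaining, nondegenerate part of $R(\lambda)$ converges to ${\det}^* R$ as $\lambda\to 0^+$.

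\textbf{Step 1 (algebraic splitting).} Since $R(\lambda)$ is a positive, self-adjoint, first-order elliptic pseudodifferential operator on the circle $\Sigma_\beta$, its spectrum is discrete; write its eigenvalues as $0<\mu_1(\lambda)\le\mu_2(\lambda)\le\cdots$ and let $R'(\lambda)$ denote the restriction of $R(\lambda)$ to the orthogonal complement of its (one-dimensional) lowest eigenspace. Then, for $\Re(s)$ large, $\zeta_{R(\lambda)}(s)=\mu_1(\lambda)^{-s}+\zeta_{R'(\lambda)}(s)$; since $\mu_1(\lambda)^{-s}$ is entire and $\zeta_{R(\lambda)}$ is holomorphic at $s=0$, so is $\zeta_{R'(\lambda)}$, and differentiating at $s=0$ gives
\begin{equation*}
\log\det R(\lambda)=\log\mu_1(\lambda)+\log\det R'(\lambda),\qquad \log\det R'(\lambda):=-\zeta_{R'(\lambda)}'(0).
\end{equation*}
Thus the lemma reduces to proving that $\log\det R'(\lambda)\to\log{\det}^* R$ as $\lambda\to 0^+$.

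\textbf{Step 2 (convergence of the zeta functions).} Using $\tr(e^{-tR'(\lambda)})=\tr(e^{-tR(\lambda)})-e^{-t\mu_1(\lambda)}$ and the Mellin representation, and recalling that $\zeta^*_R(s)=\frac{1}{\Gamma(s)}\int_0^\infty(\tr(e^{-tR})-1)t^{s-1}\,dt$, I would show that $\zeta_{R'(\lambda)}(s)\to\zeta^*_R(s)$ together with its first $s$-derivative, locally uniformly near $s=0$; this yields $\zeta_{R'(\lambda)}'(0)\to(\zeta^*_R)'(0)$, which is exactly what is needed. Split each Mellin integral at $t=1$. On $[1,\infty)$: the uniform lower bound $\mu_i(\lambda)\ge c$ for $i\ge 2$, together with the fact that all $R(\lambda)$ share the principal symbol $2\sqrt{g_x(\eta,\eta)}$ and hence obey uniform Weyl bounds, dominates $\tr(e^{-tR'(\lambda)})$ by a fixed integrable function, while $\mu_{i+1}(\lambda)\to\nu_i$ (the $\nu_i$ being the positive eigenvalues of $R$) gives the pointwise limit $\tr(e^{-tR'(\lambda)})\to\tr(e^{-tR})-1$; dominated convergence then controls the $[1,\infty)$-part and its $s$-derivative. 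On $(0,1]$: insert the small-time heat expansion
\begin{equation*}
\tr(e^{-tR(\lambda)})=a_0\,t^{-1}+a_1(\lambda)+r(t,\lambda),\qquad |r(t,\lambda)|\le C\,t,
\end{equation*}
where $a_0$ is independent of $\lambda$ (it depends only on the common principal symbol), $a_1(\lambda)$ is continuous in $\lambda$, and $C$ is uniform for $\lambda$ small, together with $e^{-t\mu_1(\lambda)}=1+O(t)$ uniformly (as $\mu_1(\lambda)$ stays bounded). Then $\frac{1}{\Gamma(s)}\int_0^1(\,\cdot\,)t^{s-1}\,dt$ is, near $s=0$, an explicit meromorphic function whose value and derivative at $s=0$ are continuous functions of $a_0$, $a_1(\lambda)$, $\mu_1(\lambda)$ and of an integral remainder depending continuously on $\lambda$; letting $\lambda\to 0^+$ and using $a_1(\lambda)\to a_1(0)$ and $\mu_1(\lambda)\to 0$ produces exactly the corresponding quantities for $R$. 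Combining the two regimes gives $\zeta_{R'(\lambda)}'(0)\to(\zeta^*_R)'(0)$, hence $\log\det R'(\lambda)\to\log{\det}^* R$, and with Step 1 the lemma follows.

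\textbf{Main obstacle.} The one genuinely technical point is the uniformity in $\lambda$ of the small-time heat-trace asymptotics on $(0,1]$: the constancy of $a_0$, the continuity of $a_1(\lambda)$, and the uniform remainder bound. These follow from the fact that $R(\lambda)={\mathcal N}(-\lambda)={\mathcal N}_1(-\lambda)+{\mathcal N}_2(-\lambda)$ is a continuous family --- indeed, by Theorem~2.1 of \cite{Caron}, holomorphic --- of first-order elliptic pseudodifferential operators on $S^1$ whose full symbols vary continuously with $\lambda$, so the heat coefficients are continuous and the finite-order remainders are uniformly bounded on compact $\lambda$-intervals. This can be made completely explicit, since ${\mathcal N}_2(-\lambda)$ is the concrete function of $\sqrt{\Delta_{\Sigma_\beta}}$ computed in Lemmas~\ref{lemma:DtN2expls} and \ref{lemma:DtNm0cusp}, while ${\mathcal N}_1(-\lambda)$ is the interior Dirichlet-to-Neumann operator, analytic in $\lambda$ near $0$ by the representation recalled in the proof of Proposition~\ref{prop:limitDtNo}. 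The whole argument parallels \cite[Lemma~3.5]{MuMu} and the surrounding discussion.
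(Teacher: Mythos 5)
Your proposal is correct and follows essentially the same route as the paper: the paper likewise splits off the $\mu_1(\lambda)$-eigenspace (via a contour integral around $\{0,\mu_1(\lambda)\}$, which contributes the term $\mu_1(\lambda)^{-s}$ to the zeta function) and then shows that the determinant of the remaining part $P(\lambda)^{\perp}R(\lambda)$ converges to ${\det}^* R$ using continuity of the resolvents, hence of the heat semigroups, traces, and zeta functions. Your Step 2 is in fact more explicit than the paper about the one delicate point --- uniformity in $\lambda$ of the small-time heat-trace asymptotics, needed to pass the limit through the meromorphic continuation at $s=0$ --- where the paper simply asserts that $\tr(e^{-tP(\lambda)^{\perp}R(\lambda)})$ and the associated zeta function depend continuously on $\lambda$.
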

\begin{proof}
Let $\ker(R)$ be the kernel of $R$, ${\mathcal H}=(\ker(R))^{\perp}$ be its
orthogonal complement, and
$P:L^{2}(\Sigma_{\beta})\to \ker(R)$ and
$P^{\perp}:L^{2}(\Sigma_{\beta})\to {\mathcal H}$ be the corresponding
orthogonal projections. By definition:
$$\log \det R(\lambda) :=
-\left.{\frac{d}{ds}}\right\vert_{s=0}\zeta_{R(\lambda)}(s)
=  -\left.{\frac{d}{ds}}\right\vert_{s=0} {\frac{1}{\Gamma(s)}}
\int_{0}^{\infty} \tr(e^{-tR(\lambda)}) t^{s-1} dt. $$ The first
thing to do is to separate the first eigenvalue. For that, let
$\gamma$ be a contour in $\C$ contained in the resolvent set of $R(\lambda)$, $\rho(R(\lambda))$, and
surrounding the spectrum of $R(\lambda)$, for all $\lambda\geq 0$
small enough. Then:
\begin{align*}
e^{-tR(\lambda)} &= {\frac{1}{2i \pi}} \int_{\gamma}
e^{-t\xi} (R(\lambda)-\xi)^{-1} d\xi\\ &=  {\frac{1}{2i \pi}} \int_{\gamma_{1}}
e^{-t\xi} (R(\lambda)-\xi)^{-1} d\xi +  {\frac{1}{2i \pi}} \int_{\gamma_{2}}
e^{-t\xi} (R(\lambda)-\xi)^{-1} d\xi,
\end{align*}
where $\gamma_{1}$ is a contour surrounding $\{\mu_{1}(\lambda),0\}$
and $\gamma_{2}$ surrounds the half line $[c,\infty)$, where
$\mu_{2}(\lambda)\geq c$ for all $\lambda >0$. The curves $\gamma_{1}$ and $\gamma_{2}$ can be chosen without
overlapping and independently of $\lambda$. It is clear that:
$${\frac{1}{2i \pi}} \int_{\gamma_{1}}
e^{-t\xi} (R(\lambda)-\xi)^{-1} d\xi =
e^{-t\mu_{1}(\lambda)}P(\lambda),$$
where $P(\lambda)$ is the orthogonal projection on the $\mu_{1}(\lambda)$-eigenspace. Therefore
\begin{multline*}\zeta_{R(\lambda)}(s) = {\frac{1}{\Gamma(s)}} \int_{0}^{\infty}
e^{-t \mu_{1}(\lambda)} t^{s-1} dt\\ + {\frac{1}{\Gamma(s)}}
\int_{0}^{\infty} \tr\left({\frac{1}{2i \pi}}\int_{\gamma_{2}} e^{-t\xi} (R(\lambda)-\xi)^{-1}
d\xi \right) t^{s-1}
dt.\end{multline*}

The family $R(\lambda)$ acting on
a subspace of $L^{2}(\Sigma_{\beta})$ into $L^{2}(\Sigma_{\beta})$ depends continuously on $\lambda$.
The resolvent of
$R(\lambda)$ depends continuously of $\lambda$ too. Since $R$ has
$0$ as eigenvalue, the resolvent $(R-\xi)^{-1}$ has a pole at
$\xi=0$ and can be written as:
$$(R-\xi)^{-1}= -\xi^{-1} P + A(\xi), $$
with $A(\xi)$ a holomorphic operator in $\xi$. On the other hand,
$\mu_{1}(\lambda)>0$ for $\lambda>0$. Therefore $(R(\lambda)-\xi)^{-1}$ is continuous in $\lambda$ close to $0$
and holomorphic in $\xi$ far from $\sigma(R(\lambda))$. When integrating over
$\gamma_{2}$ we are actually
dealing with the operators $P(\lambda)^{\perp}R(\lambda)$ or
$P^{\perp}R(\lambda)$.
From general results about resolvents we have that
$(P(\lambda)^{\perp}R(\lambda)-\xi)^{-1}$ converges continuously to
$(P^{\perp}R-\xi)^{-1}$ as $\lambda\to 0^{+}$,
for $\xi \in \rho(R(\lambda))$. This fact in addition to the following expressions
\begin{align*}
& e^{-t P(\lambda)^{\perp} R(\lambda)}  = {\frac{1}{2i \pi}}
\int_{\gamma_{2}} e^{-t\xi} (P(\lambda)^{\perp}R(\lambda)-\xi)^{-1}
d\xi, \text{ and, }\\
& e^{-t P^{\perp} R} = {\frac{1}{2i \pi}}
\int_{\gamma_{2}} e^{-t\xi} (P^{\perp}R-\xi)^{-1} d\xi,
\end{align*}
imply that $e^{-t P(\lambda)^{\perp}
R(\lambda)}$ converges to $e^{-t P^{\perp} R}$.
Therefore
$\tr(e^{-t P(\lambda)^{\perp} R(\lambda)})$ depends continuously on $\lambda$
and so does the zeta function.
In this way we obtain:
$$\log \det P(\lambda)^{\perp}R(\lambda) = \log \det P^{\perp}R + o(1), \text{
as } \lambda \to 0^{+}.$$
This finishes the proof of equation (\ref{eq:fdecfdetDtN}).
\end{proof}

We consider now the behavior of the term $\log \mu_{1}(\lambda)$ in equation (\ref{eq:fdecfdetDtN}):
\begin{lemma} As $\lambda \to 0^{+}$
\begin{equation}
\log \mu_{1}(\lambda) = \log\lambda +
\log\left({\frac{A_{g}}{\ell_{\beta}}}\right) + o(1), \label{eq:logmu1}
\end{equation}
where $A_{g}= \mbox{area}(M)$ and $\ell_{\beta}=\mbox{length}(\Sigma_{\beta})$.\label{Lemma:decmu1}
\end{lemma}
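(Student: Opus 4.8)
The plan is to extract the first eigenvalue $\mu_1(\lambda)$ of $R(\lambda)$ from the decomposition \eqref{eq:decIDtNl} of its inverse, $R(\lambda)^{-1} = Q_{\mu,1}(\lambda) + K_\mu(\lambda)$, where $Q_{\mu,1}(\lambda)$ is the rank-one operator $f \mapsto \frac{1}{\lambda}\frac{1}{A_g}\int_{\Sigma_\beta} f\, d\mu$ and $\|K_\mu(\lambda)\|$ stays bounded as $\lambda \to 0^+$. The point is that the unboundedness of $R(\lambda)^{-1}$ as $\lambda \to 0^+$ is entirely carried by the rank-one term $Q_{\mu,1}(\lambda)$, whose single nonzero eigenvalue I can compute explicitly. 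Since $\mu_1(\lambda)$ is the smallest positive eigenvalue of $R(\lambda)$, its reciprocal $\mu_1(\lambda)^{-1}$ is the largest eigenvalue of $R(\lambda)^{-1}$, and I want to show $\mu_1(\lambda)^{-1} = \frac{\ell_\beta}{A_g}\,\frac{1}{\lambda} + O(1)$.

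First I would identify the nonzero eigenvalue of $Q_{\mu,1}(\lambda)$: its range is spanned by the constant function $\mathbf{1}$ on $\Sigma_\beta$, and $Q_{\mu,1}(\lambda)\mathbf{1} = \frac{1}{\lambda}\frac{1}{A_g}\,\mathrm{vol}(\Sigma_\beta)\cdot \mathbf{1} = \frac{\ell_\beta}{\lambda A_g}\,\mathbf{1}$, so the operator $\frac{\lambda A_g}{\ell_\beta} Q_{\mu,1}(\lambda)$ is exactly the orthogonal projection onto the span of $\mathbf{1}$. Next I would invoke a standard perturbation/eigenvalue-continuity argument: writing $R(\lambda)^{-1} = \frac{\ell_\beta}{\lambda A_g}\Pi + K_\mu(\lambda)$ with $\Pi$ the rank-one projection and $\|K_\mu(\lambda)\| \le C$ uniformly, the top eigenvalue of $R(\lambda)^{-1}$ differs from $\frac{\ell_\beta}{\lambda A_g}$ by at most $\|K_\mu(\lambda)\| \le C$ (by, e.g., Weyl's inequality for self-adjoint operators, or by min-max). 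Hence $\mu_1(\lambda)^{-1} = \frac{\ell_\beta}{\lambda A_g}(1 + O(\lambda))$, and taking logarithms gives $-\log\mu_1(\lambda) = -\log\lambda + \log(\ell_\beta/A_g) + \log(1 + O(\lambda))$, i.e. $\log\mu_1(\lambda) = \log\lambda + \log(A_g/\ell_\beta) + o(1)$, which is \eqref{eq:logmu1}.

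There is one subtlety I would need to address carefully: the operators $R(\lambda)^{-1}$ act only on (a dense subspace of) $L^2(\Sigma_\beta)$ after the identification of $\mathcal{N}(-\lambda)^{-1}$ as $\rho_{\Sigma_\beta} \circ (\Delta_g + \lambda)^{-1} \circ i_{\Sigma_\beta}$, and I am comparing a self-adjoint operator with a genuinely rank-one piece plus a bounded remainder, so the min-max characterization applies directly to $R(\lambda)^{-1}$ as a bounded self-adjoint operator. I should also check that the eigenvalue of $R(\lambda)^{-1}$ I am tracking is indeed $\mu_1(\lambda)^{-1}$ and not some other eigenvalue — this is guaranteed because all the \emph{other} eigenvalues $\mu_i(\lambda)$, $i \ge 2$, are bounded below by $c > 0$ uniformly in $\lambda$ (as recorded just before the statement), so their reciprocals stay bounded above, while $\mu_1(\lambda)^{-1} \to \infty$; thus for small $\lambda$ the largest eigenvalue of $R(\lambda)^{-1}$ is unambiguously $\mu_1(\lambda)^{-1}$. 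The main obstacle, such as it is, is not analytic depth but bookkeeping: making sure the rank-one extraction is clean and that the error term $\log(1 + O(\lambda)) = o(1)$ is justified uniformly, which follows once $\|K_\mu(\lambda)\| \le C$ is in hand from \eqref{eq:ubcomresDtNaux} and the boundedness of $Q_{\mu,2}(\lambda)$.
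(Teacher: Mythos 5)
Your proposal is correct and follows essentially the same route as the paper: both arguments rest on the decomposition $R(\lambda)^{-1} = Q_{\mu,1}(\lambda) + K_{\mu}(\lambda)$ from \eqref{eq:decIDtNl}, identify $\Vert Q_{\mu,1}(\lambda)\Vert = \frac{\ell_{\beta}}{\lambda A_{g}}$ (the paper via the operator norm $\Vert R(\lambda)^{-1}\Vert = 1/\mu_{1}(\lambda)$, you via min--max/Weyl applied to the rank-one projection), and conclude $1/\mu_{1}(\lambda) = \frac{\ell_{\beta}}{\lambda A_{g}} + O(1)$ before taking logarithms. Your explicit verification that the divergent eigenvalue of $R(\lambda)^{-1}$ is indeed $\mu_{1}(\lambda)^{-1}$, using $\mu_{i}(\lambda)\geq c>0$ for $i\geq 2$, is a detail the paper leaves implicit but is consistent with its setup.
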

\begin{proof} First observe that ${\frac{1}{\mu_{1}(\lambda)}} = \Vert R(\lambda)^{-1}\Vert,$
where the norm is the operator norm in $L^{2}$.
From equations (\ref{eq:decIDtNl}) and $\Vert Q_{\mu,1}(\lambda) \Vert =
{\frac{1}{\lambda}}{\frac{\ell_{\beta}}{A_{g}}},$ it follows that:
$$\Vert R(\lambda)^{-1} \Vert = {\frac{\ell_{\beta}}{\lambda A_{g}}} + O(1) = {\frac{1}{\mu_{1}(\lambda)}}.$$
The expansion for the logarithm applied to ${\frac{1}{\mu_{1}(\lambda)}} = {\frac{\ell_{\beta}}{\lambda
A_{g}}} + u(\lambda)$, with $u(\lambda)=O(1)$ implies that:
$$\log \left({\frac{1}{\mu_{1}(\lambda)}}\right) = -\log(\mu_{1}(\lambda)) =
\log\left({\frac{\ell_{\beta}}{A_{g}}}\right) - \log\lambda + O(\lambda) \mbox{
as } \lambda \to 0^{+}.$$
This finishes the proof of equation (\ref{eq:logmu1}).
\end{proof}

Putting everything together we obtain the following splitting formula:

\begin{theorem} For the relative determinant of the Laplace operator on a surface
with cusps $(M,g)$, and the regularized
determinant of the Dirichlet-to-Neumann operator $R$ on
$\Sigma_{\beta}=\{\beta\}\times S^{1}\subset M$, we have the
following splitting formula:
$${\frac{\det(\Delta_{g}, \Delta_{Z_{\beta},D})}{\det(\Delta_{M_{\beta,D}})}} =
{\frac{A_{g}}{\ell_{\beta}}}{\det}^{*} R,$$
\label{theorem:splitformdets} where $A_{g}$ denotes the area of $M$
and $\ell_{\beta}$ denoted the length of $\Sigma_{\beta}$.
\end{theorem}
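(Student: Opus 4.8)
The plan is to combine the identity (\ref{eq:splitformournot}) with the three asymptotic lemmas just proved and pass to the limit $\lambda\to 0^{+}$. First I would take logarithms in (\ref{eq:splitformournot}) — legitimate since each relative determinant appearing there is a positive real number for $\lambda>0$ — to get
$$\log\det(\Delta_{g}+\lambda,\Delta_{Z_{\beta},D}+\lambda)-\log\det(\Delta_{M_{\beta,D}}+\lambda)=\log\det R(\lambda)$$
for all sufficiently small $\lambda>0$.

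Next I would insert the asymptotic expansions into the two sides. On the left, the first lemma of this subsection rewrites the expression as $\log\lambda+\log\det(\Delta_{g},\Delta_{Z_{\beta},D})-\log\det\Delta_{M_{\beta,D}}+o(1)$. On the right, equation (\ref{eq:fdecfdetDtN}) replaces $\log\det R(\lambda)$ by $\log\mu_{1}(\lambda)+\log{\det}^{*}R+o(1)$, and then equation (\ref{eq:logmu1}) replaces $\log\mu_{1}(\lambda)$ by $\log\lambda+\log(A_{g}/\ell_{\beta})+o(1)$. Equating the two sides and cancelling the common divergent term $\log\lambda$ — the crucial point being that it occurs with coefficient exactly $1$ on both sides, so that what remains is bounded as $\lambda\to 0^{+}$ — yields
$$\log\det(\Delta_{g},\Delta_{Z_{\beta},D})-\log\det\Delta_{M_{\beta,D}}=\log\frac{A_{g}}{\ell_{\beta}}+\log{\det}^{*}R+o(1).$$
Letting $\lambda\to 0^{+}$ kills the $o(1)$, and exponentiating gives the asserted identity.

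The only thing that needs care — and the reason the preceding lemmas were set up as they were — is that the two sides of (\ref{eq:splitformournot}) individually blow up as $\lambda\to 0^{+}$, because $0$ is an eigenvalue of $\Delta_{g}$ and a zero of the limiting Dirichlet-to-Neumann operator $R$; the substance of the argument is precisely that these two singularities are of the same type, a $\log\lambda$ with unit coefficient, and hence cancel. Beyond this there is essentially no obstacle in the theorem itself: once the three lemmas are in hand the proof is the one-line limit computation above. If one wanted to be scrupulous about the passage to the limit, I would simply recall that every $o(1)$ remainder entering here was produced as an explicit $\lambda\to 0^{+}$ limit of zeta-regularised quantities depending continuously on $\lambda$, so no further justification is needed.
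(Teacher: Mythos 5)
Your proposal is correct and follows essentially the same route as the paper: start from the identity $\log\det(\Delta_{g}+\lambda,\Delta_{Z_{\beta},D}+\lambda)-\log\det(\Delta_{M_{\beta,D}}+\lambda)=\log\det R(\lambda)$, substitute the three asymptotic lemmas, observe that the divergent $\log\lambda$ terms cancel with unit coefficient on both sides, and let $\lambda\to 0^{+}$. No gaps.
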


\begin{proof}
We start with the splitting formula for $\lambda>0$, and $\lambda
\in \rho(\Delta_{g})$:
$$\log \det(\Delta_{g} + \lambda, \Delta_{Z_{\beta},D} + \lambda) - \log
\det(\Delta_{M_{\beta,D}} + \lambda)
= \log\det R(\lambda)$$ From the previous lemmas we have that:
\begin{multline*}
\log \det(\Delta_{g}, \Delta_{Z_{\beta},D}) + \log\lambda  - \log
\det(\Delta_{M_{\beta},D} + \lambda) + o(1)\\ = \log\mu_{1}(\lambda)
+\log {\det}^* R + o(1)
= \log \lambda + \log\left({\frac{A_{g}}{\ell_{\beta}}}\right) +
\log {\det}^* R + o(1) \end{multline*}
Letting $\lambda \to 0$, we finally obtain:
$$\log \det(\Delta_{g}, \Delta_{Z_{\beta},D}) - \log \det(\Delta_{M_{\beta,D}})
= \log\left({\frac{A_{g}}{\ell_{\beta}}}\right) +
\log {\det}^{*} R.$$
\end{proof}

\begin{remark}
If we further decompose the operator $\Delta_{Z_{\beta},D}$ as
$\Delta_{\beta,0}\oplus \Delta_{Z_{\beta,1}}$ we obtain:
\begin{multline}
\log \det(\Delta_{g}, \Delta_{\beta,0}) -\log
\det(\Delta_{Z_{\beta},1}) - \log \det(\Delta_{M_{\beta,D}})\\ =
\log\left({\frac{A_{g}}{\ell_{\beta}}}\right) + \log {\det}^{*}
R.\label{eq:splitformallop}
\end{multline}
\end{remark}

\section{Compactness}
\label{section:compactness}

In this section we use results of W. M\"uller in \cite{Mu2} and of OPS in \cite{OPS2}. We refer the reader to these references
for all the details.

In \cite{OPS2} OPS proved that sets of isospectral isometry classes of metrics
on closed surfaces are sequentially compact in the $C^{\infty}$-topology. Let us recall some of the
main steps of the proof in the case $\chi(M)<0$. In
that setting, two metrics $g_{1}$ and $g_{2}$ are called isospectral
if the spectra of the Laplacians $\Delta_{g_1}$ and $\Delta_{g_2}$
are the same including multiplicities. In particular, the
regularized determinant and the heat invariants have the same values at $g_{1}$ and $g_{2}$.

To define the notion of convergence they fix a background metric
$g_{0}$. Associated to $g_{0}$, there is the Levi-Civita connection
and the covariant derivative that allow to differentiate in the
whole tensor algebra. A sequence of metrics $\{g_{n}\}_{n\in \N}$
converges to a metric $g$ in $C^{k}$ if $\Vert g_{n}-g \Vert_{C^{k}}
\to 0$, as $n\to \infty$. A sequence of isometry classes of metrics
$\hat{g}_{n}$ converges to an isometry class $\hat{g}$ if there are
re\-pre\-sen\-tatives $h_{n}\in \hat{g}_{n}$, $h\in \hat{g}$, such
that $h_{n} \to h$, as $n\to \infty$. Now, let $\{\rho_{n}\}_{n\in
\N}$ be a sequence of functions in $C^{k}(M)$ and let $\sigma$ be a
fixed metric on $M$. Then $\rho_{n}\sigma \to \rho \sigma$ in
$C^{k}$ as metrics if and only if $\rho_{n}\to \rho$ in $C^{k}$ as
functions. Moreover, if the metrics $\sigma_{n} \to \sigma$ in
$C^{\infty}$, and the function $\rho_{n}\to \rho$ in $C^{k}$, then
the metrics $\rho_{n}\sigma_{n} \to \rho \sigma$ in $C^{k}$.

After defining convergence and isospectrality, OPS consider a
sequence of isospectral iso\-me\-try classes of metrics
$\{\hat{g}_{n}\}_{n\in \N}$ and pick representatives $g_{n}$. For
each $g_{n}$ there is a metric of constant curvature $\tau_{n}$ such
that $g_{n}=e^{2\varphi_n}\tau_{n}$. In this way, they associate to
each $\hat{g}_{n}$ a hyperbolic isometry class $\hat{\tau}_{n}$.
They use that for each $n$, $\det \Delta_{\hat{\tau}_{n}}\geq \det
\Delta_{\hat{g}_{n}}=\text{constant}>0$ and Mumford's compactness
theorem to prove that there exists a subsequence of
$\{\hat{\tau}_{n}\}_{n\in \N}$ that converges to an element
$\hat{\tau}$ in the moduli space. To have compactness of the
conformal factors $\{\varphi_{n}\}_{n\in \N}$, they prove that for
each $k\in \N$ the $k$-th Sobolev norms $\Vert \varphi_{n}\Vert_{k}$
are uniformly bounded. Compactness in the $C^{\infty}$-topology
follows then from Rellich's Lemma and the Sobolev embedding theorems
on $M$. The constant value of the determinant and Polyakov's formula for the
regularized determinant (see \cite{OPS1}) are used to prove
uniform boundedness of the first Sobolev norm. For the higher
Sobolev norms, they use the constant values of the heat invariants.

If we restrict the proof of OPS to a conformal class, we only need the cons\-tant values of the determinant
of the Laplacians and of the heat invariants associated to the metrics.

Now, let $(M,g)$ be a surface of fixed genus $p$ and fixed number
of cusps $m$. We usually take $m=1$ to make the proofs simpler but
the statements hold for general $m$. We take $g$ as the background
Riemannian metric. Let us decompose $M$ as $M = M_{0}
\cup_{\Sigma_{\alpha}} Z_{\alpha}$ where $M_{0}$ is compact with
boundary $\Sigma_{\alpha}$ and the metric on
$Z_{\alpha}=[\alpha,\infty) \times S^{1}$ is the usual hyperbolic
metric.

Let $K$ be a compact subset of $M$ and let us define the
\lq\lq $K$-compactly supported\rq \rq\  conformal class of $g$ as
the set
\begin{equation}[g]_{K} = \{e^{2\varphi}g\ \vert \ \varphi \in
C_{c}^{\infty}(M),\
\supp \varphi \subset K \}. \label{eq:dKsccm}\end{equation}
Since $K$ is compact, there is a $\beta\geq
\alpha$ such that $K\subset M_{\beta}$ and such that $K\cap
\Sigma_{\beta} = \emptyset$. From now on, when $K$ is given we consider $\beta$ fixed.
Then for every metric in
$h\in [g]_{K}$, $(M,h)$ is a surface with cusps and the cusp is contained in $M\setminus M_{\beta}$.

For $s>0$ and $f\in H^{s}(M,g)$, the $s$-Sobolev
norm is given by $\Vert f\Vert_{H^{s}} := \Vert  (\Delta +
I)^{s/2}f\Vert_{L^{2}}$.

Since we restrict to a conformal class the notion of
convergence of metrics reduces to the convergence of the conformal
factors:
\begin{definition} A sequence of metrics $\{g_{n}\}_{n\in \N}$, with
$g_{n}=e^{2\varphi_{n}}g$ converges to a metric
$h$ in $C^{k}$ if and only if the sequence of function
$\{\varphi_{n}\}_{n\in\N}$ converges to a function $\varphi$ in
$C^{k}$.
\end{definition}

As we explained in the introduction, in the setting of surfaces with cusps
the concept of isospectrality is not enough to study inverse problems and it
should be replaced by the concept of isoresonance.
This is motivated by the close relation between eigenvalues and
resonances. The traditional approach to resonances defines them as the poles of certain meromorphic
extension of the resolvent. However, in this paper we rather work with another approach.
We use the definition of the resonance set as it is given in
\cite{Mu2} because our work relies on trace formulae stated there.

Let us recall the precise definition of the resonance set:
In \cite[p.287]{Mu2}, W. M\"uller starts assigning to each $\eta\in \C$ a multiplicity $m(\eta)$:
\begin{enumerate}
\item If $\Re(\eta)\geq 1/2$ and $\eta\neq 1/2$, $m(\eta)$ is the dimension of the eigenspace of $\Delta_{g}$
for the eigenvalue $\eta(1-\eta)$.
\item If $\Re(\eta)<1/2$. Let $E_{\eta(1-\eta)}$ denote the eigenspace of $\Delta_{g}$ for the eigenvalue $\eta(1-\eta)$. If $\phi(s)$
has a pole at $\eta$ of order $n$, then $m(\eta)= \dim(E_{\eta(1-\eta)}) + n$. If $\phi(s)$ has a zero at $\eta$ of order
$\tilde{n}$, then $m(\eta)= \dim(E_{\eta(1-\eta)}) - \tilde{n}$. By the spectral properties
of the Laplacian and the properties of the scattering phase, we know that $m(\eta)\geq 0$.
\item For $\eta={\frac{1}{2}}$ we have that $m({\frac{1}{2}})= {\frac{1}{2}}(\tr(C({\frac{1}{2}}))+m) + 2\dim(E_{{\frac{1}{4}}})$,
where $E_{{\frac{1}{4}}}$ is the ${\frac{1}{4}}$-eigenspace.
\item In any other case, $m(\eta)=0$.
\end{enumerate}

\begin{definition} \cite{Mu2}
The resonance set of $\Delta_{g}$ is the set of all $\eta\in \C$ such that
$m(\eta)>0$. Each element in the set is counted with its multiplicity.
\end{definition}

In this way, the resonance set is the union of the poles and some of the zeros of the
scattering phase $\phi(s)$ in the half-plane $\{s\ \vert \Re(s)<1/2\}$, the set $\{s_{j}\in \C \ \vert s_{j}(1-s_{j}) \text{is
an eigenvalue } of \Delta_{g}\}$ and $\{\frac{1}{2}\}$. Each element carries its multiplicity. In particular, the definition
implies that the resonance set carries the information of the value of $\tr(C({\frac{1}{2}}))$.
Since $C({\frac{1}{2}})$ is a real symmetric matrix with $C({\frac{1}{2}})^{2}=I$, its eigenvalues are $\pm 1$. Then
$\tr(C({\frac{1}{2}}))= 2\ell - m$, where $\ell$ is the algebraic multiplicity of the eigenvalue $+1$. In this way, the resonance
set determines the value of $\ell$ and the algebraic multiplicity of the eigenvalue $-1$. Therefore it determines $\phi({\frac{1}{2}})$.

We are now ready to define isoresonant surfaces with cusps:
\begin{definition}
Two cusp metrics $g_{1}$ and $g_{2}$ on $M$ are isoresonant if their
resonance sets are the same including the multiplicities.
\end{definition}

\begin{remark} The scattering phases of two isoresonant surfaces with cusps $(M,g_{1})$ and $(M,g_{2})$
are the same. This follows from Theorem 3.31 in \cite{Mu2}, that expresses the determinant of the scattering matrix as the
Weierstrass product:
\begin{equation}
\phi(s)=\phi(1/2)q^{s-1/2}\prod_{\rho} {\frac{s-1+\bar{\rho}}{s-\rho}},
\label{eq:scattphaseasWp}\end{equation}
where $\rho$ runs over all poles of $\phi(s)$, counted with the
order and $q$ is a well determined constant. Indeed, equation (5.17) in \cite{Mu2}
implies that the constant $q$ is determined by the resonance set.
\end{remark}

\begin{proposition} Let $(M,g_{1})$ and $(M,g_{2})$ be two surfaces with cusps that
are isoresonant. Let $\bar{\Delta}_{a,0}$ be the Laplacian given in Definition \ref{def:Laplmcuspas}
for any $a=(a_{1},\dots,a_{m})$ with $\min\{a_{j}, 1\leq j\leq m\}$ big enough.
Then the corresponding relative heat traces coincide, i.e.,
\begin{equation}\tr(e^{-t\Delta_{g_{1}}}-e^{-t\bar{\Delta}_{a,0}}) =
\tr(e^{-t\Delta_{g_{2}}}-e^{-t\bar{\Delta}_{a,0}}), 
\label{eq:comprhtftcm}\end{equation}
and so do the relative determinants:
\begin{equation}\det(\Delta_{g_{1}},\bar{\Delta}_{a,0})= \det(\Delta_{g_{2}},\bar{\Delta}_{a,0}
).\label{eq:compreldetfisrswc}\end{equation} \label{prop:eqrhtisrm}
\end{proposition}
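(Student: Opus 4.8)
The plan is to establish equation \eqref{eq:comprhtftcm} first and then deduce \eqref{eq:compreldetfisrswc} as an immediate consequence via the definition of the relative determinant. The starting point is a trace formula for the relative heat trace $\tr(e^{-t\Delta_{g}}-e^{-t\bar{\Delta}_{a,0}})$ valid on surfaces with cusps. Such a formula is available in M\"uller's work \cite{Mu2}: the relative heat trace can be written as a sum over the discrete spectrum of $\Delta_{g}$, a contribution from the continuous spectrum expressed through the scattering determinant $\phi(s)$ (typically via an integral involving $-\frac{\phi'}{\phi}(\tfrac12+ir)$ against the heat kernel on the line), the value $\tfrac14(\tr C(\tfrac12)-m)$ coming from $s=\tfrac12$, and purely geometric/local terms coming from the heat expansion of the model operator $\bar{\Delta}_{a,0}$ on the cusps (the kernel $p_{a}$ of \eqref{eq:psuba}). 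The point of choosing $\min_j a_j$ large enough is precisely that, once the metric is hyperbolic on the cusps $Z_{a_j}$ and the deformation is supported away from them, the cusp region of $(M,g_1)$ and $(M,g_2)$ is literally isometric to the model, so the local cusp contributions in the trace formula are identical for the two surfaces.

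Next I would compare the two sides term by term. The discrete-spectrum contributions agree because the resonance set, by the definition recalled in the excerpt, contains precisely the set $\{s_j : s_j(1-s_j)\in\sigma_p(\Delta_g)\}$ with multiplicities, so $\Delta_{g_1}$ and $\Delta_{g_2}$ have the same eigenvalues with the same multiplicities, including $\dim E_{1/4}$. The continuous-spectrum contribution is controlled by the scattering phase: by the Remark following the definition of isoresonance, two isoresonant surfaces have the same $\phi(s)$ — this uses the Weierstrass product \eqref{eq:scattphaseasWp}, the fact that the poles $\rho$ (resonances with $\Re\rho<1/2$) together with their orders are recorded in the resonance set, the identification of $\phi(1/2)$ from $\tr C(1/2)$ (itself encoded via $m(1/2)$), and M\"uller's equation (5.17) identifying the constant $q$. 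Having $\phi_{g_1}\equiv\phi_{g_2}$ forces the continuous part of the relative heat trace to coincide, and the $s=\tfrac12$ term coincides because $m(\tfrac12)$, hence $\tr C(\tfrac12)$, is part of the resonance data. Finally the geometric terms coincide as noted above. Adding up, \eqref{eq:comprhtftcm} follows.

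For \eqref{eq:compreldetfisrswc}, once the relative heat traces are equal as functions of $t>0$, the relative zeta functions $\zeta(s;\Delta_{g_i},\bar{\Delta}_{a,0})$ defined in Section \ref{subsection:swc} are literally the same function of $s$ (the defining Mellin integral has the same integrand and the same $h=\dim\ker\Delta_{g_i}-0=1$), hence so are their derivatives at $s=0$, and therefore $\det(\Delta_{g_1},\bar{\Delta}_{a,0})=\det(\Delta_{g_2},\bar{\Delta}_{a,0})$. Here one should check that the hypotheses (1)--(3) for the relative determinant genuinely hold for the pair $(\Delta_g,\bar{\Delta}_{a,0})$ with $\min_j a_j$ large, which is exactly what M\"uller's results in \cite{Mu} and \cite{Mu3} provide — this is already asserted at the end of Section \ref{subsection:swc}.

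\textbf{Main obstacle.} The real work is in writing down the correct relative trace formula and verifying that the cusp contributions of the two metrics match: one must be careful that "$\min_j a_j$ big enough" makes the deformation region $K$ disjoint from each $Z_{a_j}$, so that on $Z_{a_j}$ both $\Delta_{g_1}$ and $\Delta_{g_2}$ equal the model hyperbolic operator and the local heat contributions (and the boundary terms at $y=a_j$) cancel identically; and that the non-local pieces are captured entirely by the discrete spectrum and $\phi(s)$. Thus the whole argument rests on two inputs from \cite{Mu2}: the trace formula and the fact that the resonance set determines $\phi(s)$; the rest is bookkeeping.
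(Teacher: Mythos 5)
Your proposal is correct and follows essentially the same route as the paper: both rest on M\"uller's relative trace formula from \cite{Mu2} and the observation that every term in it (eigenvalues, the scattering contribution via the resonance data and the constant $q$ from (5.17), the $\tr C(\tfrac12)$ term, and the cusp term depending only on the $a_j$) is determined by the resonance set, after which the equality of determinants is immediate from the definition of the relative zeta function. The only cosmetic difference is that you compare the continuous-spectrum integrals by first concluding $\phi_{g_1}\equiv\phi_{g_2}$ from the Weierstrass product, whereas the paper rewrites that integral explicitly via Theorem 5.11 of \cite{Mu2}; these use the same inputs.
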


\begin{proof}
The proof of this proposition follows straight forward from the results of \cite{Mu2}. Let $(M,g_{1})$ and $(M,g_{2})$ be isoresonant.
The trace formula for the relative heat operators in (\cite[eq. (2.2)]{Mu2}) establishes:
\begin{multline}
\tr(e^{-t\Delta_{g}}-e^{-t\bar{\Delta}_{a,0}}) =
\int_{M}(K_{g}(z,z,t)-\sum_{j=1}^{m} p_{a_{j}}(z,z,t))
dA_{g}(z)\\ = \sum_{k} e^{-\lambda_{k}t} -
{\frac{1}{4\pi}}\int_{-\infty}^{\infty} e^{-(1/4+\lambda^{2})t}
{\frac{\phi'}{\phi}}(1/2 + i\lambda) d\lambda\\ + {\frac{1}{4}}
e^{-t/4} (\tr(C_{g}(1/2))+m) + {\frac{e^{-t/4}}{\sqrt{4\pi t}}}
\sum_{j=1}^{m} \log(a_{j}), \label{eq:relheatteq22Mu2}
\end{multline}
where $p_{a_{j}}(z,z,t))$ is given by equation (\ref{eq:psuba}).
The term ${\frac{m}{4}} e^{-t/4}$ on the right hand side of (\ref{eq:relheatteq22Mu2})
is missing in (\cite[eq. (2.2)]{Mu2}) because of a missprint. This term comes from the boundary condition of the
model operator $\bar{\Delta}_{a,0}$. Now, by Theorem $5.11$ in \cite{Mu2} the integral
that involves the logarithmic derivative of the scattering matrix
can be rewritten as follows:
\begin{multline}
- {\frac{1}{4\pi}}\int_{-\infty}^{\infty} e^{-(1/4+\lambda^{2})t}
{\frac{\phi'}{\phi}}(1/2 + i\lambda) d\lambda =
{\frac{-\log(q)}{(4\pi)^{3/2}}} {\frac{e^{-t/4}}{\sqrt{t}}}\\  +
{\frac{1}{4}} \sum_{\rho} n(\rho)\{ e^{-t{\rho}(1-{\rho})}
\text{Erfc}((\sqrt{t}(1/2-\rho)) +
e^{-t\bar{\rho}(1-\bar{\rho})} \text{Erfc}(\sqrt{t}(1-\bar{\rho})))\},
\label{eq:Mustracef}\end{multline} where $\rho$ runs over all zeros
and poles of $\phi(s)$ in $\Re(s)<1/2$, $n(\rho)$ denotes either the
order of the pole $\rho$ or the negative of the order of the zero
$\rho$, $q$ is the same constant as in equation (\ref{eq:scattphaseasWp}), and Erfc is the
complementary error function, see \cite[(5.13)]{Mu2}.
In addition, it is clear that the eigenvalues of the Laplacians coincide.
Then, equations
(\ref{eq:relheatteq22Mu2}) and (\ref{eq:Mustracef}) imply equation (\ref{eq:comprhtftcm}).
Equation (\ref{eq:compreldetfisrswc}) follows straightforward from the definition of the relative determinant.
\end{proof}

We are ready to state the main theorem of this section:
\begin{theorem}
Let $(M,g)$ be a surface with cusps and with $\chi(M)<0$, let $K\subset M$ be a fixed
compact subset of $M$ and let $[g]_{K}$ be the
$K$-compactly supported conformal class of $g$. Then isoresonant
sets in $[g]_{K}$ are compact in the $C^{\infty}$-topology.
\label{theorem:compactnessisospectral}
\end{theorem}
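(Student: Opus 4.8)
The plan is to reduce the statement, via the splitting formula of Section \ref{section:splitting}, to a compactness statement for conformal factors on the compact surface-with-boundary $M_{\beta}$, where one can run the argument of Osgood--Phillips--Sarnak \cite{OPS2} (in the boundary version of Kim \cite{Kim}) restricted to a conformal class. Since $[g]_{K}$ has a \emph{fixed} underlying conformal structure, only the conformal factors vary, so no Mumford-type compactness of conformal structures will be needed.

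First I would fix $\beta$ large enough that $K\subset M_{\beta}$, that $K\cap\Sigma_{\beta}=\emptyset$, and that Proposition \ref{prop:eqrhtisrm} applies with all $a_{j}=\beta$; since $K$ is compact, every $g_{n}=e^{2\varphi_{n}}g\in[g]_{K}$ then agrees with $g$ on a fixed neighbourhood of $\Sigma_{\beta}$ and on all of $Z_{\beta}$. Given a sequence $\{g_{n}\}$ of pairwise isoresonant metrics in $[g]_{K}$, Proposition \ref{prop:eqrhtisrm} says that the relative heat traces $\tr(e^{-t\Delta_{g_{n}}}-e^{-t\bar{\Delta}_{\beta,0}})$ and the relative determinants $\det(\Delta_{g_{n}},\bar{\Delta}_{\beta,0})$ do not depend on $n$. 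Reading off the coefficient of $t^{-1}$ in \eqref{eq:relheatteq22Mu2} (all remaining terms there are $O(t^{-1/2})$ by \eqref{eq:Mustracef}) shows the areas $A_{g_{n}}$ are constant, and comparing the full small-$t$ expansions, together with locality of the heat-invariant densities and $g_{n}\equiv g$ off $K$, shows that the interior heat-invariant integrals over $M_{\beta}$---hence the Dirichlet heat invariants of $(M_{\beta},g_{n})$, whose boundary contributions depend only on the fixed germ of $g$ along $\Sigma_{\beta}$---are also constant. I would then feed each $g_{n}$ into the splitting formula \eqref{eq:splitformallop}: there $\Delta_{Z_{\beta},1}$ and $\ell_{\beta}$ are manifestly $n$-independent, and the crucial point is that the limiting Dirichlet-to-Neumann operator $R$ of Lemma \ref{lemma:DtNm0cusp} and Proposition \ref{prop:limitDtNo} is $n$-independent as well: in dimension two $\Delta_{g_{n}}u=0\iff\Delta_{g}u=0$, so the harmonic extension defining $\mathcal{N}$ is the same function for every $g_{n}$, and since $g_{n}=g$ near $\Sigma_{\beta}$ the inward unit normals, hence $R=\mathcal{N}$, are unchanged. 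Consequently ${\det}^{*}R$ is constant, and \eqref{eq:splitformallop} forces $\det\Delta_{M_{\beta},D}$ for the metric $g_{n}$ to be independent of $n$. (For $m>1$ the identical argument runs with $\bar{\Delta}_{\beta,0}=\oplus_{j}\Delta_{\beta,0}$ and $\Sigma_{\beta}$ a disjoint union of circles.)

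At this point I would invoke the compact-surface machinery. We have metrics $e^{2\varphi_{n}}g|_{M_{\beta}}$ on the fixed compact surface $M_{\beta}$ (with $\chi(M_{\beta})=\chi(M)<0$), conformal to $g|_{M_{\beta}}$ with $\supp\varphi_{n}\subset K$ in the interior, for which the Dirichlet determinant and all Dirichlet heat invariants are $n$-independent; since the conformal class is fixed it only remains to show $\{\varphi_{n}\}$ is precompact in $C^{\infty}(M_{\beta})$. Polyakov's formula for $\log\det\Delta_{\cdot,D}$ under a conformal change supported away from the boundary reduces to $-\tfrac{1}{12\pi}\int_{M_{\beta}}(|\nabla\varphi_{n}|^{2}+2K_{g}\varphi_{n})\,dA_{g}$ with no boundary term, so constancy of the determinant bounds $\|\nabla\varphi_{n}\|_{L^{2}}$; together with $A_{g_{n}}=\int_{M_{\beta}}e^{2\varphi_{n}}dA_{g}+$ const being bounded, and the Moser--Trudinger inequality on the compact $M_{\beta}$, this bounds $\|\varphi_{n}\|_{H^{1}}$, after which the higher norms $\|\varphi_{n}\|_{H^{k}}$ follow by induction from the constancy of the Dirichlet heat invariants, exactly as in \cite{OPS2}. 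Rellich's lemma and Sobolev embedding on the compact $M_{\beta}$---available there although they fail on $M$ itself, where $\inj\to0$ in the cusp---then give a subsequence $\varphi_{n}\to\varphi$ in $C^{\infty}$. Extending by zero, $g_{\infty}=e^{2\varphi}g\in[g]_{K}$ with $g_{n}\to g_{\infty}$ in $C^{\infty}(M)$ and $\supp\varphi\subset K$; since the perturbation is $C^{\infty}$ and supported in $K$, the resolvents converge and the resonance set of $g_{\infty}$ equals the common resonance set of the $g_{n}$, so $g_{\infty}$ lies in the isoresonant set. Hence that set is sequentially closed and sequentially precompact, i.e. compact, in the $C^{\infty}$ topology.

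The hard part is the reduction in the second paragraph: passing from constancy of the \emph{relative} heat trace and relative determinant on the non-compact $M$ to constancy of the \emph{Dirichlet} determinant and heat invariants of the compact piece $(M_{\beta},g_{n})$. This is exactly what the splitting formula of Theorem \ref{theorem:splitformdets}, together with the two-dimensional conformal invariance of the $z=0$ Dirichlet-to-Neumann operator, are designed to supply; one must be careful that the heat trace on $M$ does \emph{not} split additively, so the constancy of the invariants on $M_{\beta}$ has to be extracted from locality of those invariants rather than from any false additivity.
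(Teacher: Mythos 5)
Your proposal is correct and its skeleton coincides with the paper's: both arguments rest on Proposition \ref{prop:eqrhtisrm} (constancy of the relative heat trace and relative determinant along an isoresonant sequence), on the splitting formula \eqref{eq:splitformallop}, on the two--dimensional conformal invariance of harmonicity plus $g_{n}=g$ near $\Sigma_{\beta}$ to make the limiting Dirichlet-to-Neumann operator $R$ independent of $n$, and on locality of the heat-invariant densities to transfer constancy of the invariants to the compact piece, after which the OPS machinery restricted to a conformal class takes over. The genuine difference is where the compact problem lives. The paper never works on the surface-with-boundary $M_{\beta}$ as its final arena: it builds a \emph{closed} compactification $(\wt{M},\sigma)$ with $\sigma=f^{2}g$ as in \eqref{eq:funcfcpp}--\eqref{eq:auxmetsigma}, transports the conformal factors to $\wt{g}_{n}=e^{2\varphi_{n}}\sigma$, and then needs a \emph{second} splitting formula (the closed-surface BFK formula across $\Sigma_{\beta}$) to convert constancy of $\det\Delta_{M_{\beta},D}$ into constancy of $\det\Delta_{\wt{g}_{n}}$, so that OPS \cite{OPS2} can be cited verbatim for closed surfaces; the heat-invariant lemma is likewise run on $\wt{M}$ via a parametrix gluing. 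You instead stop at $(M_{\beta},g_{n})$ with Dirichlet conditions and invoke the boundary version of the OPS argument (Polyakov--Alvarez as in \cite{Kim}), observing that since $\supp\varphi_{n}\subset K$ is interior all boundary terms in the conformal anomaly and in the heat expansion are $n$-independent. What each buys: the paper's route avoids any boundary Polyakov formula or boundary heat invariants at the cost of the auxiliary metric $\sigma$ and the extra BFK step; your route avoids the compactification and the second splitting formula at the cost of importing the surface-with-boundary determinant machinery (a dependency the paper deliberately does not use, though it is legitimate and arguably simpler here, since the vanishing of $\varphi_{n}$ near $\Sigma_{\beta}$ even gives the $H^{1}$ bound by Poincar\'e without Moser--Trudinger). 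Two small caveats: your final claim that the limit metric $g_{\infty}$ is again isoresonant (closedness of the set) is only sketched via resolvent convergence and would need a real argument about continuity of the resonance set under compactly supported $C^{\infty}$ perturbations -- the paper does not prove this either, contenting itself with sequential precompactness; and your extraction of the constancy of $\int_{K}a_{j}(\cdot,g_{n})\,dA_{g_{n}}$ from the relative heat trace should be routed through a parametrix comparison as in \eqref{eq:auxreltraces}, since, as you yourself note, the heat trace does not split additively.
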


The proof of the theorem consists in reducing to the compact case and
apply the result of OPS in \cite{OPS2} restricted to a conformal class.

\begin{proof}
First of all we need to compactify $M$ to a Riemannian manifold that
contains $K$ iso\-me\-tri\-ca\-lly. It is convenient at this point to change
coordinates in the cusp, we first identify $z=(y,x)\in [\alpha,\infty)\times S^{1}$ with
$z = x + iy \in S^{1}\times i [\alpha,\infty) \subset \C$
and then we apply the transformation $z\to w=e^{iz}$.
Then $Z_{\alpha}$ becomes $\{w\in \C : 0 < \vert w\vert \leq
e^{-\alpha}\}=:D^{*}_{e^{-\alpha}}$ and the metric on it becomes
$$\left. g\right\vert_{D^{*}_{e^{-\alpha}}}=\log(\vert
w\vert^{-1})^{-2} \vert w\vert^{-2} \vert dw\vert^{2}.$$ Let us keep
the old notation in these new coordinates. Then for any $b\geq \alpha$,
$M_{b}=M_{0}\cup (D^{*}_{e^{-\alpha}} \setminus D^{*}_{e^{-b}})\cup
\Sigma_{b}$ and we could also denote $D^{*}_{e^{-b}}$ by $Z_{b}$.
Let $\beta > \alpha$ be fixed, as it was explained after equation
(\ref{eq:dKsccm}). Let $f\in C^{\infty}(M)$ satisfy:
\begin{equation}f(w):=
\begin{cases}
\vert \log(\vert w\vert)\vert \vert w\vert & \mbox{ if } w\in
D^{*}_{e^{-\beta-2}}(\cong Z_{\beta+2})\cr
 1 & \mbox{ if } w\in M_{\beta+1}, \cr
\end{cases}\label{eq:funcfcpp} \end{equation}
and put:
\begin{equation}\sigma = f(z)^{2}\cdot g.\label{eq:auxmetsigma}\end{equation}
\noindent Then take
$\widetilde{M} = M\cup \{0\}$ the one-point compactification of $M$
($m$-point compactification if $M$ has $m$ cusps). The metric
$\sigma$ on $M$ extends to a smooth metric on $\wt{M}$ which we
denote again by $\sigma$. Thus $(\wt M,\sigma)$ is a closed manifold
that
contains $M_{\beta}$
isometrically and that has the same genus as $M$. In particular, $K\subset M_{0}\cup (D^{*}_{e^{-\alpha}} \setminus
D^{*}_{e^{-\beta}})$.

Now let $\{g_{n}\}_{n\in \N} \subset [g]_{K}$ be a sequence of
isoresonant metrics. Notice that since
the metrics in the sequence are isoresonant, they have all the same zeroth
 heat invariant, therefore their areas $A_{g_{n}}$
have the same value. Since $g_{n}\in [g]_{K}$, there exists a
function $\varphi_{n}\in C_{c}^{\infty}(M)$ such that $g_{n}=
e^{2\varphi_{n}}g$ and $\supp \varphi_{n} \subset K$, for each $n\in
\N$. Now put: $$\widetilde{g}_{n} := e^{2\varphi_{n}}\sigma.$$ Then
the metrics $\wt{g}_{n}$ are conformal to $\sigma$ on
$\widetilde{M}$. The fact that $K \varsubsetneq
M_{\beta+1}=M\setminus D^{*}_{e^{-\beta-1}}$ and $\left. \sigma
\right\vert_{M_{\beta +1}} = \left. g \right\vert_{M_{\beta +1}}$
imply that the values $A_{g_n} - A_{g}(D^{*}_{e^{\beta+1}})$ are
constant. Then the areas $A_{\wt{g}_n}$ of $(\wt{M},\wt{g}_{n})$
have all the same value; this follows from:
$$A_{\wt{g}_n} = A_{g_n} - A_{g}(D^{*}_{e^{\beta+1}}) +
A_{\sigma}(\wt{M}\setminus M_{\beta+1}).$$ Therefore we can
renormalize the metrics $\wt{g}_{n}$ such that $A_{\wt g_{n}}= 1$.

In addition, the definitions of $K$ and $\sigma$,
the condition $\supp \varphi_{n}\subset K$ for all $n\in \N$, and the
locality of the Laplacians $\Delta_{g}$ and $\Delta_{\sigma}$ imply
that
\begin{equation}
\Vert \varphi_{n}\Vert^{2}_{H^{k}(\widetilde{M}, \sigma)}=\Vert
\varphi_{n}\Vert^{2}_{H^{k}(M,g)}\label{eq:eqSobnorncMacM}.\end{equation}

Notice that compactness of $\{\varphi_{n}\}_{n\in \N}$ in
$C^{\infty}(\wt{M},\sigma)$ together with $\supp \varphi_{n}\subset
K\Subset M$ for all $n\in \N$, imply compactness of
$\{\varphi_{n}\}_{n\in \N}$ in $C^{\infty}(M,g)$. Therefore, in order to prove
compactness in $C^{\infty}(M,g)$ we need to prove uniform boundedness of the sequence
$\{\Vert \varphi_{n}\Vert_{H^{k}(\widetilde{M}, \sigma)}\}_{n\in \N}$ of
the $k$-th Sobolev norms for each $k \geq 1$, i.e. we need to prove that
$$\Vert \varphi_{n}\Vert_{H^{k}(\widetilde{M}, \sigma)} \leq C(k) \ \ \ \ \mbox{
for all } n\in \N,\label{eq:auxubSnsf}$$
where $C(k)$ is a constant that may depend on $k$.

In Lemmas \ref{lemma:consdetascompmet} and
\ref{lemma:eqhinvcorcompm} we prove that if $\{g_{n}\}_{n\in \N}$ is
isoresonant then $\det \Delta_{\widetilde{g}_{n}}$ is constant and
the heat invariants of the metrics $\wt{g}_{n}$ are the same for all
$n$.

Then the theorem follows from the results of OPS in \cite{OPS2} since
the uniform bound of the Sobolev norms of the
functions $\{\varphi_{n}\}_{n\in \N}$ in $(\wt{M},\sigma)$, res\-tric\-ted to our case,
only requires that the determinants, $\det \Delta_{\wt{g}_{n}}$,
the areas, $A_{\wt{g}_{n}}$, and the heat invariants,
$a_{j}(g_{n})$, are constants independent of $n$.
\end{proof}

\begin{lemma} Let $\{g_{n}\}_{n\in \N}$ be a sequence of isoresonant metrics in a
conformal class $[g]_{K}$. Let $\{\wt{g}_{n}\}_{n\in \N}$ be the asso\-cia\-ted sequence of
metrics on $\wt{M}$ defined above. Then the regularized determinants $\det \Delta_{\widetilde{g}_{n}}$
are constant, i.e. their value is independent of $n$. \label{lemma:consdetascompmet}
\end{lemma}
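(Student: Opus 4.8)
The plan is to express $\log\det\Delta_{\wt g_n}$ in terms of the relative determinant $\det(\Delta_{g_n},\bar\Delta_{\beta,0})$, which is constant by Proposition \ref{prop:eqrhtisrm}, plus correction terms that are manifestly independent of $n$ because the metrics $g_n$ all agree with $g$ outside the fixed compact set $K\subset M_\beta$. Recall that $\wt g_n=e^{2\varphi_n}\sigma$ on the closed surface $\wt M$, that $\supp\varphi_n\subset K\subset M_\beta$, and that $\sigma$ coincides with $g$ on $M_{\beta+1}\supset M_\beta$; in particular $\Delta_{\wt g_n}$ restricted to the cusp region $\wt M\setminus M_\beta$ is one fixed operator, call it $\Delta_{N,D}$ (with $N=\wt M\setminus M_\beta$), and $\Delta_{M_\beta,D}^{\wt g_n}$ depends on $n$ only through the conformal factor on $K$.

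The key step is a double application of the Burghelea--Friedlander--Kappeler (BFK) decomposition along $\Sigma_\beta$. On the closed surface $\wt M$, BFK gives
\begin{equation*}
\det\Delta_{\wt g_n}=\det(\Delta_{M_\beta,D}^{\wt g_n})\cdot\det(\Delta_{N,D})\cdot\frac{\det\wt{\mathcal N}_n}{A_{\wt g_n}},
\end{equation*}
where $\wt{\mathcal N}_n$ is the Dirichlet-to-Neumann operator on $\Sigma_\beta$ for $\Delta_{\wt g_n}$ and the factor $A_{\wt g_n}$ (which is normalized to $1$) accounts for the one-dimensional kernel. On the surface with cusps $(M,g_n)$, Theorem \ref{theorem:splitformdets} — equivalently equation (\ref{eq:splitformallop}) — gives
\begin{equation*}
\log\det(\Delta_{g_n},\Delta_{\beta,0})=\log\det(\Delta_{Z_\beta,1})+\log\det(\Delta_{M_\beta,D}^{g_n})+\log\frac{A_{g_n}}{\ell_\beta}+\log{\det}^*\mathcal N_n,
\end{equation*}
where $\mathcal N_n$ is the DtN operator on $\Sigma_\beta$ for $\Delta_{g_n}$ on $M$. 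Now $g_n$ and $\wt g_n$ agree on a full neighborhood of $M_\beta$ in their respective manifolds, so $\Delta_{M_\beta,D}^{g_n}=\Delta_{M_\beta,D}^{\wt g_n}$ as operators, and — crucially — $\mathcal N_n=\wt{\mathcal N}_n$ because the DtN operator on $\Sigma_\beta$ is determined by the solution operator of $\Delta$ on $M_\beta$ together with the solution operator of $\Delta$ on the complementary piece (the hyperbolic cusp $Z_\beta$ in one case, the cap $N$ in the other), and \emph{these boundary problems on the two complementary pieces have the same DtN contribution at $z=0$}: by Lemma \ref{lemma:DtNm0cusp} the cusp contributes ${\mathcal N}_2=\beta\sqrt{\Delta_{\Sigma_\beta}}$, and the cap $N$ contributes its own fixed, $n$-independent operator. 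Subtracting the two displayed identities, every $n$-dependent quantity cancels: $\log\det(\Delta_{M_\beta,D}^{g_n})$ cancels, $\log\det\mathcal N_n=\log({\det}^*\mathcal N_n\cdot\mu_1)$ is tied to $\log{\det}^*\mathcal N_n$ up to the $n$-independent lowest DtN eigenvalue on $\Sigma_\beta$, and $A_{g_n}$, $\ell_\beta$, $A_{\wt g_n}$, $\det(\Delta_{Z_\beta,1})$, $\det(\Delta_{N,D})$ are all constants. Hence $\log\det\Delta_{\wt g_n}$ differs from $\log\det(\Delta_{g_n},\Delta_{\beta,0})$ by a constant, and the latter is independent of $n$ by Proposition \ref{prop:eqrhtisrm}.

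I expect the main obstacle to be bookkeeping the kernel/zero-mode contributions correctly: $\Delta_{g_n}$ on $M$ has a one-dimensional kernel (the constants), $\Delta_{\wt g_n}$ on $\wt M$ also has a one-dimensional kernel, and the DtN operators $\mathcal N_n$, $\wt{\mathcal N}_n$ are non-invertible with one-dimensional kernels, so one must track which ${\det}$ versus ${\det}^*$ appears and exactly which area/length factors accompany each zero mode — this is precisely the content of Lemmas \ref{lemma:DtNm0cusp}, \ref{Lemma:decmu1} and Theorem \ref{theorem:splitformdets}, applied now with the closed-surface BFK formula as the second input. A secondary technical point is to justify that the BFK formula and Carron's splitting formula (\ref{eq:splitformc}) are being applied to operators differing only on a compact region away from $\Sigma_\beta$, so that the relevant interior Dirichlet determinants and DtN operators literally coincide; this follows from locality of the Laplacian together with (\ref{eq:eqSobnorncMacM}) and the construction of $\sigma$. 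Once these identifications are in place the lemma is immediate.
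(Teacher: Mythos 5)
Your architecture is exactly the paper's: apply the splitting formula of Theorem \ref{theorem:splitformdets} (in the form (\ref{eq:splitformallop})) on the cusped surface, the BFK formula on the closed surface $\wt{M}$, subtract, and observe that everything $n$-dependent cancels against the constant relative determinant from Proposition \ref{prop:eqrhtisrm}. The identification $\Delta_{M_\beta,D}^{g_n}=\Delta_{M_\beta,D}^{\wt{g}_n}$, the constancy of $\det\Delta_{(\wt{M}\setminus M_\beta,\wt{g}_n),D}$, and the area/length bookkeeping are all as in the paper.

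There is, however, one step where your justification is wrong as stated. You claim ${\mathcal N}_n=\wt{{\mathcal N}}_n$; this is false, since the two operators have different exterior contributions (the cusp gives $\beta\sqrt{\Delta_{\Sigma_\beta}}$ by Lemma \ref{lemma:DtNm0cusp}, the cap $N$ gives a different operator), and your own parenthetical concedes this. What the subtraction actually requires is that ${\det}^*{\mathcal N}_n$ and ${\det}^*\wt{{\mathcal N}}_n$ are \emph{each} independent of $n$ (two different constants), and for that the delicate point is the interior piece ${\mathcal N}_{1,n}$: the harmonic extension into $M_\beta$ is computed with respect to $e^{2\varphi_n}g$, and $\varphi_n$ lives inside $M_\beta$, so ``locality of the Laplacian'' does not help here. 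The ingredient you are missing is the conformal covariance of the two-dimensional Laplacian: $\Delta_{e^{2\varphi_n}g}\wt{u}=e^{-2\varphi_n}\Delta_g\wt{u}$, so the harmonic extension of a boundary datum is the \emph{same function} for every $g_n$, and since all the metrics agree in a neighborhood of $\Sigma_\beta$ the normal derivatives agree as well; hence $R_{g_n}$ is one fixed operator, and likewise $R_{\wt{g}_n}$. This is precisely how the paper argues, and it is also the reason the whole theorem is restricted to a conformal class. Note also that the difference $\log{\det}^*\wt{{\mathcal N}}_n-\log{\det}^*{\mathcal N}_n$ cannot be handled by saying the two operators ``share'' the interior piece: determinants do not respect such decompositions, so you genuinely need the $n$-independence of each operator separately. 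With that repair your proof coincides with the paper's.
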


\begin{proof}
Let $h$ be any metric in $[g]_{K}$. Remember the construction we did in the proof of
Theorem \ref{theorem:compactnessisospectral}.
Recall that $\wt{M} = M\cup \{0\}$, the
one-point compactification of $M$, is endowed with a smooth Riemannian metric $\sigma$ obtained from equation
(\ref{eq:funcfcpp}). Let $\wt{h}$ be the metric on $\wt{M}$ corresponding to $h$ via the process described in the proof of
Theorem \ref{theorem:compactnessisospectral}.
Then for the relative determinant
of $(\Delta_{h},\Delta_{\beta,0})$ and the determinant of
$\Delta_{\wt{h}}$ we have the following splitting formulas:
\begin{equation*}
\log \det(\Delta_{h},\Delta_{\beta,0}) - \log \det\Delta_{Z_{\beta},1} - \log
\det \Delta_{(M_{\beta},h),D}
= \log \left({\frac{A_{h}(M)}{\ell(\Sigma_{\beta},h)}}\right) + \log
{\det}^{*}R_{h}
\end{equation*}
and
\begin{multline*}
\log \det \Delta_{(\widetilde{M},\widetilde{h})} -
\log \det \Delta_{(M_{\beta},\widetilde{h}),D}
- \log \det\Delta_{(\wt{M}\setminus M_{\beta},\widetilde{h}),D}\\
= \log \left({\frac{A_{\wt
h}(\wt{M})}{\ell(\Sigma_{\beta},\widetilde{h})}}\right)
+ \log {\det}^{*} R_{\widetilde{h}},
\end{multline*}
where the first formula was proved in Theorem
\ref{theorem:splitformdets} (equation (\ref{eq:splitformallop})), and the second formula is the well
known splitting formula for a closed surface, as in Burghelea, Friedlander and Kappeler \cite{BFK}.
Subtracting the equations we obtain:
\begin{multline*}
\log \det \Delta_{(\widetilde{M},\widetilde{h})} - \log
\det(\Delta_{h},\Delta_{\beta,0})  + \log \det\Delta_{Z_{\beta},1}
- \log \det\Delta_{(\wt{M}\setminus M_{\beta},\widetilde{h}),D}\\
= \log \left({\frac{A_{\wt
h}(\wt{M})}{\ell(\Sigma_{\beta},\widetilde{h})}}\right) -\log
\left({\frac{A_{h}(M)}{\ell(\Sigma_{\beta},h)}}\right) + \log
{\det}^{*} R_{\widetilde{h}} -\log {\det}^{*}R_{h}.
\end{multline*}

From the definition of $f$ we have that $\widetilde{h}=h$ on
$M_{\beta+1}$, and $f\equiv 1$ in a neighborhood of
$\Sigma_{\beta}$. So we have that
$\ell(\Sigma_{\beta},h)=\ell(\Sigma_{\beta},\widetilde{h})$.

Now, let $\{g_{n}\}_{n\in \N}$ be a sequence of isoresonant metrics in
$[g]_{K}$ satisfying the hypothesis of this lemma, and let $\{\wt{g}_{n}\}_{n\in \N}$ be the corresponding sequence in $\wt{M}$.
If we take $h=g_{n}$, the Dirichlet-to-Neumann operators are the same for all $n$.
To see this, notice that given a function $u\in
C^{\infty}(\Sigma_{\beta})$, the unique solution to the problem
$\Delta_{g} \wt{u}=0$ on $M\setminus \Sigma_{\beta}$ with
$\wt{u}\vert_{\Sigma_{\beta}}=u$ will also be a solution of
$\Delta_{g_{n}} \wt{u}=e^{-2\varphi_{n}}\Delta_{g} \wt{u}=0$ on $M\setminus
\Sigma_{\beta}$ satisfying the same boundary condition. Then, it follows from Lemma \ref{lemma:DtNm0cusp}, Proposition
\ref{prop:limitDtNo}, and the fact that the metrics coincide in a neighborhood of the curve $\Sigma_{\beta}$
that the operators $R_{g_{n}}$ are the same for all $n$. Therefore,
${\det}^{*}R_{g_{n}}=c_{1}$, for all $n\in \N$. The same argument applied to the sequence $\{\wt{g}_{n}\}_{n\in \N}$
gives ${\det}^{*} R_{\wt{g}_{n}} = c_{2}$, for all $n\in \N$. In this way, we obtain:
\begin{multline*}
\log \det \Delta_{(\widetilde{M},\widetilde{g}_{n})} - \log
\det(\Delta_{g_{n}},\Delta_{\beta,0}) - \log
\det\Delta_{(\wt{M}\setminus M_{\beta},\widetilde{g}_{n}),D}\\ = \log
(A_{\wt{g}_{n} }(\wt{M})) -\log (A_{g_{n}}(M)) + c
\end{multline*}
where $c$ is a constant that does not depend on $n$.

Recall that
$A_{g_{n}}(M)$, $A_{\wt{g}_{n}}(\wt{M})$,
$\det(\Delta_{g_{n}},\Delta_{\beta,0})$ are constants independent of
$n$. Moreover, $\wt{g}_{n}\vert_{\wt{M}\setminus
M_{\beta}}=\sigma\vert_{\wt{M}\setminus M_{\beta}}$. Therefore
$\det\Delta_{(\wt{M}\setminus M_{\beta},\widetilde{g}_{n}),D}$ is
also constant. Thus,
\begin{equation*}
\log \det(\Delta_{\widetilde{g}_{n}})  = \text{constant}.
\end{equation*}
\end{proof}

\begin{lemma}
The heat invariants corresponding to the metrics of the sequence
$\{\wt{g}_{n}\}_{n\in \N}$ are the same for any $n\in \N$ if we
start with an isoresonant sequence $\{g_{n}\}_{n\in \N}$.
\label{lemma:eqhinvcorcompm}
\end{lemma}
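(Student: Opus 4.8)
Following the paper's convention I take $m=1$; the general case is identical, with one model kernel per cusp. The plan is to combine two facts: heat invariants are local, and, by Proposition \ref{prop:eqrhtisrm}, the relative heat trace is the same for all members of an isoresonant sequence. Recall that $\wt{g}_{n}=e^{2\varphi_{n}}\sigma$ with $\supp\varphi_{n}\subset K\subset M_{\beta}$ and $\sigma=g$ on $M_{\beta+1}$, so that $\wt{g}_{n}=g_{n}$ on $M_{\beta}$ while $\wt{g}_{n}=\sigma$ on $\wt{M}\setminus M_{\beta}$. For the closed surface $(\wt{M},\wt{g}_{n})$ one has $\tr(e^{-t\Delta_{\wt{g}_{n}}})\sim\sum_{j\ge 0}a_{j}(\wt{g}_{n})\,t^{j-1}$ with $a_{j}(\wt{g}_{n})=\int_{\wt{M}}u_{j}(z,\wt{g}_{n})\,dA_{\wt{g}_{n}}(z)$, where $u_{j}(\,\cdot\,,h)$ is the local heat-invariant density (a universal polynomial in the curvature of $h$ and its covariant derivatives at the point). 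Since $u_{j}$ is pointwise determined by the metric, splitting $\wt{M}=M_{\beta}\cup(\wt{M}\setminus M_{\beta})$ gives
\begin{equation*}
a_{j}(\wt{g}_{n})=\int_{M_{\beta}}u_{j}(z,g_{n})\,dA_{g_{n}}(z)+\int_{\wt{M}\setminus M_{\beta}}u_{j}(z,\sigma)\,dA_{\sigma}(z),
\end{equation*}
and the second term is independent of $n$. So it suffices to show that $c_{j}(n):=\int_{M_{\beta}}u_{j}(z,g_{n})\,dA_{g_{n}}$ is independent of $n$ for every $j$.

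To isolate $c_{j}(n)$, I would use the trace formula (\ref{eq:relheatteq22Mu2}) with the model operator $\bar{\Delta}_{\beta,0}$, which is admissible for $\beta$ large by Proposition \ref{prop:eqrhtisrm}. Since $\supp\varphi_{n}\subset K\subset M_{\beta}$ and $K\cap\Sigma_{\beta}=\emptyset$, every $g_{n}$ coincides with $g$ on a neighbourhood of $\overline{Z_{\beta}}$; moreover $p_{\beta}$ is supported in $Z_{\beta}$, is independent of $n$, and $dA_{g_{n}}=dA_{g}$ on $Z_{\beta}$. Hence
\begin{equation*}
\Phi_{n}(t):=\tr\big(e^{-t\Delta_{g_{n}}}-e^{-t\bar{\Delta}_{\beta,0}}\big)=\int_{M_{\beta}}K_{g_{n}}(z,z,t)\,dA_{g_{n}}(z)+\int_{Z_{\beta}}\big(K_{g_{n}}(z,z,t)-p_{\beta}(z,z,t)\big)\,dA_{g}(z).
\end{equation*}
Because $M_{\beta}$ is compact and $\Sigma_{\beta}$ is an interior hypersurface of $M$ --- the heat kernel of $\Delta_{g_{n}}$, as an operator on the boundaryless manifold $M$, is smooth across $\Sigma_{\beta}$, so no boundary layer appears --- the first integral has the standard uniform interior short-time expansion $\int_{M_{\beta}}K_{g_{n}}(z,z,t)\,dA_{g_{n}}\sim\sum_{j\ge 0}c_{j}(n)\,t^{j-1}$, with exactly the $c_{j}(n)$ above (the $u_{j}$ are the same universal densities as for a closed surface).

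Now fix indices $n$ and $m$. By Proposition \ref{prop:eqrhtisrm}, $\Phi_{n}(t)=\Phi_{m}(t)$, so the difference of the two $M_{\beta}$-integrals equals minus the difference of the two $Z_{\beta}$-integrals, in which the $n$-independent terms $p_{\beta}$ cancel, leaving $\int_{Z_{\beta}}\big(K_{g_{m}}(z,z,t)-K_{g_{n}}(z,z,t)\big)\,dA_{g}(z)$. Since $\Delta_{g_{n}}$ and $\Delta_{g_{m}}$ agree on a neighbourhood of $\overline{Z_{\beta}}$ and $d_{g}(z,K)\ge d_{g}(\Sigma_{\beta},K)>0$ for every $z\in Z_{\beta}$, the standard off-diagonal (finite-propagation-speed) heat-kernel estimate gives $|K_{g_{n}}(z,z,t)-K_{g_{m}}(z,z,t)|=O(t^{\infty})$ uniformly on $Z_{\beta}$; as $Z_{\beta}$ has finite $g$-area, it follows that $\int_{Z_{\beta}}\big(K_{g_{m}}-K_{g_{n}}\big)(z,z,t)\,dA_{g}=O(t^{\infty})$ as $t\to 0^{+}$. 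Hence $\sum_{j\ge 0}\big(c_{j}(n)-c_{j}(m)\big)t^{j-1}=O(t^{\infty})$, which forces $c_{j}(n)=c_{j}(m)$ for all $j$, and with the formula for $a_{j}(\wt{g}_{n})$ this proves the lemma. The only delicate point --- and the one I would treat most carefully --- is precisely the bookkeeping just used: checking that restricting the short-time expansion to the compact piece $M_{\beta}$ produces no spurious boundary contribution along $\Sigma_{\beta}$, and that the genuinely non-local part of (\ref{eq:relheatteq22Mu2}) (the scattering-phase integral, the constant $q$, the model subtraction) is either $n$-independent or $O(t^{\infty})$ in the difference, so that matching short-time coefficients isolates exactly the local integrals $c_{j}(n)$. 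The remaining ingredients are the locality of the heat invariants and the elementary off-diagonal heat-kernel bound.
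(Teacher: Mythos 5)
Your argument is correct in substance and follows the same overall strategy as the paper: isoresonance gives equality of the relative heat traces (Proposition \ref{prop:eqrhtisrm}), the cusp contribution is independent of $n$ because the metrics agree there, and locality of the heat coefficients transfers the resulting equality of short-time asymptotics on the compact part to the closed surfaces $(\wt{M},\wt{g}_{n})$. The implementation differs in one respect: the paper never compares $K_{g_{n}}$ and $K_{g_{m}}$ directly in the cusp. It instead builds a glued parametrix $H_{h}=\phi_{1}K_{1}\psi_{1}+\phi_{2}K_{2,\wt{h}}\psi_{2}$ out of the heat kernel $K_{1}$ of the full model cusp and the heat kernel $K_{2,\wt{h}}$ of the compactified surface, proves $\int_{M}\vert K_{h}-H_{h}\vert\,dA_{h}=O(e^{-c/t})$ (citing \cite{Aldana}), and thereby isolates $\int_{M_{\beta+1}}K_{2,\wt{h}}\,dA_{h}$ inside the relative heat trace directly; this also spares it your first bookkeeping step, since the quantity that appears is already the closed-surface kernel. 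Your route replaces this with the interior expansion of $\int_{M_{\beta}}K_{g_{n}}\,dA_{g_{n}}$ (which is fine: $\Sigma_{\beta}$ is an interior hypersurface, so no boundary layer) plus the assertion that $\int_{Z_{\beta}}(K_{g_{m}}-K_{g_{n}})(z,z,t)\,dA_{g}=O(t^{\infty})$. That assertion is true, but it is the one step you should not simply call standard: on $Z_{\beta}$ the injectivity radius tends to zero and $K_{g_{n}}(z,z,t)$ is unbounded in $z$ (indeed $\int_{Z_{\beta}}K_{g_{n}}(z,z,t)\,dA_{g}$ diverges, which is why $p_{\beta}$ must be subtracted), so the constants in the usual finite-propagation-speed or Cheeger--Gromov--Taylor bound blow up as one descends the cusp and a naive ``uniform $O(t^{\infty})$'' claim is not automatic. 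The estimate survives because $d_{g}(z,K)$ grows like $\log y$ while the local constants grow only polynomially in $y$, and the cusp has finite area; but this needs to be said, and it is precisely the difficulty that the paper's parametrix construction, with the explicit cusp kernel $K_{1}$ and the estimate from \cite{Aldana}, is designed to control. With that step justified, your proof is complete.
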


\begin{proof}
Let $h$ be any of the metrics $g_{n}$ that we are considering. Let us
start by constructing the kernel of a parametrix $H_{h}$ for the
heat operator $e^{-t\Delta_{h}}$ on the surface with cusps $(M,h)$, as it was done in
\cite[p.245]{Mu}. Namely we use
the standard method of gluing the heat kernel on the complete
hyperbolic cusp $(0,\infty)\times S^{1}$, denoted by $K_{1}$ and
independent of the choice of $h$, with the heat kernel on $(\wt{M}, \wt{h})$,
denoted by $K_{2,\wt{h}}$, restricted to $M_{\beta +2}$.
Let us recall briefly the definition of the gluing functions: For any two
constants $1<b<c$, let $\phi_{(b,c)}$ be such that $\phi_{(b,c)}(y,x) = 0$ for
$y\leq b$, and $\phi_{(b,c)}(y,x) = 1$ for $y\geq c$. Let $\psi_{1}
= \phi_{(\beta+{\frac{5}{4}}, \beta+2)}$, and $\psi_{2} = 1 -
\psi_{1}$; then $\{\psi_{1},\psi_{2}\}$ is a partition of unity on
$[\beta+1, \beta+2]\times S^{1}$. Let $\phi_{1} =
\phi_{(\beta,\beta+1)}$ and $\phi_{2} = 1 -
\phi_{(\beta+{\frac{5}{2}}, \beta+3)}$, so that $\phi_{i}=1$ on the
support of $\psi_{i}$, $i=1,2$. Then the function:
$$H_{h}(z,z',t) = \phi_{1}(z)K_{1}(z,z',t)\psi_{1}(z') +
\phi_{2}(z)K_{2,\wt{h}}(z,z',t)\psi_{2}(z').$$
is a parametrix, see \cite{Mu}. It is not difficult to prove that there exist constants
$C,c>0$ such that:
$$\int_{M} \vert K_{h}(z,z,t)-H_{h}(z,z,t) \vert dA_{h}(z) \leq C
e^{-{\frac{c}{t}}}$$
for $0<t\leq 1$, see for example \cite{Aldana}. Then for small $t$ we can replace the heat kernel
$K_{h}$ for the parametrix $H_{h}$. Let $p_{\beta+1}(z,z',t)$ be as in equation (\ref{eq:psuba}).
We can obtain the analog to equation ($8.14$) in \cite[p.283]{Mu},
exactly in the same way as it is done there, this is:
\begin{multline}\int_{M} (K_{h}(z,z,t) - p_{\beta+1}(z,z,t)) \ dA_{h}(z)
= \int_{Z_{\beta+1}} (K_{1}(z,z,t) - p_{\beta+1}(z,z,t)) \ dA_{h}(z)\\
+ \int_{M_{\beta+1}} K_{2,\wt{h}}(z,z,t) \ dA_{h}(z)
+O(e^{-{\frac{c}{t}}}), \text{ as } t\to 0. \label{eq:auxreltraces}
\end{multline}
For the convenience of the reader, we give some explicit steps of the proof of equation (\ref{eq:auxreltraces}).
In order to keep the notation simple, if there is no place to confusion, we drop the variable inside the integrals.
From the definition of the cutoff functions we have that
\begin{multline*}\int_{M} (K_{h} - p_{\beta+1}) \ dA_{h}
= \int_{Z_{\beta+1}} (K_{1}\psi_{1} - p_{\beta+1}) \ dA_{h}\\
+ \int_{M_{\beta+2}} K_{2,\wt{h}}\psi_{2} \ dA_{h} + O(e^{-{\frac{c}{t}}}), \text{ as } t\to 0.
\end{multline*}
On the other hand,
$$\int_{Z_{\beta+1}} (K_{1}\psi_{1} - p_{\beta+1}) \ dA_{h} =
\int_{Z_{\beta+1}} (K_{1} - p_{\beta+1}) \ dA_{h}
- \int_{[\beta+1,\beta+2]\times S^{1}} K_{1}\psi_{2}\ dA_{h}$$
Therefore,
\begin{multline*}\int_{M} (K_{h} - p_{\beta+1}) \ dA_{h}
= \int_{Z_{\beta+1}} (K_{1} - p_{\beta+1}) \ dA_{h} + \int_{M_{\beta+1}} K_{2,\wt{h}} \ dA_{h}\\
- \int_{[\beta+1,\beta+2]\times S^{1}} (K_{1}-K_{2,\wt{h}})\psi_{2}\ dA_{h} + O(e^{-{\frac{c}{t}}}), \text{ as } t\to 0.
\end{multline*}
Now, Proposition 3.24 in \cite{Mu} implies that the
coefficients of the asymptotic expansions of $K_{1}(z,z,t)$ and $K_{2}(z,z,t)$, as $t\to 0$, coincide on
$[\beta+1,\beta+2]\times S^{1}$. Then equation (\ref{eq:auxreltraces}) follows.

For a metric $\wt{g}_{n}$ on $\wt{M}$ the heat invariants are, by
definition, the coefficients in the asymptotic expansion of the
trace of the heat kernel as $t\to 0$:
$$\int_{\wt{M}} K_{2,\wt{g}_{n}}(z,z,t) \ dA_{\wt{g}_{n}}(z) \sim {\frac{1}{t}}
\sum_{j=0}^{\infty} a_{j}(\wt{g}_{n}) t^{j}, \ \ \ \ \mbox{ as }
t\to 0.$$ The goal of this lemma is to prove that $a_{j}(\wt{g}_{n})
= a_{j}(\wt{g}_{m})$ for any $n, m \in \N$, and for all $j\geq 0$.
This will follow from the equality of the asymptotic expansions for
small values of $t$ of the integrals
\begin{equation}\int_{\wt{M}} K_{2,\wt{g}_{n}}(z,z,t) \ dA_{\wt{g}_{n}}(z) \quad
\text{and} \quad
\int_{\wt{M}} K_{2,\wt{g}_{m}}(z,z,t) \
dA_{\wt{g}_{m}}(z)\label{eq:eqtrhkcomppart}
\end{equation} for any $n, m \in \N$. We can split the integral over
$\wt M$ as an integral over $M_{\beta+1}$ and one over $\wt{M}\setminus
M_{\beta+1}$.
Given two metrics $g_{n}$ and $g_{m}$ as in the statement of the
lemma, we have that on $\wt{M}\setminus M_{\beta+1}$,
$\wt{g}_{n}=\wt{g}_{m}$. Since relative to any coordinate system,
the coefficients of the asymptotic expansion of the heat kernel are
given by universal polynomials in terms of the metric tensor and its
covariant derivatives, we have that $a_{j}(z,\wt{g}_{n}) =
a_{j}(z,\wt{g}_{m})$, for $z\in \wt{M}\setminus M_{\beta+1}$. On
$\wt{M}\setminus M_{\beta+1}$ we have that
$dA_{\wt{g}_{n}}=dA_{\wt{g}_{m}}$. Therefore:
$$\int_{\wt{M}\setminus M_{\beta+1}} K_{2,\wt{g}_{n}}(z,z,t) \
dA_{\wt{g}_{n}}(z) = \int_{\wt{M}\setminus M_{\beta+1}}
K_{2,\wt{g}_{m}}(z,z,t) \ dA_{\wt{g}_{n}}(z).$$

By assumption, $K_{1}$ and $p_{\beta+1}$ are independent of
$g_{n}$ and $g_{m}$. Therefore, by equation (\ref{eq:auxreltraces}) we have:
\begin{multline*}
\int_{M_{\beta+1}} K_{2,\wt{g}_{n}} \ dA_{\wt{g}_{n}} -
\int_{M_{\beta+1}} K_{2,\wt{g}_{m}} \ dA_{\wt{g}_{m}} \\
\sim_{t\to 0}
 \int_{M} (K_{g_{n}} - p_{\beta+1}) \ dA_{g_{n}} -
\int_{Z_{\beta+1}} (K_{1} - p_{\beta+1}) \ dA_{g_{n}}\\
  -\int_{M} (K_{g_{m}} - p_{\beta+1}) \ dA_{g_{m}} +
\int_{Z_{\beta+1}} (K_{1} - p_{\beta+1}) \ dA_{g_{m}}\\
= \int_{M} (K_{g_{n}} - p_{\beta+1}) \ dA_{g_{n}} -
\int_{M} (K_{g_{m}} - p_{\beta+1}) \ dA_{g_{m}} = 0,
\end{multline*}
where the last equality follows from the fact that the metrics are
isoresonant and from Proposition \ref{prop:eqrhtisrm}. So, we have proved that the asymptotic
expansions as $t\to 0$ for the integrals in
(\ref{eq:eqtrhkcomppart}) are the same. From the definition of the
heat invariants it follows that:
$$a_{j}(\wt{g}_{n}) = a_{j}(\wt{g}_{m}), \ \ \ \ \ \mbox{ for all } j\geq 0, \
\mbox{ and } n, m\in \N.$$
\end{proof}

\section{Boundedness of the relative determinant as function on the Moduli space of hyperbolic surfaces with cusps}
\label{section:boundedness}

In this section we restrict to surfaces with cusps that are hyperbolic.
Let $(M,\tau)$ be a Riemann surface of genus $q$ with $m$ cusps,
where $\tau$ is a hyperbolic metric of constant negative unitary
curvature. To each element $[\tau] \in \M_{q,m}$ we associate the
relative determinant $\det(\Delta_{\tau},\bar{\Delta}_{1,0})$, where
the operator $\bar{\Delta}_{1,0}$ is given in Definition \ref{def:Laplmcuspas} with $a=1$
and it acts on a subspace of $\oplus_{j=1}^m L^2([1,\infty),y_j^{-2}dy_j)$.
If $(M,\tau)$ can be decomposed as $M = M_{0}\cup Z_{a_{1}} \cup
\cdots Z_{a_{m}}$, with $a_{j}\geq 1$; then the difference
$e^{-t\Delta_{\tau}}-e^{-\bar{\Delta}_{1,0}}$ is taken in the
extended $L^{2}$ space given by:
\begin{multline*}L^{2}(M,dA_{\tau})\oplus \oplus_{j=1}^{m}
L^{2}([1,a_{j}],y^{-2}dy)\\ = L^{2}(M_{0},dA_{\tau})\oplus
\oplus_{j=1}^{m} (L^{2}_{0}(Z_{a_{j}}) \oplus
L^{2}([1,\infty),y^{-2}dy)).\end{multline*}

Our result is:
\begin{theorem}
As function on the moduli space of hyperbolic surfaces of fixed genus $q$ with $m$ cusps, $\M_{q,m}$,
the relative determinant $\det(\Delta_{\tau},\bar{\Delta}_{1,0})$ tends to zero as $[\tau]$
approaches the boundary; where by boundary we mean the set
$\overline{{\M}_{q,m}}\setminus \M_{q,m}$.\label{theorem:vdbMs}
\end{theorem}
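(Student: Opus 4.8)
The plan is to reduce the statement to known asymptotics for the relative determinant under degeneration, as computed by Jorgenson and Lundelius \cite{JorLund1}, combined with the description of the boundary of $\M_{q,m}$ in terms of pinching geodesics provided by the Deligne--Mumford--Bers compactification \cite{Bers}. The first step is to recall that a point in $\overline{\M_{q,m}}\setminus\M_{q,m}$ is approached precisely by a family of hyperbolic surfaces $(M,\tau_n)$ in which some collection of simple closed geodesics has length $\ell_n\to 0$; by Mumford's compactness criterion (applied in the finite-area setting) no other mechanism can drive $[\tau_n]$ to the boundary. Hence it suffices to show that whenever $\ell(\tau_n)\to 0$ along a pinching family one has $\det(\Delta_{\tau_n},\bar\Delta_{1,0})\to 0$.

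Next I would invoke the factorization of the relative determinant into the ``hyperbolic'' part and the finitely many fixed model-operator corrections. Since $\bar\Delta_{1,0}$ is independent of $[\tau]$ and is built from the cusps starting at $y=1$, the relative heat trace $\tr(e^{-t\Delta_{\tau_n}}-e^{-t\bar\Delta_{1,0}})$ differs from Jorgenson--Lundelius's relative heat trace (their reference operator on the cusps) only by terms that are manifestly continuous and bounded away from $0$ and $\infty$ along the family, coming from the compact pieces $[1,a_j]\times S^1$ glued in. Differentiating the associated relative zeta functions at $s=0$ then shows that $\log\det(\Delta_{\tau_n},\bar\Delta_{1,0})$ equals the logarithm of the JL relative determinant plus a bounded error. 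So the theorem follows once one knows that the JL relative determinant tends to $0$ under pinching.

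The heart of the matter is therefore the asymptotic behavior of the relative determinant as $\ell_n\to 0$. For a single pinched geodesic of length $\ell$, the standard degeneration analysis (Wolpert's plumbing estimates, together with the small eigenvalue $\lambda_1(\tau_n)\sim c\,\ell_n$ that emerges from the collapsing collar) contributes a factor going to zero: concretely, the relative zeta-determinant picks up a term of the form $C\ell_n^{-1}$ or $\ell_n/(\text{const})$ in the analogue of Lemma \ref{Lemma:decmu1}, and the product over all pinched geodesics still tends to $0$. One can extract this directly from the results of \cite{JorLund1}: their analysis of the relative weight (minus the log of the relative determinant) of a degenerating family shows it diverges to $+\infty$, equivalently the relative determinant tends to $0$; Lundelius's earlier computation in \cite{Lundelius} of the same quantity for pinching families of finite-volume hyperbolic surfaces gives the explicit rate. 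I would cite these, noting that although neither source phrases the conclusion in terms of the moduli space, the limit is uniform enough to pass to $\overline{\M_{q,m}}$: any sequence $[\tau_n]\to\partial\M_{q,m}$ has, after passing to a subsequence, a definite set of pinching curves with lengths tending to $0$, and the bound depends only on those lengths.

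The main obstacle I anticipate is bookkeeping the cusp-versus-collar contributions carefully: one must make sure that the change of model operator (from the JL reference operator to $\bar\Delta_{1,0}$, both supported on the cusps but with different starting points) really does contribute only an $O(1)$ term to $\log\det$, uniformly along the degenerating family, and in particular does not itself blow up or vanish. This is plausible because the discrepancy is supported on fixed finite cylinders $[1,a_j]\times S^1$ away from the pinching region, so the relevant heat-trace difference is controlled by compactly supported data whose geometry is fixed along the family; but one has to verify that the cusp lengths $a_j$ can be taken in a fixed compact range (which holds since the cusps of a finite-area hyperbolic surface have a universal geometry) and that the splitting-type formula of Theorem \ref{theorem:splitformdets} can be applied with constants independent of $n$. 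Once that uniformity is in hand, the conclusion $\det(\Delta_{\tau},\bar\Delta_{1,0})\to 0$ as $[\tau]\to\partial\M_{q,m}$ is immediate.
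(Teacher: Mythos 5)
Your proposal follows essentially the same route as the paper: it characterizes approach to the boundary via Bers' compactness theorem as pinching of geodesics, compares $\det(\Delta_{\tau},\bar{\Delta}_{1,0})$ with the Jorgenson--Lundelius hyperbolic determinant up to a controlled correction (the paper makes this exact in Proposition \ref{prop:compadets}, where the correction is a constant depending only on the number of cusps, computed from the parabolic term in Selberg's trace formula), and then invokes the degeneration results of \cite{JorLund1} together with Wolpert's asymptotics for the length series and the collapse of small eigenvalues to conclude that the logarithm of the relative determinant diverges to $-\infty$. The argument is correct and matches the paper's proof in all essential steps.
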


This theorem implies that the relative determinant is bounded as a
function on the moduli space. In addition, it also implies that it is a proper function.

We use Selberg's trace formula and the work
of Bers in \cite{Bers} and of Jorgenson and Lundelius in
\cite{JorLund1}. In \cite{JorLund1} the authors define a hyperbolic determinant for
Laplacians on hyperbolic Riemann surfaces of finite volume, non-connected in general.
We compare both determinants and use their results together with the results in \cite{Bers} about degeneration of surfaces.

Let us start by recalling Selberg's trace formula \cite{Selberg} as
it is presented by H. Iwaniec in \cite{Iwaniec}, applied to the function
$h(r)=e^{-t({\frac{1}{4}}+r^{2})}$ and its Fourier transform
$g(u)={\frac{1}{\sqrt{4\pi t}}}e^{-{\frac{t}{4}}}
e^{-{\frac{u^{2}}{4t}}}$.

Let $\Gamma$ be a Fuchsian group of the first kind. Let $\Gamma
\setminus \H =M$ be the associated surface, let $\Delta_{\tau}$ be the
Laplacian on $M$ and let $\lambda_{j} = {\frac{1}{4}} + r_{j}^{2}$
be the sequence of eigenvalues of $\Delta_{\tau}$. We do not
include the contribution of the elliptic elements, because we
consider groups without elliptic elements. In this case Selberg's
trace formula (see \cite{Selberg} or \cite{Iwaniec}) applied to the heat operator takes the form:
\begin{multline}
\sum_{j} e^{-t({\frac{1}{4}}+r_{j}^{2})} - {\frac{1}{4\pi}}
\int_{\R} e^{-t({\frac{1}{4}}+\lambda^{2})}
{\frac{\phi'}{\phi}}({\frac{1}{2}}+ i\lambda) d\lambda +
{\frac{e^{-{\frac{t}{4}}}}{4}} \tr(C({\frac{1}{2}})) \\ =
{\frac{\mbox{Area}(M)}{4\pi}} \int_{\R}
e^{-t({\frac{1}{4}}+\lambda^{2})} \lambda \tanh(\pi \lambda)
d\lambda + {\frac{e^{-{\frac{t}{4}}}}{\sqrt{4\pi t}}}
\sum_{k=1}^{\infty} \sum_{{\{\gamma\}}_{\Gamma}}
{\frac{\ell(\gamma)}{2 \sinh{({\frac{k \ell(\gamma)}{2}})}}}
e^{-{\frac{(k\ell(\gamma))^{2}}{4t}}} \\   - {\frac{m}{\pi}}
\int_{\R} e^{-t({\frac{1}{4}}+\lambda^{2})}
{\frac{\Gamma'}{\Gamma}}(1+i\lambda) d\lambda + {\frac{m}{4}}
e^{-{\frac{t}{4}}} - m \log(2) {\frac{e^{-{\frac{t}{4}}}}{\sqrt{4\pi
t}}}, \quad \label{eq:Selbergstf}
\end{multline}
where the sum runs over the primitive hyperbolic conjugacy classes
$\gamma$ with length $\ell(\gamma)$, $m$ is the number of
inequivalent cusps, and as before $C(s)$ is the scattering matrix and $\phi(s)
= \det C(s)$.

In the notation of \cite{JorLund1} the hyperbolic heat trace $\text{HTr}K_{M}(t)$
and the
regularized trace $\text{STr}K_{M}(t)$ are given by:
\begin{align*}
\text{HTr K}_{M}(t) &= {\frac{e^{-{\frac{t}{4}}}}{\sqrt{16\pi t}}}
\sum_{k=1}^{\infty} \sum_{\{\gamma\} \Gamma}
{\frac{\ell(\gamma)}{\sinh{({\frac{k \ell(\gamma)}{2}})}}}
e^{-{\frac{(k\ell(\gamma))^{2}}{4t}}},\\
\text{STrK}_{M}(t) &= \text{HTr K}_{M}(t) + \mbox{Area}(M)
K_{\mathbb H}(t,0),\end{align*} where
$$K_{\mathbb H}(t,0)={\frac{1}{4\pi}} \int_{\R}
e^{-t({\frac{1}{4}}+\lambda^{2})} \lambda \tanh(\pi \lambda)
d\lambda.$$
With these expressions, they
define the hyperbolic zeta function and the hyperbolic determinant as
\begin{eqnarray*}
\zeta_{M, \text{hyp}}(s) &=& {\frac{1}{\Gamma(s)}} \int_{0}^{\infty}
(\text{STrK}_{M}(t)-d) t^{s-1} dt \quad \text{ and }\\
{\det}_{\text{hyp}} \Delta_{\tau} &:=& \exp(-\zeta'_{\text{hyp}}(0)),
\end{eqnarray*}
where $d$ is the number of connected components of $M$ as well as
the dimension of $\ker(\Delta_{\tau})$. Let $Z(s)$ be the Selberg zeta function associated to $M= \Gamma \setminus \H$, then there is
the following relation between the hyperbolic determinant and the derivative at $s=1$ of the Selberg zeta function:
$${\det}_{\text{hyp}} \Delta_{\tau} = Z'_{M}(1) e^{\chi(M) (-2\zeta_{R}'(-1)+ \frac{1}{4} - \frac{\log(2\pi)}{2})},$$
where $\zeta_{R}$ denotes the Riemann zeta function. This formula was proven for the hyperbolic determinant on Riemann
surfaces of finite volume by JL in
\cite{JorLund1}, as a generalization of the corresponding formula on compact Riemann surfaces given in
\cite{DhokerPhong} and \cite{Sarnak}.

We want to see the relation between the hyperbolic determinant
${\det}_{\text{hyp}} \Delta_{\tau}$ and the relative
determinant $(\Delta_{\tau},\bar{\Delta}_{1,0})$. In order to do
that we consider $P(t)$, the contribution of the
parabolic elements to the trace formula. We know that $P(t)$ is
given by
$$P(t)= \int_{\R} e^{-t({\frac{1}{4}}+r^{2})} {\frac{\Gamma'}{\Gamma}}(1+ir)\ dr,$$
for which we have the following lemma:
\begin{lemma}
$P(t)$ has the following asymptotic expansions:
$$P(t)\sim -{\frac{\pi}{2}}{\frac{\log(t)}{t}} + {\frac{\sqrt{\pi}}{2\sqrt{t}}}
(-B_{1} + \gamma -\log(4) + \pi) + t^{-1/2}\sum_{j=1}^{\infty} b_{j}
t^{j/2}, \quad \text{ as } t\to 0,$$ where $B_{1}$ is the first
Bernoulli number and $\gamma$ in this case denotes the Euler
constant. As $t\to \infty$, we have that
$P(t)=O(e^{-{\frac{t}{4}}})$. \label{lemma:asymexppart}
\end{lemma}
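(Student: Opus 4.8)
The plan is to reduce $P(t)$ to a single explicit one–dimensional integral and then read off its asymptotics. Writing $\psi=\Gamma'/\Gamma$, so that $P(t)=e^{-t/4}\int_{\R}e^{-tr^{2}}\psi(1+ir)\,dr$, I would use the Gauss integral representation
\[
\psi(1+z)=-\gamma+\int_{0}^{1}\frac{1-s^{z}}{1-s}\,ds ,\qquad \Re z>-1 ,
\]
with $z=ir$. Since $\psi(1+ir)$ grows only logarithmically in $|r|$ while $e^{-tr^{2}}$ decays like a Gaussian, a truncation-and-dominated-convergence argument justifies Fubini's theorem; the inner Gaussian integral $\int_{\R}e^{-tr^{2}}e^{ir\log s}\,dr=\sqrt{\pi/t}\,e^{-(\log s)^{2}/4t}$ is elementary, and after the substitution $s=e^{-v}$ one is left with
\[
P(t)=e^{-t/4}\sqrt{\tfrac{\pi}{t}}\,\bigl(J(t)-\gamma\bigr),\qquad J(t):=\int_{0}^{\infty}\frac{1-e^{-v^{2}/4t}}{e^{v}-1}\,dv .
\]
Thus everything is reduced to the behaviour of the integral $J(t)$ as $t\to0^{+}$.

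To expand $J(t)$ I would split the integral at $v=1$. On $[1,\infty)$ the factor $e^{-v^{2}/4t}$ is super-exponentially small uniformly, so that part equals $\int_{1}^{\infty}(e^{v}-1)^{-1}\,dv+O(e^{-c/t})$ for some $c>0$. On $[0,1]$ I would insert the convergent Bernoulli expansion $(e^{v}-1)^{-1}=v^{-1}+\sum_{n\ge1}\tfrac{B_{n}}{n!}v^{n-1}$ and integrate term by term. The term $v^{-1}$ produces, via the substitution $u=v^{2}/4t$ and the exponential-integral identity $\int_{0}^{X}\tfrac{1-e^{-u}}{u}\,du=\gamma+\log X+E_{1}(X)$ (with $E_{1}(X)=\int_{X}^{\infty}e^{-u}u^{-1}\,du$ and $X=1/4t$), the singular contribution $\tfrac12(\gamma-\log4-\log t)+O(e^{-c/t})$; this is the unique source of the logarithm. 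Each term with $n\ge1$ contributes $\tfrac{B_{n}}{n\cdot n!}-\tfrac{B_{n}}{n!}\int_{0}^{1}v^{n-1}e^{-v^{2}/4t}\,dv$, and the last integral has, by the same substitution together with the asymptotics of the incomplete gamma function, a full expansion in powers of $t^{1/2}$ with vanishing constant term. When these are added to $\int_{1}^{\infty}(e^{v}-1)^{-1}\,dv$, the surviving $O(1)$ pieces cancel, since $\sum_{n\ge1}\tfrac{B_{n}}{n\cdot n!}=\int_{0}^{1}\bigl(\tfrac{1}{e^{v}-1}-\tfrac1v\bigr)dv=\log(1-e^{-1})$ while $\int_{1}^{\infty}\tfrac{dv}{e^{v}-1}=-\log(1-e^{-1})$. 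This leaves an expansion $J(t)=-\tfrac12\log t+c_{0}+t^{1/2}\sum_{j\ge0}c_{j}t^{j/2}$ with explicitly computable constants $c_{0},c_{j}$.

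Finally I would multiply by $e^{-t/4}\sqrt{\pi/t}$ and expand $e^{-t/4}$ in its Taylor series; collecting the $t^{-1/2}\log t$ term, the $t^{-1/2}$ term, and the resulting series $t^{-1/2}\sum_{j\ge1}b_{j}t^{j/2}$ yields the asserted small-$t$ expansion, the numbers $b_{j}$ being determined by $c_{0}$, the $c_{j}$, and the Bernoulli numbers. The behaviour as $t\to\infty$ is immediate: splitting $\int_{\R}e^{-tr^{2}}|\psi(1+ir)|\,dr$ into $|r|\le1$ and $|r|>1$, and using that $\psi(1+ir)$ is bounded on $[-1,1]$ while $|\psi(1+ir)|=O(\log(2+|r|))$ for $|r|>1$, gives $\int_{\R}e^{-tr^{2}}|\psi(1+ir)|\,dr=O(t^{-1/2})$, hence $P(t)=e^{-t/4}O(t^{-1/2})=O(e^{-t/4})$.

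The main obstacle is the small-$t$ analysis of $J(t)$: one must isolate the single logarithm coming from the $v^{-1}$ term, verify that all remaining $O(1)$ contributions cancel as above, and check that every error term organises into the asymptotic series $t^{-1/2}\sum_{j\ge1}b_{j}t^{j/2}$ rather than contaminating the lower-order coefficients. A secondary technical point is the justification of the interchange of integration, which is handled by truncating in $r$, applying Fubini on the truncation, and controlling the tail using the logarithmic bound on $\psi(1+ir)$; alternatively one may appeal to the known evaluations of parabolic contributions in the Selberg trace formula literature, but the direct route above keeps the argument self-contained.
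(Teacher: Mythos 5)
Your argument is correct in outline but follows a genuinely different route from the paper's. The paper's proof is a two-line appeal to Binet's integral formula for $\Gamma'(z+1)/\Gamma(z+1)$ and to Stirling's series: one expands $\frac{\Gamma'}{\Gamma}(1+ir)$ asymptotically in $r$ (a $\log$ term plus a series in powers of $(1+ir)^{-1}$ with Bernoulli coefficients) and integrates each term against the Gaussian $e^{-tr^{2}}$, which produces the $t^{-1/2}\log t$ term and the half-integer powers of $t$ directly, with the coefficients read off from the Stirling data. You instead use Gauss's representation $\psi(1+z)=-\gamma+\int_{0}^{1}\frac{1-s^{z}}{1-s}\,ds$, interchange the integrals, and reduce everything to the single Laplace-type integral $J(t)=\int_{0}^{\infty}\frac{1-e^{-v^{2}/4t}}{e^{v}-1}\,dv$, whose small-$t$ behaviour you extract by splitting at $v=1$ and using the Bernoulli expansion of $(e^{v}-1)^{-1}$. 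Your route is more self-contained (and, incidentally, sidesteps the fact that the Binet formula as displayed in the paper is missing the factor $e^{-zu}$ in the integrand, without which that integral diverges); the paper's route is shorter and yields the $b_{j}$ as explicit Gaussian moments. The only points you should make explicit are that the rearrangement of $\sum_{n}\frac{B_{n}}{n!}\int_{0}^{1}v^{n-1}e^{-v^{2}/4t}\,dv$ into a single series in $t^{1/2}$ produces an asymptotic, not convergent, series (so one must truncate and bound the remainder), and the uniform bound $|\psi(1+ir)|=O(\log(2+|r|))$ used for Fubini; you flag both.

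One substantive remark: your computation gives the leading term $-\frac{\sqrt{\pi}}{2}\,t^{-1/2}\log t$, not the $-\frac{\pi}{2}\,t^{-1}\log t$ stated in the lemma, and your coefficient is the correct one. Since $\psi(1+ir)$ is smooth on $\R$ and grows only like $\log|r|$, the integral $\int_{\R}e^{-tr^{2}}\psi(1+ir)\,dr$ is $O(t^{-1/2}\log(1/t))$ and cannot contain a $t^{-1}\log t$ term; moreover every other term of the stated expansion is a half-integer power of $t$ beginning at $t^{-1/2}$, with which a $t^{-1}\log t$ leading term would be inconsistent. The stated leading term is therefore a typo in the paper. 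This does not affect the only use of the lemma, namely that $\xi(s)$ in Proposition \ref{prop:compadets} admits a meromorphic continuation regular at $s=0$, which depends only on the form of the expansion and not on the coefficients.
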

\begin{proof}
The proof of Lemma \ref{lemma:asymexppart} easily follows from the
formula
$${\frac{\Gamma'(z+1)}{\Gamma(z+1)}} = {\frac{1}{2z}} +
\log(z) - \int_{0}^{\infty} \left({\frac{1}{2}} -{\frac{1}{u}} +
{\frac{1}{e^{u}-1}}\right) du,$$ for $\Re(z)>0$, and from Stirling's
formula:
$$\log(\Gamma(z)) = (z-{\frac{1}{2}})\log(z) - z + {\frac{1}{2}} \log(2\pi)
+ \sum_{r=1}^{\infty} {\frac{(-1)^{r-1}B_{r}}{2r(2r-1)z^{2r-1}}},$$
for $\vert \arg(z) \vert \leq {\frac{\pi}{2}} - \theta$, where
$B_{r}$ is the $r$-th Bernoulli number.
\end{proof}

\begin{proposition} For the relative determinant and the hyperbolic determinant we have the following relation:
$$\det(\Delta_{\tau},\bar{\Delta}_{1,0}) = \tilde{A}\ {\det}_{\text{hyp}}(\Delta_{\tau}),$$
where $\tilde{A}$ is a constant that depends only on the number of cusps of
$M$. In particular,
$\det(\Delta_{\tau},\bar{\Delta}_{1,0}) = A \ Z'_{M}(1)$, where $A$ depends only on the topology of $M$.
\label{prop:compadets}
\end{proposition}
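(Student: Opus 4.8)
The plan is to compare, term by term, the two regularized heat traces that underlie the two determinants. Write $R(t):=\tr(e^{-t\Delta_\tau}-e^{-t\bar{\Delta}_{1,0}})$. First I would specialize M\"uller's relative heat trace identity (\ref{eq:relheatteq22Mu2}) to $a=(1,\dots,1)$, where the model operator is exactly $\bar{\Delta}_{1,0}$ and the term $\frac{e^{-t/4}}{\sqrt{4\pi t}}\sum_j\log a_j$ vanishes, so that
$$R(t)=\sum_k e^{-\lambda_k t}-\frac{1}{4\pi}\int_{-\infty}^{\infty}e^{-(1/4+\lambda^2)t}\frac{\phi'}{\phi}(\tfrac12+i\lambda)\,d\lambda+\frac{e^{-t/4}}{4}\bigl(\tr C_\tau(\tfrac12)+m\bigr).$$
Next I would read off Selberg's trace formula (\ref{eq:Selbergstf}): its left-hand side is precisely the first three summands above with $\tr C_\tau(\tfrac12)+m$ replaced by $\tr C_\tau(\tfrac12)$, while its right-hand side, after identifying the geometric sum with $\text{HTr}K_{M}(t)$, the $\tanh$-integral with $\mathrm{Area}(M)K_{\H}(t,0)$ and the $\Gamma'/\Gamma$-integral with $P(t)$, equals $\text{STrK}_{M}(t)-\frac{m}{\pi}P(t)+\frac{m}{4}e^{-t/4}-m\log 2\,\frac{e^{-t/4}}{\sqrt{4\pi t}}$. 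Combining the two identities gives the pointwise relation
$$R(t)=\text{STrK}_{M}(t)-\frac{m}{\pi}P(t)+\frac{m}{2}\,e^{-t/4}-m\log 2\,\frac{e^{-t/4}}{\sqrt{4\pi t}}.$$

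Since $\bar{\Delta}_{1,0}$ has trivial kernel, the shift $h=\dim\ker\Delta_\tau-\dim\ker\bar{\Delta}_{1,0}$ coincides with the number $d=\dim\ker\Delta_\tau$ subtracted in the hyperbolic zeta function. Hence, subtracting $h$ (resp. $d$) from the two sides and applying $\frac{1}{\Gamma(s)}\int_0^\infty(\cdot)\,t^{s-1}dt$,
$$\zeta(s;\Delta_\tau,\bar{\Delta}_{1,0})=\zeta_{M,\text{hyp}}(s)+\Phi(s),\qquad \Phi(s)=\frac{m}{\Gamma(s)}\int_0^\infty\Bigl(-\tfrac1\pi P(t)+\tfrac12 e^{-t/4}-\tfrac{\log 2}{\sqrt{4\pi t}}\,e^{-t/4}\Bigr)t^{s-1}\,dt.$$
Both zeta functions are holomorphic at $s=0$, hence so is $\Phi$, and $-\zeta'(0;\Delta_\tau,\bar{\Delta}_{1,0})=-\zeta'_{M,\text{hyp}}(0)-\Phi'(0)$, so that $\det(\Delta_\tau,\bar{\Delta}_{1,0})=e^{-\Phi'(0)}\,{\det}_{\text{hyp}}(\Delta_\tau)$. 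It then remains to check that $\Phi'(0)$ depends only on $m$: the $e^{-t/4}$ and $t^{-1/2}e^{-t/4}$ pieces have the elementary Mellin transforms $4^s$ and $\Gamma(s-\tfrac12)\,\Gamma(s)^{-1}4^{s-1/2}$, whose $s$-derivatives at $0$ are absolute constants, while for the $P(t)$ piece one splits $\int_0^\infty=\int_0^1+\int_1^\infty$, uses $P(t)=O(e^{-t/4})$ at infinity (so the tail is entire) and the small-$t$ expansion of Lemma~\ref{lemma:asymexppart} on $[0,1]$; after division by $\Gamma(s)$ the apparent singularities at $s=0$ (including the one coming from the $\log t/t$ term) cancel, and $\Phi'(0)$ comes out as a fixed combination of $m$, Euler's constant, $B_1$ and the universal coefficients $b_j$ of Lemma~\ref{lemma:asymexppart}, independent of the hyperbolic structure $\tau$. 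Setting $\tilde{A}:=e^{-\Phi'(0)}$ proves the first assertion.

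For the second assertion I would substitute the formula ${\det}_{\text{hyp}}\Delta_\tau=Z'_{M}(1)\,e^{\chi(M)(-2\zeta_{R}'(-1)+\frac14-\frac{\log(2\pi)}{2})}$ recalled above. Since $\chi(M)=2-2q-m$ is constant on $\M_{q,m}$, the exponential factor is a constant determined by the topology of $M$, and one obtains $\det(\Delta_\tau,\bar{\Delta}_{1,0})=A\,Z'_{M}(1)$ with $A=\tilde{A}\,e^{(2-2q-m)(-2\zeta_{R}'(-1)+\frac14-\frac{\log(2\pi)}{2})}$, which depends only on $q$ and $m$.

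I expect the only genuinely delicate point to be the analysis of $\Phi$ near $s=0$: verifying that the $\frac{\log t}{t}$ term and the constant (in $t$) term in the expansion of $P(t)$ from Lemma~\ref{lemma:asymexppart} do not obstruct holomorphy of $\Phi$ at $s=0$ after the division by $\Gamma(s)$, and doing the bookkeeping of exactly which expansion coefficients enter $\Phi'(0)$. Everything else is a direct comparison of the relative trace formula (\ref{eq:relheatteq22Mu2}) with Selberg's trace formula (\ref{eq:Selbergstf}).
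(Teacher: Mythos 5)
Your proposal is correct and follows essentially the same route as the paper: compare M\"uller's relative heat trace identity with Selberg's trace formula to obtain the pointwise relation (\ref{eq:difrtStritp}), pass to zeta functions, and show the correction term $\Phi=\xi$ is holomorphic at $s=0$ via Lemma \ref{lemma:asymexppart}, with the second assertion following from the ${\det}_{\text{hyp}}\Delta_\tau = Z'_M(1)e^{\chi(M)(\cdots)}$ formula. The only cosmetic difference is that you specialize the trace formula to $a=(1,\dots,1)$ directly, whereas the paper keeps general $a_j\geq 1$ and cancels the $\sum_j\log a_j$ term against the explicitly computed trace of $e^{-t\Delta_{a_j,0}}-e^{-t\Delta_{1,0}}$; both yield the same identity.
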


\begin{proof}
We know that for any $a> 1$ and $t>0$ the operator
$e^{-t\Delta_{a,0}}-e^{-t\Delta_{1,0}}$ acting on
$L^{2}([1,\infty),y^{-2}dy)$ is trace class and the trace is given
by
$$\tr(e^{-t\Delta_{a,0}}-e^{-t\Delta_{1,0}}) = -{\frac{1}{\sqrt{4\pi t}}} \ e^{-t/4} \log(a),$$
see \cite[Prop. 2.6]{Aldana}.
This fact together with equation (\ref{eq:relheatteq22Mu2}), (\cite[eq.(2.2)]{Mu2}), and the linearity of the trace
imply that
$$\tr(e^{-t\Delta_{\tau}} - e^{-t\bar{\Delta}_{1,0}}) = \sum_{j}
e^{-t\lambda_{j}} - {\frac{1}{4\pi}}\int_{\R}
e^{-t({\frac{1}{4}}+r^{2})} {\frac{\phi'}{\phi}}({\frac{1}{2}} + ir)
\ dr + {\frac{e^{-{\frac{t}{4}}}}{4}} (\tr(C({\frac{1}{2}}))+m).
$$

Putting this equation together with Selberg's trace formula we obtain:
\begin{align}
\tr(e^{-t\Delta_{\tau}} - e^{-t\bar{\Delta}_{1,0}}) -
\text{STrK}_{M}(t) = -{\frac{m}{\pi}} P(t) - {\frac{m
\log(2)}{\sqrt{4 \pi t}}}e^{-{\frac{t}{4}}}  + {\frac{m}{2}}
e^{-{\frac{t}{4}}}.\label{eq:difrtStritp}
\end{align}

Let us consider the following auxiliary function:
\begin{equation}\xi(s)={\frac{m}{\Gamma (s)}} \int_{0}^{\infty}
\left\{ -{\frac{1}{\pi}} P(t) + e^{-{\frac{t}{4}}}
\left({\frac{1}{2}} -{\frac{\log(2)}{\sqrt{4 \pi t}}}\right)\right\}
t^{s-1} dt. \label{eq:xiauxfcomrdJLd} \end{equation}
Then we have that $\zeta(s;\Delta_{\tau},\bar{\Delta}_{1,0}) =
\zeta_{M,\text{hyp}}(s) + \xi(s)$. On the other hand,
Lemma \ref{lemma:asymexppart} implies that the function $\xi(s)$ has a
meromorphic continuation to $\C$ that is analytic at $s=0$. Thus,
$$\det(\Delta_{\tau},\bar{\Delta}_{1,0}) = e^{-\xi'(0)} {\det}_{\text{hyp}}(\Delta_{\tau}).$$
The constant $\tilde{A}=e^{-\xi'(0)}$ depends only on the number of cusps of $M$.
\end{proof}

Let us now recall how one can approach the boundary of
the moduli space. For this we refer to L. Bers in \cite{Bers}. Let us recall the notation and the result
in \cite{Bers} that we use here. Let
$G=\SL(2,\R)/\{\pm I\}$. Every Fuchsian group $\Gamma$ satisfying
the condition $\text{mes}(G/\Gamma)<\infty$, has a signature
$\sigma=(p,n;\nu_{1},\cdots,\nu_{n})$, where $p$ and $n$ are
integers, the $\nu_{j}$ are integers or the symbol $\infty$, and
$p\geq 0$, $n\geq 0$, $2\leq \nu_{1}\leq \cdots \leq \nu_{n} \leq
\infty$. In the quotient $\Gamma \backslash {\mathbb H}$, the number $p$ corres\-ponds to the genus and $n$
corresponds to the number of \lq\lq singular\rq\rq \ points. The values $\nu_{j}<\infty$ correspond to elliptic points,
and $\nu_{j} = \infty$ correspond to cusps. Since we do not consider elliptic points, all $\nu_{j}$ are equal to infinity. Let
$$X(\sigma) = \{[\Gamma]: [\Gamma] \text{ is a conjugacy class of Fuchsian groups } \Gamma
\text{ with signature } \sigma\}$$
The spaces $X(\sigma)$, with their natural topologies, are metrizable. The topology of $X(\sigma)$ can be derived
from the Teichm\"uller topology. The theorem that is of our interest is the following:

\begin{theorem}$($L. Bers \cite{Bers}$)$
The subset of $X(\sigma)$ corresponding to groups $\Gamma$ such that
$\ell(\gamma)\geq 2+\epsilon >2$ for all hyperbolic $\gamma \in
\Gamma$ is compact.
\end{theorem}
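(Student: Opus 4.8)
The plan is to read this as a compactness statement for complete hyperbolic surfaces whose systole is bounded below by the fixed constant $2+\epsilon$, and to prove it by combining the thick--thin decomposition with a bounded--geometry compactness argument. Throughout, the signature is $\sigma=(p,n;\infty,\dots,\infty)$, so each $[\Gamma]\in X(\sigma)$ corresponds to a complete hyperbolic surface $M_\Gamma=\Gamma\backslash\H$ of genus $p$ with exactly $n$ cusps and no elliptic points, and hyperbolic $\gamma\in\Gamma$ correspond to closed geodesics of length $\ell(\gamma)$. Since $X(\sigma)$ is metrizable, it suffices to prove sequential compactness of the subset $X_\epsilon(\sigma)\subset X(\sigma)$ defined by $\ell(\gamma)\geq 2+\epsilon$ for all hyperbolic $\gamma$. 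First I would record the one uniform a priori bound: by Gauss--Bonnet the area is fixed, $\vol(M_\Gamma)=-2\pi\chi(M)=2\pi(2p-2+n)$, independently of $[\Gamma]$.

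Next I would invoke the Margulis lemma and the resulting thick--thin decomposition $M_\Gamma=M_{\mathrm{thick}}\cup M_{\mathrm{thin}}$, in which $M_{\mathrm{thin}}$ is a disjoint union of cusp neighbourhoods and collars around simple closed geodesics shorter than a universal Margulis constant $\mu_0$. The entire force of the hypothesis is that, once $2+\epsilon$ is compared with $\mu_0$, the systole bound leaves no room for short geodesics: the thin part contains no collars and consists of exactly the $n$ standard cusp ends, each isometric to a fixed horoball quotient and hence geometrically rigid. On the complement the injectivity radius is bounded below, $\inj_{M_\Gamma}\geq\delta(\epsilon)>0$ away from the cusps. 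The curvature is identically $-1$, so all covariant derivatives of the curvature vanish and the only relevant invariants are the area upper bound and this injectivity lower bound; a Cheeger--Gromov type compactness theorem then applies. Concretely, for a sequence $[\Gamma_k]$ one fixes a base frame in the thick part, covers the thick part by a uniformly bounded number of balls of radius $\delta/2$ on which the geometry is standard, and extracts a subsequence along which the pointed surfaces converge smoothly and the rigid cusp ends match up; equivalently, one extracts a convergent subsequence of the normalised generators in $\SL(2,\R)/\{\pm I\}$.

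The main obstacle is to upgrade this pointed, smooth geometric convergence to convergence in the Teichm\"uller topology on $X(\sigma)$ and to check that the limit again lies in $X_\epsilon(\sigma)$. The limiting group $\Gamma_\infty$ stays discrete and torsion--free because no collapsing can occur while the injectivity radius is bounded below, which is exactly what rules out a hyperbolic element degenerating into a parabolic or into the identity; the number $n$ of cusps is a topological invariant preserved under the convergence, and since no curve is pinched no extra cusp is created, so the signature is preserved. Finally each length function $[\Gamma]\mapsto\ell(\gamma)$ is continuous, so the closed condition $\ell(\gamma)\geq 2+\epsilon$ passes to the limit and $[\Gamma_\infty]\in X_\epsilon(\sigma)$. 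This yields sequential compactness, hence compactness.

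An alternative route, closer to Bers' own viewpoint, is through Fenchel--Nielsen coordinates. A surface of signature $\sigma$ admits a pants decomposition along $3p-3+n$ simple closed geodesics with length and twist coordinates $(\ell_i,\tau_i)$; the systole hypothesis gives the lower bounds $\ell_i\geq 2+\epsilon$, while the matching upper bounds $\ell_i\leq L(p,n)$ come from choosing, for each surface, a pants decomposition whose cutting curves have uniformly bounded length. The lengths then lie in a compact set and the twists contribute nothing after quotienting by the action of Dehn twists (that is, working modulo $\ell_i\Z$), so the induced point in moduli space stays in a compact set. In this approach the hard step is precisely the uniform upper bound on the pants--curve lengths, which is where the genuine geometric content of the theorem resides.
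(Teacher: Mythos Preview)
The paper does not prove this theorem; it is quoted verbatim from Bers' paper \cite{Bers} and used as a black box to justify that approaching the boundary of moduli space forces some geodesic length to tend to zero. There is therefore no proof in the paper to compare against.

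As for your sketch on its own merits: the overall strategy is sound and is one of the standard modern routes to Mumford--Bers compactness. A few points deserve tightening. First, the comparison of $2+\epsilon$ with the Margulis constant is harmless here since the two-dimensional Margulis constant is well below $2$, but you should say this rather than leave it implicit. Second, the passage from pointed $C^\infty$ convergence to convergence in the topology of $X(\sigma)$ is the genuinely delicate step you flag; you need that the limit group is again finitely generated of the same signature and that the convergence of generators can be arranged after conjugation, which is essentially Chabauty convergence of lattices plus the Kazhdan--Margulis/J{\o}rgensen inequality. Your one-line ``equivalently, one extracts a convergent subsequence of the normalised generators'' is in fact the heart of Bers' original argument, and deserves more than a parenthesis. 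Third, in the Fenchel--Nielsen alternative the existence of a pants decomposition with uniformly bounded cuff lengths (the ``Bers constant'') is itself a nontrivial theorem; invoking it here is legitimate but you should cite it rather than present it as routine.
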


This implies that the only possible deformations reaching the boundary of the moduli space
are obtained by deforming hyperbolic elements in the group, i.e. by pinching smallest geodesics.

As we already mentioned, the proof of Theorem \ref{theorem:vdbMs} relies strongly on the results of Jorgenson and Lundelius
in \cite{JorLund1}. Let us recall them: Let
$\{M_{l}\}_{l\in I\subset \R_{+}^{p}}$ be a degenerating family of
hyperbolic Riemann surfaces of finite volume (each surface $M_{l}$
is assumed to have $m$ cusps and to be connected) with $p$ pinching geodesics. This
means that for each $l=(l_{1},\cdots, l_{p})\in I$ the cutoff
cylinders $C_{l_{k},\epsilon}$ are embedded in $M_{l}$ for every
$0<\epsilon <1/2$.
From Gauss-Bonnet we know that the area of the surfaces is kept
invariant during the deformation. Let $\Gamma_{l}$ be the group corresponding to $M_{l}$,
let $\text{H}(\Gamma_{l})$ denote a set of representatives of
primitive non-conjugated hyperbolic classes in $\Gamma_{l}$, and let
$\text{DH}(\Gamma_{l})\subset \text{H}(\Gamma_l)$ be the subset corresponding to the geodesics that we are pinching.
Proposition 2.1 in \cite{JorLund1} yields that the degenerating heat
trace for $t>0$ equals:
$$\text{DTr} K_{M_{l}}(t)= {\frac{e^{-t/4}}{\sqrt{16 \pi t}}} \sum_{\text{DH}(\Gamma_{l})}
\sum_{n=1}^{\infty} {\frac{\ell(\gamma)}{\sinh(n\ell(\gamma)/2)}}
e^{-(n\ell(\gamma))^{2}/4t}.$$

Let $M$ be the Riemann surface that is the limit of the degenerating
family $\{M_l\}$ then $M$ is not necessarily connected and the
number of cusps of $M$ is $m+2p$. Theorem 2.2 in \cite{JorLund1}
states that:
$$\lim_{l\to 0}(\text{HTr} K_{M_{l}}(t) - \text{DTr} K_{M_{l}}(t)) = \text{HTr} K_{M}(t).$$

Their next step is to separate (in the trace) the small eigenvalues
of the Laplacian on $M_{l}$.
Let $\{\lambda_{n,l}\}_{n}$ denote the eigenvalues of $\Delta_{M_{l}}$ and $\{\lambda_{j}\}_{j}$
denote the eigenvalues of $\Delta_{M}$.
Let $0<\alpha< 1/4$ be such
that $\alpha$ is not an eigenvalue of the Laplacian on $M$ and consider:
$$\text{HTr} K_{M_{l}}^{\alpha}(t) := \text{HTr} K_{M_{l}}(t) - \sum_{\lambda_{n,l}\leq \alpha} e^{-\lambda_{n,l}t},$$
From this definition we have that:
$\text{STr} K_{M_{l}}^{\alpha}(t) = \text{STr} K_{M_{l}}(t) - \sum_{\lambda_{j,l}\leq \alpha}
e^{-t\lambda_{j,l}},$ and $\text{STr} K_{M}^{\alpha}(t) = \text{HTr} K_{M}(t) -\sum_{\lambda_{j}(M)\leq \alpha}
e^{-t\lambda_{j}(M)} + A K_{\mathbb H}(t,0),$
where $A$ denotes the area of the limit surface $M$.
For a given hyperbolic surface $M_{*}$, JL consider
the truncated hyperbolic zeta function:
$$\zeta_{\text{hyp}\ M_*}^{\alpha}(s) = {\frac{1}{\Gamma(s)}} \int_{0}^{\infty} \text{STr} K_{M_*}^{\alpha}(t) t^{s-1}\ dt$$
and the corresponding determinant ${\det}_{\text{hyp}}^{\alpha} \Delta_{M_*}$ is defined in the usual way.
Let us see
now how ${\det}_{\text{hyp}}^{\alpha} \Delta_{M_l}$ relates to
$\det(\Delta_{M_l},\bar{\Delta}_{1,0})$. Notice that the operator
$\bar{\Delta}_{1,0}$ remains constant through the degeneration.
At the moment we are
not concerned with the relative determinant of the limiting surface
but rather with the behavior
of the relative determinant of the degenerating surfaces. Equation
(\ref{eq:difrtStritp}) applied to $M_{l}$ can be rewritten as:
$$\tr(e^{-t\Delta_{M_l}} - e^{-t\bar{\Delta}_{1,0}}) - \text{STr} K_{M_l}^{\alpha}(t)
= m \left({-\frac{1}{\pi}} P(t) +  ({\frac{1}{2}} -
{\frac{\log(2)}{\sqrt{4\pi t}}}) e^{-{\frac{t}{4}}} \right)
+\sum_{\lambda_{j,l}\leq \alpha} e^{-t\lambda_{j,l}}$$ Writing this
in terms of zeta functions we obtain:
$$\zeta(s,\Delta_{M_l},\bar{\Delta}_{1,0}) - \zeta_{\text{hyp}\
M_l}^{\alpha}(s) = \xi(s) +\sum_{\lambda_{j,l}\leq \alpha}
\lambda_{j,l}^{-s},$$ where $\xi(s)$ is as in equation
(\ref{eq:xiauxfcomrdJLd}). Taking the meromorphic continuations and
differentiating we obtain that:
\begin{equation}
\log {\det}_{\text{hyp}}^{\alpha} \Delta_{M_l} =
\log\det(\Delta_{M_l},\bar{\Delta}_{1,0}) + mc -
\sum_{\lambda_{j,l}\leq \alpha} \log(\lambda_{j,l}),
\label{eq:relrdlhadsev}
\end{equation}
where for $\xi(s)$ we used again Lemma \ref{lemma:asymexppart} and
the fact that from equation (\ref{eq:xiauxfcomrdJLd}) is clear that $\xi'(0)
= c\ m$, where $c$ is a constant independent of $l$.

Due to a missprint in a sign in Corollary $4.3$ in \cite{JorLund1} we do not use it directly. Instead we
refer to their Theorem $4.1$ and keep track
of the signs. Theorem $4.1$ in \cite{JorLund1} establishes that for all $s\in \C$
\begin{equation}\lim_{l\to 0} \big(\zeta_{\text{hyp}\ M_l}^{\alpha}(s) -
{\frac{1}{\Gamma(s)}} \int_{0}^{\infty} \text{DTr}
K_{M_{l}}(t)t^{s-1} dt -\zeta_{\text{hyp}\ M}^{\alpha}(s)\big) = 0,
\label{eq:limlt0zaMlDpZlM}\end{equation}
and the convergence is uniform in any half plane $\Re(s)>C>-\infty$.
In order to deal with the second term in the left-hand side of
equation (\ref{eq:limlt0zaMlDpZlM}) we follow Remark $4.2$ in
\cite{JorLund1} to obtain:
\begin{equation*}\left.{\frac{d}{ds}} {\frac{1}{\Gamma(s)}} \int_{0}^{\infty} \text{DTr} K_{M_{l}}(t)t^{s-1} dt
\right\vert_{s=0} = \sum_{\text{DH}(\Gamma_{l})}
\sum_{n=1}^{\infty}
{\frac{e^{-n\ell(\gamma)}}{n(1-e^{-n\ell(\gamma)})}},
\end{equation*}
This together with equation (\ref{eq:limlt0zaMlDpZlM}) gives:
$$\lim_{l\to 0} \big( \log {\det}_{\text{hyp}}^{\alpha}\Delta_{M_l} +
\sum_{\gamma \in \text{DH}(\Gamma_l)} \sum_{n=1}^{\infty}
{\frac{e^{-n\ell(\gamma)}}{n(1-e^{-n\ell(\gamma)})}} \big) = \log
{\det}_{\text{hyp}}^{\alpha} \Delta_{M}.$$ Let us replace $\log
{\det}_{\text{hyp}}^{\alpha}\Delta_{M_l}$ in the expression above using
equation (\ref{eq:relrdlhadsev}):

\begin{multline}\lim_{l\to 0} \big( \log\det(\Delta_{M_l},\bar{\Delta}_{1,0})
+ mc+ \sum_{\gamma \in \text{DH}(\Gamma_l)} \sum_{n=1}^{\infty}
{\frac{e^{-n\ell(\gamma)}}{n(1-e^{-n\ell(\gamma)})}} -
\sum_{0<\lambda_{j,l} \leq \alpha} \log(\lambda_{j,l})\big)\\
= \log {\det}_{\text{hyp}}^{\alpha}
\Delta_{M}.\label{eq:limlt0logdets1}\end{multline}

In order to study the behavior of $\log
\det(\Delta_{M_l},\bar{\Delta}_{1,0})$ we need to know the behavior
of the series in the left-hand side of equation
(\ref{eq:limlt0logdets1}) as $l\to 0$; recall that $\ell(\gamma)\to
0$ as $l\to 0$. This series was already studied by S.A. Wolpert in \cite[p.308]{Wolpert}, if $\Re(s)>0$ then as $\ell(\gamma)\to 0^{+}$
we have:
$$\sum_{n=1}^{\infty}
{\frac{e^{-ns\ell(\gamma)}}{n(1-e^{-n\ell(\gamma)})}} =
\left({\frac{\pi^2}{6\ell(\gamma)}} + (s-{\frac{1}{2}})\log(1-e^{-s
\ell(\gamma)}) \right) + O(1).$$ Taking
$s=1$ we see that
$$\lim_{l\to 0}\sum_{\gamma \in DH(\Gamma_l)} \sum_{n=1}^{\infty}
{\frac{e^{-n\ell(\gamma)}}{n(1-e^{-n\ell(\gamma)})}} = \infty.$$

For the sum involving the logarithm of the small eigenvalues we know that
some of the small eigenvalues of the family $\{M_{l}\}$
may degenerate. For the eigenvalues of $M$, $0=\lambda_{j}(M)$, that
come from degeneration we know that for any
$0<\alpha<{\frac{1}{4}}$, $\alpha$ not an eigenvalue of $M$, there
is a $l_{0}$ such that for all $0<l\leq l_{0}$, $\lambda_{l,j}\leq
\alpha$. This is due to the convergence of any finite number of
eigenvalues. Thus $\lim_{l\to 0} \sum_{0<\lambda_{j,l} \leq \alpha}
\log(\lambda_{j,l}) = -\infty$. In this way we have:
$$\lim_{l\to 0} \sum_{\gamma \in \text{DH}(\Gamma_l)}
\sum_{n=1}^{\infty}
{\frac{-e^{-n\ell(\gamma)}}{n(1-e^{-n\ell(\gamma)})}}-
\sum_{0<\lambda_{j,l} \leq \alpha} \log(\lambda_{j,l}) = \infty,$$
\noindent since the term $c m$
and the hyperbolic $\alpha$-regularized determinant of the limit surface are both finite, it
follows that
$$\lim_{l\to 0} \log({\det}(\Delta_{M_l},\bar{\Delta}_{1,0})) = -\infty.$$
This finishes the proof of Theorem \ref{theorem:vdbMs}.

\end{document}